\newtheorem{theorem}{Theorem}[section]
\newtheorem{proposition}[theorem]{Proposition}
\newtheorem{lemma}[theorem]{Lemma}
\newtheorem{definition}[theorem]{Definition}
\newtheorem{corollary}[theorem]{Corollary}
\newtheorem{remark}[theorem]{Remark}
\newtheorem*{question}{Question}
\newtheorem*{notation}{Notation}
\NewDocumentCommand{\evalat}{sO{\big}mm}{%
  \IfBooleanTF{#1}
   {\mleft. #3 \mright|_{#4}}
   {#3#2|_{#4}}%
}
\newcommand{\colim@}[2]{%
  \vtop{\m@th\ialign{##\cr
    \hfil$#1\operator@font colim$\hfil\cr
    \noalign{\nointerlineskip\kern1.5\ex@}#2\cr
    \noalign{\nointerlineskip\kern-\ex@}\cr}}%
}
\newcommand{\colim}{%
  \mathop{\mathpalette\colim@{\rightarrowfill@\textstyle}}\nmlimits@
}
\newcommand*{\rom}[1]{\expandafter\@slowromancap\romannumeral #1@}
\begin{document}
\title{The Existence of Infinitely Many Geometrically Distinct Non-constant Prime Closed Geodesics on Riemannian Manifolds}

\author{Sergio Charles}

\date{10 August 2018}

\maketitle

\begin{abstract}
We enumerate a necessary condition for the existence of infinitely many geometrically distinct, non-constant, prime closed geodesics on an arbitrary closed Riemannian manifold $M$. That is, we show that any Riemannian metric on $M$ admits infinitely many prime closed geodesics such that the energy functional $E:\Lambda M\to\mathbb{R}$ has infinitely many non-degenerate critical points on the free loop space $\Lambda M$ of Sobolev class $H^1=W^{1,2}$. This result is obtained by invoking a handle decomposition of free loop space and using methods of cellular homology to study its topological invariants.
\end{abstract}

\section{Introduction and Preliminaries\label{sec:rathomotopy}}
Let $M$ be a multiply connected Riemannian manifold. Then the conjugacy classes in the fundamental group $\pi_1(M)$ may be mapped bijectively onto the free homotopy classes of closed loops on $M$. Closed geodesics are generated by undergoing an energy minimizing procedure. Thus, every closed multiply connected Riemannian manifold $M$ carries a closed geodesic. Likewise, every closed simply connected Riemannian manifold $M$ carries a closed geodesic according to Lyusternik and Fet \cite{lyusternik-fet}. 
The existence of at least one non-trivial, non-constant, closed geodesic on a closed multiply connected Riemannian manifold follows from properties of the gradient flow of the energy functional. 
\begin{theorem}\label{Cartan} Assume $M$ is closed and $\pi_1(M)$ is non-trivial. There is a closed geodesic in each non-trivial free homotopy class or in each non-trivial conjugacy class of the fundamental group $\pi_1(M)$.
\end{theorem}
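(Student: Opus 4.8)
The plan is to prove the statement by the direct method of the calculus of variations, minimizing the energy functional within a fixed free homotopy class. Fix a non-trivial conjugacy class $c$ in $\pi_1(M)$ and let $\Lambda_c M \subset \Lambda M$ denote the corresponding path-component of the free loop space; since $C^0$-close loops are freely homotopic, $\Lambda_c M$ is open and closed in $\Lambda M$, and $E$ restricted to it is again a smooth functional whose critical points are precisely the closed geodesics lying in the class $c$. Set $\kappa = \inf_{\gamma \in \Lambda_c M} E(\gamma)$. The first step is to show $0 < \kappa < \infty$: finiteness is immediate from any smooth representative, while positivity uses compactness of $M$. Indeed, by Cauchy--Schwarz one has $L(\gamma)^2 \le 2E(\gamma)$ for the length $L$, so if $\kappa = 0$ there would be a loop in the class $c$ of length smaller than the injectivity radius of the compact manifold $M$, hence contained in a geodesically convex ball and therefore null-homotopic, contradicting the non-triviality of $c$.

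Next I would take a minimizing sequence $\gamma_n \in \Lambda_c M$ with $E(\gamma_n) \to \kappa$. Because $M$ is compact (embed it isometrically into some $\mathbb{R}^N$, or argue intrinsically chart by chart) and $E(\gamma_n)$ is bounded, the sequence is bounded in $H^1(S^1,M)$. By the Rellich--Kondrachov theorem the inclusion $H^1(S^1,\cdot)\hookrightarrow C^0(S^1,\cdot)$ is compact, so after passing to a subsequence $\gamma_n \to \gamma$ uniformly and $\gamma_n \rightharpoonup \gamma$ weakly in $H^1$ for some $\gamma \in \Lambda M$. Uniform convergence keeps $\gamma$ in the free homotopy class $c$, so $\gamma \in \Lambda_c M$. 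Weak lower semicontinuity of $\gamma \mapsto \int_0^1 |\dot\gamma|^2\,dt$ then gives $E(\gamma) \le \liminf_n E(\gamma_n) = \kappa$, whence $E(\gamma) = \kappa$ and $\gamma$ minimizes $E$ on $\Lambda_c M$.

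It remains to identify $\gamma$ as a smooth non-constant closed geodesic. Being a minimizer it is a critical point of $E$ on $\Lambda M$, so the first-variation formula shows it is a weak solution of the geodesic equation $\nabla_{\dot\gamma}\dot\gamma = 0$; a standard elliptic/ODE bootstrap upgrades the a priori $H^1$ regularity to $C^\infty$, so $\gamma$ is an honest closed geodesic parametrized proportionally to arc length. It is non-constant because $E(\gamma) = \kappa > 0$. Running the same argument over every non-trivial conjugacy class (equivalently, every non-trivial free homotopy class, via the standard bijection with conjugacy classes of $\pi_1(M)$) yields the theorem.

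The only genuinely delicate points — and hence where I expect the main obstacle to lie — are the compactness input and the positivity of $\kappa$. The former is the tension between $C^0$-precompactness of a minimizing sequence and the merely \emph{weak} $H^1$-lower semicontinuity of $E$: this is exactly where closedness of $M$ is used, and it can equivalently be packaged as the Palais--Smale condition (C) for $E$ on each component of $\Lambda M$. The latter is what prevents the minimizer from degenerating to a constant loop, and it is where the injectivity radius of the compact metric enters. Both facts are classical, but they carry all of the geometry in the argument.
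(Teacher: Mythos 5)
The paper does not actually give a proof of this theorem; it states it as classical background, with two informal remarks that gesture at the argument: before the statement, ``closed geodesics are generated by undergoing an energy minimizing procedure,'' and afterwards, that the existence of a non-constant closed geodesic in the multiply connected case ``follows from properties of the gradient flow of the energy functional.'' Your proposal fills in precisely the first of these hints with a complete and correct direct-method argument, and it is the standard proof of this theorem: decompose $\Lambda M$ into the components $\Lambda_c M$ indexed by conjugacy classes of $\pi_1(M)$, check $0<\kappa<\infty$, extract a minimizer from a minimizing sequence using Rellich--Kondrachov (equivalently, the compact embedding $H^1(S^1,M)\hookrightarrow C^0(S^1,M)$ via Arzel\`a--Ascoli that the paper cites) together with weak lower semicontinuity of $E$, and then bootstrap regularity. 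You correctly flag the two genuinely geometric inputs, namely that $C^0$-convergence preserves the free homotopy class and that a short loop in a compact manifold is null-homotopic via the injectivity radius. The paper's alternative hint via the negative gradient flow of $E$ and condition (C) of Palais--Smale gives an equivalent route: flow the sublevel sets downward; completeness plus (C) produces a critical point at level $\kappa$. Both are classical, and your packaging via direct minimization is arguably the more elementary of the two.

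One cosmetic slip: with the paper's normalization $E[\gamma]=\int_{S^1}\lVert\dot\gamma\rVert^2\,dt$ on $S^1=\mathbb{R}/\mathbb{Z}$, Cauchy--Schwarz gives $L(\gamma)^2\le E[\gamma]$, not $L(\gamma)^2\le 2E[\gamma]$; the factor $2$ belongs to the convention $E=\tfrac12\int\lVert\dot\gamma\rVert^2$. This has no effect on the argument since only positivity of $\kappa$ is used.
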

The aforementioned theorem is only valid for $\pi_1(M)$ non-trivial, i.e. when $M$ is multiply connected. The theorem of Lyusternik--Fet covers the case for $M$ simply connected.
\begin{theorem}[Lyusternik--Fet \cite{lyusternik-fet}]
Any closed simply connected Riemannian manifold carries at least one closed geodesic.
\end{theorem}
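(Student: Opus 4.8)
The plan is to carry out the classical minimax argument of Lyusternik and Fet for the energy functional $E\colon\Lambda M\to\mathbb{R}$, $E(\gamma)=\tfrac12\int_{S^1}|\dot\gamma(t)|^2\,dt$, exploiting that a closed simply connected manifold cannot be contractible. I would begin by recording the analytic prerequisites on the free loop space of Sobolev class $H^1$: the functional $E$ is of class $C^1$, its critical points are precisely the closed geodesics together with the constant loops, and --- because $M$ is compact --- $E$ satisfies the Palais--Smale condition and is bounded below by $0$. Together with the negative gradient flow of $E$ this yields the standard deformation lemma: if $c>0$ is not a critical value of $E$, then for all sufficiently small $\varepsilon>0$ the sublevel set $\{E\le c+\varepsilon\}$ deformation retracts into $\{E\le c-\varepsilon\}$.

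Next I would isolate the topology that forces a nonconstant geodesic. Since $\pi_1(M)=0$ the manifold is orientable, so $H_{\dim M}(M;\mathbb{Z})\cong\mathbb{Z}$ and $M$ is not contractible; hence there is a least integer $k\ge 2$ with $\pi_k(M)\neq 0$, and by minimality $\pi_j(M)=0$ for $1\le j\le k-1$. The evaluation fibration $\Omega M\hookrightarrow\Lambda M\xrightarrow{\mathrm{ev}}M$ admits the section $s\colon M\to\Lambda M$ sending a point to the constant loop at it, so the connecting homomorphisms in its homotopy exact sequence vanish and $\pi_{k-1}(\Omega M)$ injects into $\pi_{k-1}(\Lambda M)$. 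As $\pi_{k-1}(\Omega M)\cong\pi_k(M)\neq 0$, I would fix a map $g\colon S^{k-1}\to\Lambda M$ adjoint to a nonzero element of $\pi_k(M)$ --- so that $g$ factors through the based loops $\Omega M\subset\Lambda M$ --- with $0\neq[g]\in\pi_{k-1}(\Lambda M)$.

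Then I would define the minimax value $c=\inf_{h\simeq g}\ \max_{x\in S^{k-1}}E\bigl(h(x)\bigr)$, which is finite since $S^{k-1}$ is compact and $E$ is continuous. The step I expect to be the main obstacle is showing $c>0$. This rests on the local lemma that there is $\varepsilon>0$ for which every loop of energy at most $\varepsilon$ has length smaller than the injectivity radius of $M$ and therefore lies inside a geodesically convex ball, which lets one exhibit a deformation retraction of $\{E\le\varepsilon\}$ onto the submanifold of constant loops $M=\{E=0\}$ --- here the real work is to make the contraction continuous in $\gamma$ and compatible with the $H^1$ topology. Granting this, if $c=0$ then for any $\delta\in(0,\varepsilon)$ there is a map $h\simeq g$ with image in $\{E\le\delta\}\subset\{E\le\varepsilon\}$; composing with the retraction shows $g$ is homotopic in $\Lambda M$ to a map into $M$, so $[g]$ lies in the image of $s_*\colon\pi_{k-1}(M)\to\pi_{k-1}(\Lambda M)$. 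But $\pi_{k-1}(M)=0$, so $[g]=0$, contradicting the choice of $g$; hence $c>0$.

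Finally I would apply the minimax principle: by the deformation lemma together with the Palais--Smale condition, the value $c\in(0,\infty)$ is a critical value of $E$, so there is $\gamma\in\Lambda M$ with $dE(\gamma)=0$ and $E(\gamma)=c$. Then $\gamma$ is a closed geodesic, and since $E(\gamma)=c>0$ it is nonconstant, which gives the asserted closed geodesic on $M$.
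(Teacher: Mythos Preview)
Your outline is a correct rendition of the classical Lyusternik--Fet minimax argument; the only point to note is that the paper does not actually prove this theorem. It is stated with a citation to \cite{lyusternik-fet} and used as background for the later discussion, so there is no proof in the paper to compare against. Your approach---Palais--Smale for $E$ on $\Lambda M$, the nontriviality of $\pi_{k-1}(\Lambda M)$ coming from $\pi_k(M)\neq 0$ via the evaluation fibration, the positivity of the minimax level using that a sufficiently low sublevel retracts onto the constant loops, and the standard deformation lemma---is exactly the argument one finds in Klingenberg \cite{klingenberg} or in the survey \cite{alexandru} cited by the paper, and it is sound as sketched.
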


\begin{definition}
A non-constant closed geodesic is said to be prime if it traces out its image exactly once and if it is not the iterate of another closed geodesic. Similarly, two geodesics are geometrically distinct if their images differ as subsets of $M$.
\end{definition} 
Two geodesics have the same image if and only if their parameterizations differ modulo an affine transformation of $\mathbb{R}$. Note, in general, prime closed geodesics are not simple; that is, they admit self-intersections. We henceforth address the following question.
\begin{question}
Does every closed Riemannian manifold $M$ admit infinitely many geometrically distinct, non-constant, prime closed geodesics?
\end{question}
The case for $S^2$ has been proven in the affirmative by Hingston \cite{hingston}. However, the solution is not known in general. In the case of multiply connected manifolds, we can enumerate a class for which the fundamental group $\pi_1(M)$ has infinitely many closed geodesics, whereby the minimization procedure produces infinitely many geometrically distinct geodesics, one for each homotopy class. As such, the essence of the problem is reduced to the simply connected case, which was addressed by Gromoll and Meyer \cite{gromoll-meyer}. 

\begin{notation}
We will use the following conventional notations.
\begin{enumerate}[label=(\roman*)]
\item $\Omega M$ denotes the loop space of a manifold $M$.
\item $\mathcal{L} M$ denotes the smooth free loop space on a manifold $M$.
\item $\Lambda M$ denotes the free loop space of Sobolev class $H^1=W^{1,2}$ on a manifold $M$.
\item $\mathbb{F}_p$ denotes the field with $p$ elements for $p\ge 2$ prime.
\item $b_k(\Lambda M;\mathbb{F}_p):=\text{rank }H_k(\Lambda M;\mathbb{F}_p)$ is the $k$-th Betti number of $\Lambda M$ in $\mathbb{F}_p$ coefficients, which is equivalent to the $k$-th Betti number of $\mathcal{L} M$ in $\mathbb{F}_p$ coefficients.
\end{enumerate}
\end{notation}

\begin{definition}
Closed geodesics $\gamma:S^1=\mathbb{R}/\mathbb{Z}\to M$ are critical points of the energy functional 
\begin{equation}\label{energyfunctional}
E[\gamma]=\int_{S^1}\|\dot\gamma(t)\|^2dt,
\end{equation} which is defined on the free loop space $\Lambda M=H^1(S^1,M)$ of Sobolev class $H^1$.
\end{definition}
Thus, the critical points of $E$ are determined by the topology of the Hilbert manifold $\Lambda M$. Let the bilinear map $\nabla:\Gamma(TM)\times\Gamma(TM)\to\Gamma(TM)$ on sections of the tangent bundle $TM$ be the Levi-Civita connection on $M$. For a smooth curve $\gamma: I\subset\mathbb{R}\to M$ parameterized by $t$, the covariant derivative $\nabla_{\dot\gamma}\dot\gamma :=\nabla_{t}\dot\gamma$ defines a vector field along $\gamma$. The curve $\gamma$ is said to be a geodesic if it satisfies the autoparallel transport equation 
\begin{equation}\label{autoparallel}
\nabla_t\dot\gamma=0.
\end{equation} 
A geodesic has constant speed $\|\dot\gamma(t)\|$ because $\frac{1}{2}\frac{d}{dt}\|\dot\gamma\|^2=\langle \nabla_t\dot\gamma,\dot\gamma\rangle$. Equation (\ref{autoparallel}) is a second-order, non-linear, ordinary differential equation with smooth coefficients. 

Let $p,q\in M$ be distinct points, and consider the space of smooth paths defined on $[0,1]$ from $p$ to $q$:
\begin{equation}
\mathcal{P}(p,q)=\{\gamma\in C^{\infty}([0,1],M):\gamma(0)=p,\gamma(1)=q\}.
\end{equation}
The space of such paths $\mathcal{P}(p,q)$ is a Fr\'echet manifold and the tangent space at a path $\gamma$ is defined as $T_{\gamma}P(p,q)=\{\xi\in\Gamma(\gamma^*TM):\xi(0)=0,\xi(1)=0\}$. That is, an element $\xi\in T_{\gamma}\mathcal{P}(p,q)$ uniquely determines a curve 
\begin{equation}
\alpha_{\xi}:\mathcal{V}(0)\subset\mathbb{R}\to\mathcal{P}(p,q),\quad \alpha_{\xi}(s)(t):=exp_{\gamma(t)}(s\xi(t)),
\end{equation} where $\alpha_{\xi}(0)=\gamma$ and $\evalat[\big]{\frac{d}{ds} }{s=0}\alpha_{\xi}(s)(t)=\xi(t),t\in[0,1]$. The energy functional $E:\mathcal{P}(p,q)\to\mathbb{R}$ for this Fr\'echet manifold is defined as 
\begin{equation}\label{energyfunctional}
E[\gamma]:=\int_0^1 \|\dot\gamma(t)\|^2dt.
\end{equation} Similarly, the differential of $E$ at $\gamma$ is a linear map $dE[\gamma]:T_\gamma\mathcal{P}(p,q)\xrightarrow{\sim}\mathbb{R}$ given by the first variation formula via its action on a smooth section of the tangent bundle $\xi\in T_{\gamma}\mathcal{P}(p,q)$, 
\begin{equation}
dE[\gamma]\xi=2\int_0^1\langle\nabla_t\xi,\dot\gamma\rangle=-2\int_0^1\langle\xi,\nabla_t\dot\gamma\rangle.
\end{equation}
\begin{remark}
The curve $\gamma\in C^{\infty}([0,1],M)$ is a critical point of $E$ if and only if $\nabla_t\dot\gamma=0$; that is, $\gamma$ is a geodesic from $p$ to $q$.
\end{remark} 
For our purposes, we examine Banach and, more specifically, Hilbert manifolds. Consider the following space of paths of Sobolev class $H^1$ defined on $[0,1]$ from $p$ to $q$ as the domain for the energy functional:
\begin{equation}
\Omega(p,q)=\{\gamma\in H^1([0,1],M):\gamma(0)=p,\gamma(1)=q\}.
\end{equation}
Such paths are continuous so the endpoint conditions are well-defined. The space $\Omega(p,q)$ is, in fact, a smooth Hilbert manifold with tangent space at $\gamma\in \Omega(p,q)$ given by the vector space of vector fields along $\gamma$ belonging to the Sobolev class $H^1$ and vanishing at endpoints:
\begin{equation}
T_{\gamma}\Omega(p,q)=\{\xi\in H^1(\gamma^*TM):\xi(0)=0,\xi(1)=0\}.
\end{equation}
Moreover, the energy functional $E:\Omega(p,q)\to\mathbb{R}$ is $C^2$. The critical points are the $H^1$-solutions of the elliptic autoparallel transport equation $\nabla_t\dot\gamma=0$. As such, the geodesic solutions are necessarily smooth. 

Consider the length functional 
\begin{equation}
L:\mathcal{P}(p,q)\to\mathbb{R}, \quad L[\gamma]:=\int_0^1\|\dot\gamma(t)\|^2dt.
\end{equation} 
The functional $L$ is invariant under the action of the infinite-dimensional group of diffeomorphisms on the interval [0,1], i.e. $L[\gamma(gt)]=L[\gamma(t)]$ for all $g\in G:=\text{Diffeo}([0,1])$. Thus, the critical points of $L$ form infinite-dimensional families of solutions. The degeneracy is rectified by observing that some path $\gamma\in\mathcal{P}(p,q)$ admits positive reparameterizations on $[0,1]$ with constant speed \cite{alexandru}. A path $\gamma\in\mathcal{P}(p,q)$ is a geodesic if and only if it is a critical point of the length functional $L[\gamma]$ and has constant speed.

Consider the quotient space $S^1=\mathbb{R}/\mathbb{Z}$ and the space of smooth loops in $M$: 
\begin{equation}
\mathcal{L}M:=C^{\infty}(S^1,M),
\end{equation} which is a Fr\'echet manifold on which the $2$-dimensional orthogonal group $O(2)=SO(2)\rtimes\{\pm 1\}$ acts by $(e^{i\theta}\cdot\gamma)(t):=\gamma(t+\theta)$ and $(-1\cdot\gamma)(t):=\gamma(-t)$. For economy, we consider the smooth Banach manifold
\begin{equation}
\Lambda M:=H^1(S^1,M),
\end{equation} which is the so-called $\textit{free loop space}$ of $M$. The inclusion map $\mathcal{L}M\hookrightarrow \Lambda M$ is a homotopy equivalence, and the $O(2)$ action descends naturally to $\Lambda M$. Equation (\ref{energyfunctional}) defines a functional $E:\Lambda M\to\mathbb{R}$, whose critical points are smooth periodic or closed geodesics. However, under present considerations,  $E$ is $O(2)$-invariant which induces critical point degeneracy. Closed geodesics may be classified crudely by bifurcating isotropy groups. In particular, a constant geodesic corresponds to a point in $M$ with isotropy group $O(2)$, whereas the isotropy group of a non-constant geodesic is $\mathbb{Z}/k\mathbb{Z}$ with $1/k$ its minimal period for $k\in\mathbb{Z}^+$. 

The tangent space at a point $\gamma\in\Lambda M$, identified with a path, is $T_{\gamma}\Lambda M=H^1(\gamma^*TM)$, which is the space of sections of $\gamma^*TM$ of Sobolev class $H^1$ for $\gamma:S^1\to M$ and $\gamma^*:T^*M\to T^*S^1$ its pullback. The free loop space $\Lambda M$ of Sobolev class $H^1$ is a Hilbert manifold with respect to a $H^1$-inner product 
\begin{equation}
\langle\xi,\eta\rangle_1:=\int_{S^1}\langle\xi,\eta\rangle+\int_{S^1}\langle\nabla_t\xi,\nabla_t\eta\rangle=\langle\xi,\eta\rangle_0+\langle\nabla_t\xi,\nabla_t\eta\rangle_0.
\end{equation} 
The Arzel\'a-Ascoli theorem implies a compact inclusion $\Lambda M=H^1(S^1,M)\hookrightarrow C^0(S^1,M)$. This inclusion may be used to prove the following result.
\begin{proposition}[Klingenberg \cite{klingenberg}]
The Riemannian metric on $\Lambda M$ given by the $H^1$-inner product is complete.
\end{proposition}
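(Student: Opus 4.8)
The plan is to exhibit an explicit distance-nonincreasing map into a complete space and use it to show Cauchy sequences converge. First I would fix an isometric embedding $M \hookrightarrow \mathbb{R}^N$ (Nash), so that $\Lambda M = H^1(S^1,M)$ sits inside the Hilbert space $\mathcal{H} = H^1(S^1,\mathbb{R}^N)$ as a (closed) submanifold, the closedness coming from the fact that $H^1$-convergence implies $C^0$-convergence via Arzelà--Ascoli and $M$ is closed in $\mathbb{R}^N$. The distance $d_1$ induced by the intrinsic $H^1$-inner product dominates the distance induced by the ambient $\mathcal{H}$-norm on $\Lambda M$, because the length of any path of loops measured intrinsically is at least its length measured in $\mathcal{H}$ (the embedding $M\hookrightarrow\mathbb{R}^N$ is isometric, so it is $1$-Lipschitz on tangent vectors, and then on the induced $H^1$-inner products). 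Hence a $d_1$-Cauchy sequence $(\gamma_n)$ in $\Lambda M$ is Cauchy in $\mathcal{H}$, and since $\mathcal{H}$ is complete it converges in $\mathcal{H}$ to some $\gamma_\infty \in H^1(S^1,\mathbb{R}^N)$; by the closedness just noted, $\gamma_\infty \in \Lambda M$.

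The remaining point is to upgrade $\mathcal{H}$-convergence $\gamma_n \to \gamma_\infty$ to convergence in the intrinsic metric $d_1$, i.e. to show $d_1(\gamma_n,\gamma_\infty)\to 0$. The key steps are: (i) use a tubular neighborhood of $M$ in $\mathbb{R}^N$ with its smooth retraction $\pi$, so that for $n$ large all $\gamma_n$ and $\gamma_\infty$ lie in a coordinate chart of $\Lambda M$ modeled on a ball in $\mathcal{H}$ via $\pi$; (ii) observe that in such a chart the intrinsic $H^1$-metric and the ambient $\mathcal{H}$-metric are \emph{locally Lipschitz equivalent}, the comparison constants depending only on $C^0$-bounds on the loops (which hold uniformly by Arzelà--Ascoli) and on bounds for the metric coefficients of $M$ and their first derivatives along the relevant compact region; (iii) conclude $d_1(\gamma_n,\gamma_\infty) \le C \,\|\gamma_n - \gamma_\infty\|_{\mathcal{H}} \to 0$.

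I expect the main obstacle to be step (ii): making precise that the two Riemannian metrics on $\Lambda M$ — the intrinsic $H^1$-metric and the one restricted from $\mathcal{H}$ — are uniformly comparable on the region swept out by a Cauchy sequence. This requires controlling $\nabla_t \xi$ (covariant derivative along $\gamma$) in terms of $\dot\xi$ (ordinary derivative in $\mathbb{R}^N$) and $\xi$ itself, with constants involving $\|\dot\gamma\|_{L^2}$ and the Christoffel symbols; the subtlety is that these constants must be taken uniform in $n$, which is exactly what the compact inclusion $H^1(S^1,M)\hookrightarrow C^0(S^1,M)$ delivers, since a $d_1$-Cauchy sequence has uniformly bounded energy and hence uniformly bounded $C^0$-norm, confining everything to a compact subset of $M$ on which all geometric quantities are bounded. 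Once this uniform equivalence is in hand, completeness follows immediately, and one notes the argument in fact shows the intrinsic and ambient $H^1$-topologies on $\Lambda M$ coincide.
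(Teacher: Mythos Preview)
The paper does not actually prove this proposition; it is stated with a citation to Klingenberg, preceded only by the hint that the compact inclusion $\Lambda M = H^1(S^1,M)\hookrightarrow C^0(S^1,M)$ (Arzel\`a--Ascoli) ``may be used to prove'' it. So there is no detailed argument in the paper to compare against, only that one-line indication.

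Your overall plan is the standard one and is sound in outline, but the first step has a genuine error. You assert that the inclusion $\Lambda M\hookrightarrow\mathcal{H}=H^1(S^1,\mathbb{R}^N)$ is $1$-Lipschitz on tangent vectors, so that $d_1$-Cauchy implies $\mathcal{H}$-Cauchy. In fact the inequality points the other way: for $\xi\in T_\gamma\Lambda M$ the Euclidean derivative decomposes orthogonally as $\dot\xi=\nabla_t\xi+\mathrm{II}(\dot\gamma,\xi)$ (second fundamental form, normal-valued), so
\[
|\dot\xi|^2=|\nabla_t\xi|^2+|\mathrm{II}(\dot\gamma,\xi)|^2\geq|\nabla_t\xi|^2,
\]
and hence $\|\xi\|_{\mathcal{H}}\geq\|\xi\|_1$. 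The inclusion \emph{expands} tangent vectors; the isometry of $M\hookrightarrow\mathbb{R}^N$ controls only the $L^2$ part of the $H^1$-norm, not the derivative part. Thus you cannot pass from $d_1$-Cauchy to $\mathcal{H}$-Cauchy by this shortcut.

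The repair is exactly the paper's hint combined with your own step (ii). First use the pointwise Sobolev inequality $\|\xi\|_\infty\leq C\|\xi\|_1$ for vector fields along a loop, integrated along paths in $\Lambda M$, to obtain $d_\infty\leq C\,d_1$; a $d_1$-Cauchy sequence is then $C^0$-Cauchy with a uniform limit $\gamma_\infty\in C^0(S^1,M)$. Now work in a chart around $\gamma_\infty$ and invoke your local comparison of $\nabla_t\xi$ with $\dot\xi$ (Christoffel symbols bounded on a compact region, energy uniformly bounded along the sequence) to get a \emph{two-sided} equivalence $c\|\xi\|_{\mathcal{H}}\leq\|\xi\|_1\leq C\|\xi\|_{\mathcal{H}}$ with uniform constants. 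That yields $\mathcal{H}$-Cauchy, a limit in $\Lambda M$, and then $d_1$-convergence. In short, your step (ii) is the real engine of the proof and must already be used to obtain the forward direction $d_1$-Cauchy $\Rightarrow$ $\mathcal{H}$-Cauchy; the one-line justification in your first paragraph is not available.
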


\section{Morse Theory\label{sec:morsetheory}}

The gradient $\nabla E$ is a vector field on $\Lambda M$ defined by the following inner product.
\begin{equation}
\langle\nabla E[\gamma],\xi\rangle_1=dE[\gamma]\xi=\langle\dot\gamma,\nabla_t\xi\rangle_0, \quad \xi\in H^1(\gamma^*TM)
\end{equation} 
Furthermore, if $\gamma$ is smooth then $\langle\dot\gamma,\nabla_t\xi\rangle_0=-\langle\nabla_t\dot\gamma,\xi\rangle_0$ such that $\nabla E[\gamma]\in\Gamma(\gamma^*TM)$ is the unique periodic solution of the differential equation: 
\begin{equation}
\nabla_t^2\eta(t)-\eta(t)=\nabla_t\dot\gamma(t).
\end{equation} 
The following condition of Palais and Smale is necessary to extend Morse theory to the infinite-dimensional setting of Hilbert manifolds $\Lambda M$.
\begin{theorem}[Palais-Smale \cite{palais-smale}]\label{Palais-Smale} The energy functional $E:\Lambda M\to\mathbb{R}$ satisfies condition (C) of Palais and Smale:

(C) Let $(\gamma_m)\in\Lambda M$ be a sequence such that $E[\gamma_m]$ is bounded and $\|\nabla E[\gamma_m]\|_1\to 0$. Then $(\gamma_m)$ has limit points and every limit point is a critical point of $E$.
\end{theorem}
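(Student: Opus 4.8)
The plan is to verify condition (C) by exploiting the compact inclusion $\Lambda M = H^1(S^1,M)\hookrightarrow C^0(S^1,M)$ and the structure of the energy functional. Suppose $(\gamma_m)$ is a sequence in $\Lambda M$ with $E[\gamma_m]\le c$ for some constant $c$ and $\|\nabla E[\gamma_m]\|_1\to 0$. First I would establish the crucial preliminary: a uniform bound on $(\gamma_m)$ in the $H^1$-norm. Since $M$ is closed, hence of bounded diameter and compact, the $C^0$-component of any loop is automatically controlled once we know the loop lies near $M$ embedded in some $\mathbb{R}^N$ via Nash; meanwhile $\int_{S^1}\|\dot\gamma_m\|^2\,dt = E[\gamma_m]\le c$ controls the derivative part. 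Thus $(\gamma_m)$ is $H^1$-bounded, and along a subsequence $\gamma_m\rightharpoonup\gamma$ weakly in $H^1$. By the Arzel\'a--Ascoli compactness of the inclusion into $C^0$, after passing to a further subsequence $\gamma_m\to\gamma$ uniformly, so $\gamma\in\Lambda M$; this produces the limit point.

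The next step is to upgrade weak $H^1$-convergence to strong $H^1$-convergence, which is where the Palais--Smale condition $\|\nabla E[\gamma_m]\|_1\to 0$ does its work. Working in local charts (or in the ambient $\mathbb{R}^N$ with $M$ a submanifold), I would write the $H^1$-inner product as $\langle\xi,\eta\rangle_1 = \langle\xi,\eta\rangle_0 + \langle\nabla_t\xi,\nabla_t\eta\rangle_0$ and test the near-vanishing gradient against the difference $\gamma_m - \gamma_n$ (suitably interpreted via parallel transport or the chart identification, since these live in different fibers). The key identity is that $dE[\gamma_m](\xi) = \langle\dot\gamma_m,\nabla_t\xi\rangle_0$, so pairing $\nabla E[\gamma_m]$ with an appropriate vector field whose $\nabla_t$-part is essentially $\dot\gamma_m - \dot\gamma_n$ yields, modulo lower-order terms that converge to zero by the already-established $C^0$-convergence, control on $\|\dot\gamma_m - \dot\gamma_n\|_0^2$. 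Hence $(\dot\gamma_m)$ is Cauchy in $L^2$, and combined with uniform convergence this gives $\gamma_m\to\gamma$ strongly in $H^1$.

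Finally, strong convergence together with continuity of $\nabla E:\Lambda M\to T\Lambda M$ forces $\nabla E[\gamma] = \lim_m \nabla E[\gamma_m] = 0$, so $\gamma$ is a critical point of $E$, i.e. a closed geodesic; elliptic regularity for $\nabla_t\dot\gamma = 0$ then shows $\gamma$ is smooth. The main obstacle is the second step: the objects $\gamma_m$, $\gamma_n$, and $\gamma$ are loops in $M$ whose velocity vectors lie in different (pulled-back) bundles, so ``subtracting'' them is not literal and must be made precise — either by embedding $M$ isometrically in Euclidean space and carrying out the estimate there (then the difference $\gamma_m - \gamma_n$ is an honest $\mathbb{R}^N$-valued function, and one must check the normal-bundle corrections are lower order), or by using a family of charts and a partition of unity, where one must control the overlaps uniformly in $m$. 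The $C^0$-convergence from Arzel\'a--Ascoli is exactly what makes these chart/embedding corrections harmless in the limit, so the logical order — first get the $C^0$ limit, then bootstrap to $H^1$ — is essential.
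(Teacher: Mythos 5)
The paper states this theorem purely as a cited result of Palais and Smale and provides no proof, so there is no in-paper argument to compare against. Your sketch reproduces the standard argument found in Klingenberg \cite{klingenberg} and in Oancea's survey \cite{alexandru}: the $H^1$-bound follows from compactness of $M$ (controlling the $L^2$-part) together with $E[\gamma_m]\le c$ (controlling the derivative part); the compact Sobolev embedding $H^1\hookrightarrow C^0$ yields a uniformly convergent subsequence with limit $\gamma\in\Lambda M$; and the gradient condition is then used to upgrade this to an $H^1$-Cauchy sequence. The obstacle you flag --- that $\dot\gamma_m$ and $\dot\gamma_n$ live in different pulled-back bundles and cannot be literally subtracted --- is indeed the one genuine technical hurdle, and your proposed fixes (isometric embedding into $\mathbb{R}^N$ with normal-bundle corrections, or charts with $C^0$-controlled overlaps) are exactly how the full proofs handle it; the $C^0$-convergence obtained first is precisely what makes the correction terms vanish. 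One small refinement to your middle step: pairing \emph{only} $\nabla E[\gamma_m]$ against a field whose $\nabla_t$-part is $\dot\gamma_m-\dot\gamma_n$ gives $\langle\dot\gamma_m,\dot\gamma_m-\dot\gamma_n\rangle_0$ rather than $\|\dot\gamma_m-\dot\gamma_n\|_0^2$; to produce the square you should symmetrize, pairing $\nabla E[\gamma_m]$ and $\nabla E[\gamma_n]$ against fields built from $\gamma_m-\gamma_n$ along $\gamma_m$ and along $\gamma_n$ respectively, and adding --- this yields $\|\dot\gamma_m-\dot\gamma_n\|_0^2$ bounded by $\bigl(\|\nabla E[\gamma_m]\|_1+\|\nabla E[\gamma_n]\|_1\bigr)$ times an $H^1$-bounded quantity plus remainders killed by uniform convergence. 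With that bookkeeping fixed, your outline is sound and is essentially the proof in the literature.
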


Let $\text{Crit}($E$):=\{\gamma\in\Lambda M:dE[\gamma]=0\}$ denote the set of critical points of the energy functional, i.e., the set of closed geodesics on $M$. For $a\ge 0$, we denote
\begin{align*}
   \Lambda^{\le a}:&=\{\gamma\in\Lambda M:E[\gamma]\le a\}, \\
   \Lambda^{<a}:&=\{\gamma\in\Lambda M:E[\gamma]<a\},\\
\intertext{and}
\Lambda^a:&=\{\gamma\in\Lambda M:E[\gamma]=a\}.
\end{align*} 
 The following properties of the negative gradient flow $\frac{d}{ds}\phi_s=-\nabla E(\phi_s)$ of the energy functional are due to condition (C) of Palais Smale. 

\begin{enumerate}[label=(\roman*)]
\item Crit($E$)$\cap\Lambda^{\le a}$ is compact for all $a\ge 0$.
\item the flow $\phi_s$ is defined for all $s\ge 0$.
\item given an interval $[a,b]$ of regular values, there exists a real number $s_0\ge 0$ such that $\phi_s(\Lambda^{\le b})\subset\Lambda^{\le a}$ for all $s\ge s_0$.
\item $\Lambda^0\equiv M\subset\Lambda^{\le\varepsilon}$ is a strong deformation retract via $\phi_s$ for $\varepsilon>0$ sufficiently small.
\end{enumerate}

Suppose $\gamma\in \text{Crit(}E\text{)}$ is a critical point. The $O(2)$-invariance of the energy functional $E$ means that the orbit $O(2)\cdot \gamma $ is contained entirely in Crit($E$).
\begin{definition} The $\textit{index}$ $\lambda(\gamma)$ of $\gamma$ is the dimension of the negative eigenspace of the Hessian $d^2E[\gamma]$.
\end{definition}
\begin{definition}
The nullity $\nu(\gamma)$ of $\gamma$ is the dimension of the kernel of $d^2E[\gamma]$, i.e. $\nu(\gamma):=\dim\ker d^2E[\gamma]$.
\end{definition} 
Note, $\langle\dot\gamma\rangle\in \ker d^2E[\gamma]$ so $\nu(\gamma)\ge 1$ if $\gamma$ is non-constant because Crit($E$) is invariant under the $S^1$ reparameterization action. The Hessian of $E$ at $\gamma$ is given by the second variation formula:
\begin{equation}
d^2E[\gamma](\xi,\eta)=-\int\langle\xi,\nabla_t^2\eta+R(\dot\gamma,\eta)\dot\gamma\rangle.
\end{equation}
\begin{definition}\label{morsefunction}
A $C^2$-function $f$ defined on a Hilbert manifold is a Morse function if all of its critical points are non-degenerate, which means that the Hessian has a $0$-dimensional kernel at each critical point $c$ so $\dim\ker d^2f(c)=0$.
\end{definition}
The energy functional $E$ defined on $\Lambda M$ will never satisfy the conditions of Definition \ref{morsefunction} because the critical points at level $0$ form a closed $\dim M$-dimensional manifold, which means that it is never non-degenerate. In a similar vein, the energy functional $E$ is $S^1$-invariant which means that the kernel of the Hessian of $E$ at a non-constant geodesic $\gamma$ is always at least $1$-dimensional, $\dim\ker d^2E[\gamma] \ge 1$, because it contains the infinitesimal generator $\dot\gamma$ of the $S^1$-action.
\begin{definition}[Oancea \cite{alexandru}] 
A $C^2$-function defined on a Hilbert manifold is said to be a Morse-Bott function if its critical set is a disjoint union of closed connected submanifolds $\bigsqcup_i\Sigma_i$ and, for each critical point $c_i$, the kernel of the Hessian at that critical point coincides with the tangent space to the respective connected component of the critical locus, $\ker d^2f(c_i)\cong T\Sigma_i$. As such, the critical set is said to be non-degenerate. 
\end{definition}
\begin{definition}
The index $\lambda(p)$ of a critical point $p$ is defined as $\iota(p)=\dim\{V_{\text{max}}\subset T_pM: d^2f(p)<0\}$, the dimension of the maximal subspace of the tangent space at $p$ on which the Hessian is negative definite.
\end{definition}

\begin{definition} 
The nullity $\nu(p)$ of a critical point $p$ is defined as $\nu(p)=\dim\ker d^2f(p)$, the dimension of the kernel of the Hessian at $p$.
\end{definition} 
\begin{theorem}[Palais \cite{palais-hilbertmanifolds}]\label{criticalmorse}
Assume that the Riemannian metric on $M$ is chosen in such a way that the energy functional $E$ on $\Lambda M$ is Morse-Bott.
\begin{enumerate}[label=(\roman*)]
\item The critical values of $E$ are isolated and there are only a finite number of connected components of Crit($E$) on each critical set.
\item The index and nullity of each connected component of Crit($E$) are finite.
\item If there are no critical values of $E$ in $[a,b]$ then $\Lambda^{\le b}$ retracts onto and is diffeomorphic to $\Lambda^{\le a}$.
\item Let $a<c<b$ and assume $c$ is the only critical value of $E$ in the interval [a,b]. Denote by $\Sigma_1,\cdots,\Sigma_r$ the connected components of Crit($E$) at level $c$, and denote by $\lambda_1,\cdots,\lambda_r$ their respective indices. \begin{itemize}
  \item Each manifold $\Sigma_i$ carries a well-defined vector bundle $\nu^{-}\Sigma_i$ of rank $\lambda_i$ consisting of negative directions for $d^2E|_{\Sigma_i}$.
  \item The sublevel set $\Lambda^{\le b}$ retracts onto a space homeomorphic to $\Lambda^{\le a}$ with the disk bundles $D\nu^{-}\Sigma_i$ disjointly attached to $\Lambda^{\le a}$ along their boundaries.
\end{itemize}
\end{enumerate}
\end{theorem}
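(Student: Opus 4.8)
The plan is to verify the four assertions in turn, using condition~(C) (Theorem~\ref{Palais-Smale}) and the negative-gradient-flow properties (i)--(iv) recorded above as the substitutes for compactness that make the finite-dimensional Morse--Bott package go through on $\Lambda M$.

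\textbf{Finiteness (parts (i) and (ii)).} I would start with the Hessian. Integrating the second variation formula by parts gives $d^2E[\gamma](\xi,\eta)=\langle\nabla_t\xi,\nabla_t\eta\rangle_0-\langle R(\dot\gamma,\xi)\dot\gamma,\eta\rangle_0$, which with respect to the $H^1$-inner product has the shape $\langle(\mathrm{Id}-K_\gamma)\xi,\eta\rangle_1$ for a self-adjoint operator $K_\gamma$ that is compact, because the curvature term factors through the compact inclusion $H^1(\gamma^*TM)\hookrightarrow C^0(\gamma^*TM)$ furnished by Arzel\'a--Ascoli. By the spectral theorem for compact self-adjoint operators the eigenvalues of $\mathrm{Id}-K_\gamma$ accumulate only at $1$; hence $0$, if it occurs, is isolated in the spectrum of $d^2E[\gamma]$, the negative eigenspace (of dimension $\lambda(\gamma)$) and $\ker d^2E[\gamma]$ (of dimension $\nu(\gamma)$) are finite-dimensional, and $d^2E[\gamma]$ is bounded away from $0$ on the orthogonal complement of its kernel. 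This gives (ii) and the spectral gap used below. For (i), the essential input is a Hilbert-manifold Morse--Bott lemma: near a critical component $\Sigma$ there is a tubular neighborhood and a fiber-preserving diffeomorphism onto a neighborhood of the zero section of a bundle $\nu^-\Sigma\oplus\nu^+\Sigma$ over $\Sigma$ in which $E=c-\|v^-\|^2+\|v^+\|^2$. Granting this, the only critical points in such a neighborhood lie on the zero section, where $E\equiv c$; so if critical values accumulated at some $c$, condition~(C) would produce a critical point $\gamma$ with $E[\gamma]=c$ together with a sequence of critical points converging to it but with critical values $\neq c$, contradicting the normal form. Finiteness of the number of components at a given level then follows because $\mathrm{Crit}(E)\cap\Lambda^{\le c}$ is compact by gradient-flow property (i) while the $\Sigma_i$ are pairwise disjoint closed submanifolds; in particular each $\Sigma_i$ is compact.

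\textbf{Part (iii).} On $E^{-1}([a,b])$ the norm $\|\nabla E\|_1$ is bounded below away from $0$, since otherwise condition~(C) would yield a critical point with critical value in $[a,b]$. Normalizing the negative gradient so that it decreases $E$ at unit rate and cutting it off outside a slightly larger energy band (still free of critical values, regular values being open) produces a complete smooth vector field whose time-$(b-a)$ flow carries $\Lambda^{\le b}$ diffeomorphically onto $\Lambda^{\le a}$. Reparameterizing this flow so that each point flows exactly until it reaches $\Lambda^{\le a}$, and is stationary once there, gives the strong deformation retraction of $\Lambda^{\le b}$ onto $\Lambda^{\le a}$. (Alternatively one may quote gradient-flow property (iii) and use that the flow maps are local diffeomorphisms.)

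\textbf{Part (iv) and the main obstacle.} From the spectral gap in (ii), the Riesz spectral projection of $d^2E[\gamma]$ onto its negative eigenspace, taken along a contour in the gap, depends continuously on $\gamma\in\Sigma_i$, so its images assemble into a vector bundle $\nu^-\Sigma_i$ over $\Sigma_i$ of rank $\lambda_i$; the Morse--Bott hypothesis $\ker d^2E|_{\Sigma_i}=T\Sigma_i$ leaves no residual null direction, so $T\Lambda M|_{\Sigma_i}=T\Sigma_i\oplus\nu^-\Sigma_i\oplus\nu^+\Sigma_i$ with $d^2E$ negative definite on $\nu^-\Sigma_i$ and positive definite on $\nu^+\Sigma_i$. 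In the normal-form neighborhood of the Morse--Bott lemma the passage from $\Lambda^{\le c-\varepsilon}$ to $\Lambda^{\le c+\varepsilon}$ is the classical one: a deformation retraction exhibits $\Lambda^{\le c+\varepsilon}$ as $\Lambda^{\le c-\varepsilon}$ with the disk bundle $D\nu^-\Sigma_i$ glued on along its sphere-bundle boundary $S\nu^-\Sigma_i$. Combining this with part (iii), first flowing $\Lambda^{\le b}$ down to $\Lambda^{\le c+\varepsilon}$ and then down outside a disjoint union of tubular neighborhoods of the finitely many compact $\Sigma_i$, yields the stated global description. The main obstacle is precisely the infinite-dimensional Morse--Bott lemma underlying all of this: it requires the Gromoll--Meyer splitting machinery for $C^2$ Fredholm functionals on Hilbert manifolds, carried out in a family parametrized by $\Sigma_i$, together with a check that the implicit-function-theorem step removes the non-negative directions cleanly and that the Morse--Bott nondegeneracy forces the quadratic remainder to vanish. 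Once that local model is in hand, the remaining steps are the standard deformation-and-gluing arguments, with condition~(C) guaranteeing that the global flows are well behaved.
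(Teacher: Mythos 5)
The paper does not actually prove Theorem~\ref{criticalmorse}; it is stated as a result of Palais, with the Morse--Bott formulation and disk-bundle-attachment phrasing taken from Oancea \cite{alexandru}, so there is no proof in the paper to compare against. Judged on its own terms, your outline is a faithful sketch of the standard argument: the compact inclusions $H^1(\gamma^*TM)\hookrightarrow C^0$ and $H^1\hookrightarrow L^2$ make the $H^1$-Hessian a compact perturbation of the identity, yielding (ii) and the spectral gap; condition~(C) together with a local normal form gives the isolation of critical values and finiteness of components in (i); the uniform lower bound on $\|\nabla E\|_1$ off the critical set gives (iii) via the renormalized flow; and the Riesz spectral projection along a contour in the gap assembles $\nu^-\Sigma_i$ as a smooth bundle in (iv). You have also correctly identified the only genuine technical obstacle, the Morse--Bott normal form on a Hilbert manifold (Gromoll--Meyer splitting carried out in a family over the compact critical submanifold); this is precisely what Palais's and Oancea's treatments spend their effort establishing, and your proof does presuppose it rather than supply it, so a complete argument would still need to cite or reprove that lemma. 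One cosmetic imprecision worth fixing: the compact operator $K_\gamma$ comes from \emph{both} the zeroth-order $L^2$ term and the curvature term, since
\begin{equation*}
d^2E[\gamma](\xi,\eta)=\langle\xi,\eta\rangle_1-\langle\xi,\eta\rangle_0-\langle R(\dot\gamma,\xi)\dot\gamma,\eta\rangle_0,
\end{equation*}
not from the curvature term alone; both summands factor through the compact inclusion, so the conclusion is unaffected, but the statement as written is slightly misleading.
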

We denote the disk bundles by $D\nu^-\Sigma_i$, which are associated to a fixed scalar product on the fibers $\nu^-\Sigma_i$, a sub-bundle of the normal bundle $T\Sigma_i^{\perp}$ with inner product given by the Hilbert structure of the manifold $\Lambda M$. According to \cite{alexandru}, there exist smooth embeddings $\phi_i:\partial D\nu^-\Sigma_i\to\partial\Lambda^{\le a}$ with disjoint images from which we can construct a quotient space $\Lambda^{\le a}\cup\bigcup_iD\nu^{-}\Sigma_i/\sim$ such that a point lying in the boundary $\partial D\nu^{-}\Sigma_i$ can be identified with the image under $\phi_i$ in $\partial\Lambda^{\le a}$.
\begin{remark}
When one passes the critical level $c$, the sublevel $c+\varepsilon$, for $\varepsilon>0$ sufficiently small, deformation retracts onto the the union of the sublevel $c-\varepsilon$ and a $\lambda(c)$-cell. The $\lambda$-cell is the negative vector bundle over the critical manifold consisting of a single point. Such a retraction is forced by a modification of the negative gradient flow $\frac{d}{ds}\phi_s=-\nabla f(\phi_s)$ of $f$.
\end{remark} 
\begin{corollary}
Adopting the the aforementioned a posteriori hypotheses, the sublevel set $\Lambda^{\le b}$ is diffeomorphic to and deformation retracts onto $\Lambda^{\le c}$. Similarly, the sublevel set $\Lambda^{<c}$ is diffeomorphic to and deformation retracts onto $\Lambda^{\le a}$. In particular, let $c\in[a,b]$ be a non-degenerate critical point of $E$ of index $\lambda(c)$ such that $E[c]=r$. Furthermore, suppose $E^{-1}[r-\varepsilon,r+\varepsilon]$ is compact and contains no critical points other than $c$. Then it follows that $\Lambda^{\le r+\varepsilon}$ is homotopy equivalent to the union of $\Lambda^{\le r-\varepsilon}$ and a $\lambda(c)$-cell.
\end{corollary}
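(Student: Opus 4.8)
The plan is to deduce the Corollary entirely from Theorem~\ref{criticalmorse}, treating it as the special case in which the critical set at the relevant level is a single non-degenerate point; no new analytic input is needed beyond the four clauses of that theorem together with the consequences (i)--(iv) of condition (C) listed above. Throughout I will use that, under the standing Morse--Bott hypothesis, the critical values of $E$ are isolated (clause (i)), so that any compact energy interval contains finitely many of them and they can be separated by regular values.

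First I would establish the two deformation-retraction statements. Since $c$ is the only critical value of $E$ in $[a,b]$, the half-open interval $(c,b]$ contains no critical values; by isolation of critical values, for every sufficiently small $\delta>0$ the interval $[c+\delta,b]$ consists of regular values, so clause (iii) of Theorem~\ref{criticalmorse} furnishes a deformation retraction of $\Lambda^{\le b}$ onto $\Lambda^{\le c+\delta}$, realized by running the negative gradient flow $\phi_s$ for a retraction time $s_0$ uniform on that interval (property (iii) of the Palais--Smale consequences). Because $\Lambda^{\le c}=\bigcap_{\delta>0}\Lambda^{\le c+\delta}$ and these flows can be chosen coherently, a limiting argument using the completeness of the $H^1$-metric (Klingenberg) and condition (C) produces a deformation retraction of $\Lambda^{\le b}$ onto $\Lambda^{\le c}$; the diffeomorphism assertion is read at the level of $\Lambda^{\le c+\delta}$ for small $\delta$, with $\Lambda^{\le c}$ obtained in the limit. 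The statement for $\Lambda^{<c}$ is symmetric: $[a,c)$ carries no critical values, so for each $a<a'<c$ clause (iii) retracts $\Lambda^{\le a'}$ onto $\Lambda^{\le a}$, and since $\Lambda^{<c}=\bigcup_{a'<c}\Lambda^{\le a'}$ is an increasing union one passes to the colimit to obtain a deformation retraction of $\Lambda^{<c}$ onto $\Lambda^{\le a}$.

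For the concluding clause I would apply clause (iv) of Theorem~\ref{criticalmorse} on the interval $[r-\varepsilon,r+\varepsilon]$: compactness of $E^{-1}[r-\varepsilon,r+\varepsilon]$ and the hypothesis that it contains no critical point other than $c$ guarantee that $r$ is the only critical value there and that $\mathrm{Crit}(E)$ at level $r$ is the single point $c$. Non-degeneracy of $c$ forces $\nu(c)=0$, so the critical component through $c$ is the $0$-dimensional manifold $\Sigma_1=\{c\}$; its negative bundle $\nu^-\Sigma_1$ is then the fibre $\mathbb{R}^{\lambda(c)}$ over a point, and the disk bundle $D\nu^-\Sigma_1$ is the closed disk $D^{\lambda(c)}$, that is, a $\lambda(c)$-cell with boundary sphere $S^{\lambda(c)-1}$. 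Clause (iv), via the smooth embedding $\phi_1\colon S^{\lambda(c)-1}=\partial D^{\lambda(c)}\to\partial\Lambda^{\le r-\varepsilon}$ supplied by Oancea's construction, then gives that $\Lambda^{\le r+\varepsilon}$ retracts onto the adjunction space $\Lambda^{\le r-\varepsilon}\cup_{\phi_1}D^{\lambda(c)}$, which is precisely $\Lambda^{\le r-\varepsilon}$ with a $\lambda(c)$-cell attached; combined with the first part of the Corollary this yields the claimed homotopy equivalence. The only genuinely delicate point in the whole argument is the passage to the limit (respectively the colimit) in the two retraction statements: one must choose the family of gradient-flow retractions coherently in $\delta$ so that the limit is again a deformation retraction and not merely a homotopy equivalence, and this is exactly where the uniform retraction time of property (iii) and completeness of the $H^1$-metric are used; the identification of a disk bundle over a point with a cell and the invocation of the embeddings $\phi_i$ are formal.
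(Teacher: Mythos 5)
The paper states this Corollary without proof (it is presented as an immediate consequence of Theorem~\ref{criticalmorse}), so there is no explicit argument to compare against; your plan of deducing everything from the four clauses of that theorem is certainly the intended route, and your treatment of the final clause is correct and complete. Non-degeneracy of $c$ gives $\nu(c)=0$, so the critical locus at level $r$ is the $0$-manifold $\{c\}$, the negative bundle over a point is the trivial $\mathbb{R}^{\lambda(c)}$, the disk bundle is $D^{\lambda(c)}$, and clause (iv) then produces the adjunction $\Lambda^{\le r-\varepsilon}\cup_{\phi_1}D^{\lambda(c)}$. That part needs no further comment.

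The genuine gap is in the two diffeomorphism assertions, which your proposal does not actually establish and, as written, cannot: $\Lambda^{<c}=E^{-1}((-\infty,c))$ is an open subset of $\Lambda M$ and hence a boundaryless Hilbert manifold, whereas $\Lambda^{\le a}$ has nonempty smooth boundary $\Lambda^a=E^{-1}(a)$ once $a$ is a regular value above the minimum of $E$; a boundaryless manifold cannot be diffeomorphic to a manifold with nonempty boundary. Similarly, $\Lambda^{\le c}$ has singular ``boundary'' at the critical points in $\Lambda^c$, so it is not a smooth manifold with boundary and cannot be diffeomorphic to $\Lambda^{\le b}$. Your remark that ``the diffeomorphism assertion is read at the level of $\Lambda^{\le c+\delta}$ for small $\delta$, with $\Lambda^{\le c}$ obtained in the limit'' conflates a family of diffeomorphisms $\Lambda^{\le b}\cong\Lambda^{\le c+\delta}$ with a diffeomorphism onto the limiting set, and no such limit diffeomorphism exists. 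This is arguably an imprecision in the Corollary's statement itself --- the phrase ``diffeomorphic to and deformation retracts onto'' is borrowed from the situation of Theorem~\ref{criticalmorse}(iii) and the subsequent Corollary, where both endpoints are regular values and it is correct --- but as a proof proposal you should either restrict the diffeomorphism claim to regular sublevels $\Lambda^{\le c'}$ for $c'\in(c,b]$ (resp. $c'\in[a,c)$), or flag the statement as requiring reinterpretation. The deformation-retraction claims themselves are fine in substance, though the limiting argument would be cleaner if phrased as in Palais \cite{palais-hilbertmanifolds}: reparametrize the negative gradient flow by first-exit time from $\Lambda^{>c}$, using condition (C) and the isolation of the critical value $c$ to show this time function is finite and the resulting retraction continuous; the intersection/colimit framing you use is morally the same but hides where continuity at $t=1$ comes from.
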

\begin{corollary}
Suppose $E:\Lambda M\to\mathbb{R}$ is a real-valued smooth functional on $\Lambda M$, $a<b$, $E^{-1}[a,b]$ is compact, and there are no critical values between $a$ and $b$. Then the sublevel set $\Lambda^{\le b}$ is diffeomorphic to and deformation retracts onto $\Lambda^{\le a}$.
\end{corollary}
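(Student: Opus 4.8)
\noindent\emph{Proof plan.} The plan is to slide the sublevel set $\Lambda^{\le b}$ down onto $\Lambda^{\le a}$ along a suitably rescaled negative gradient flow of $E$, exactly as in the classical finite-dimensional deformation lemma, with the infinite-dimensional subtleties absorbed by the completeness of the $H^1$-metric. The starting observation is that, since $E^{-1}[a,b]$ is compact and by hypothesis contains no critical points of $E$, the continuous function $\gamma\mapsto\|\nabla E[\gamma]\|_1$ vanishes nowhere on it and hence attains a strictly positive minimum $\delta>0$ there; by continuity of $E$ and of $\nabla E$ one may then choose $\epsilon>0$ so that $\|\nabla E[\gamma]\|_1\ge\delta/2$ on the slightly larger band $E^{-1}[a-\epsilon,b+\epsilon]$.

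Next I would fix a cutoff function $\rho:\Lambda M\to[0,1]$ with $\rho\equiv 1$ on $E^{-1}[a,b]$ and $\rho\equiv 0$ outside $E^{-1}[a-\epsilon,b+\epsilon]$, and define the vector field
\[
X[\gamma]:=-\frac{\rho[\gamma]}{\|\nabla E[\gamma]\|_1^{2}}\,\nabla E[\gamma],
\]
set to $0$ wherever $\rho$ vanishes. On the support of $\rho$ the bound $\|\nabla E\|_1\ge\delta/2$ makes $X$ a smooth vector field with $\|X\|_1\le 2/\delta$ globally, so by Klingenberg's completeness proposition its flow $\psi_s$ is complete, i.e.\ defined for all $s\in\mathbb{R}$. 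One then computes, along any orbit,
\[
\frac{d}{ds}E[\psi_s(\gamma)]=dE[\psi_s(\gamma)]\,X[\psi_s(\gamma)]=-\rho[\psi_s(\gamma)]\le 0,
\]
which equals $-1$ precisely while $E[\psi_s(\gamma)]\in[a,b]$; thus $E$ is non-increasing along $\psi_s$ and decreases at unit rate over that band.

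With this in hand I would show that $\Psi:=\psi_{b-a}$ restricts to a diffeomorphism $\Lambda^{\le b}\xrightarrow{\sim}\Lambda^{\le a}$. If $a<E[\gamma]\le b$, then $E$ decreases at unit rate until the orbit first meets level $a$, which happens at time $E[\gamma]-a\le b-a$, so $E[\Psi(\gamma)]\le a$; and if $E[\gamma]\le a$ then $E[\Psi(\gamma)]\le E[\gamma]\le a$ since $E$ never increases along $\psi_s$. Hence $\Psi(\Lambda^{\le b})\subseteq\Lambda^{\le a}$. Running the flow backwards raises $E$ at rate at most $1$, so $\Psi^{-1}=\psi_{-(b-a)}$ maps $\Lambda^{\le a}$ into $\Lambda^{\le b}$; applying $\Psi$ to that inclusion gives the reverse one, whence $\Psi$ is a bijection $\Lambda^{\le b}\to\Lambda^{\le a}$, smooth with smooth inverse since the flow of a smooth vector field is a diffeomorphism of $\Lambda M$. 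For the strong deformation retraction I would reparametrize: put $\sigma(\gamma):=\max\{E[\gamma]-a,\,0\}$, which is continuous, and define $r:\Lambda^{\le b}\times[0,1]\to\Lambda^{\le b}$, $r(\gamma,u):=\psi_{u\sigma(\gamma)}(\gamma)$. Then $r(\cdot,0)=\mathrm{id}$; $E$ is non-increasing along $r$, so $r$ indeed lands in $\Lambda^{\le b}$; $r(\gamma,u)=\gamma$ for all $u$ whenever $E[\gamma]\le a$; and $r(\gamma,1)$ lies at level $a$ if $E[\gamma]>a$ and equals $\gamma$ if $E[\gamma]\le a$, so $r(\cdot,1):\Lambda^{\le b}\to\Lambda^{\le a}$ is a retraction. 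This furnishes the claimed strong deformation retraction, completing the argument.

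The only point that needs genuine care is the bookkeeping at the two ends of the interval—choosing $\epsilon$ and $\rho$ so that the gradient stays bounded below on the enlarged band $E^{-1}[a-\epsilon,b+\epsilon]$—which is exactly where the hypothesis that there are no critical values on $[a,b]$ (and hence, by continuity, none slightly beyond) is used; the remaining ingredients, namely local existence of the flow, its extension to all times, and joint continuity of $r$, are routine once the completeness of the $H^1$-metric and the regularity of $E$ are invoked. Finally, I note that in the intended applications the compactness of $E^{-1}[a,b]$ assumed here outright is itself a consequence of condition (C) of Theorem \ref{Palais-Smale} together with the absence of critical values in $[a,b]$, so this corollary is precisely the form in which the deformation lemma will be applied in the sequel.
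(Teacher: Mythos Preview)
The paper does not actually prove this corollary; it is recorded as an immediate instance of part~(iii) of Theorem~\ref{criticalmorse} (Palais), which already asserts that in the absence of critical values on $[a,b]$ the sublevel $\Lambda^{\le b}$ retracts onto and is diffeomorphic to $\Lambda^{\le a}$. Your write-up instead reconstructs the classical gradient-flow deformation argument that underlies Palais's statement, so it is more self-contained than what the paper offers.

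One technical step needs tightening. Your extension of the lower bound $\|\nabla E\|_1\ge\delta/2$ from the compact set $E^{-1}[a,b]$ to the band $E^{-1}[a-\epsilon,b+\epsilon]$ does not follow from continuity alone: in an infinite-dimensional Hilbert manifold, points with $E$-value just below $a$ or just above $b$ need not lie anywhere near $E^{-1}[a,b]$, so no uniform gradient bound on the enlarged band is forced, and likewise ``no critical values slightly beyond'' is not automatic. The standard remedy is to bypass the band entirely: choose a smooth $\rho:\Lambda M\to\mathbb{R}_{\ge 0}$ equal to $1/\|\nabla E\|_1^{2}$ on the compact set $E^{-1}[a,b]$ and supported in the open set $\{\gamma:\|\nabla E[\gamma]\|_1>\delta/2\}$---such a $\rho$ exists by paracompactness since the former set is compact and contained in the latter---and set $X=-\rho\,\nabla E$. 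Then $\|X\|_1\le 2/\delta$ globally, the flow is complete by completeness of the $H^1$-metric, and $dE\cdot X=-1$ on $E^{-1}[a,b]$ exactly as you need. With this adjustment the remainder of your argument (the diffeomorphism $\Psi=\psi_{b-a}$ and the strong deformation retraction $r(\gamma,u)=\psi_{u\sigma(\gamma)}(\gamma)$) goes through unchanged.
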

The above provides a useful heuristic for determining the homology groups of $\Lambda M$ by way of the non-decreasing filtration $\Lambda^{\le a}$, whereby \begin{equation} \Lambda M=\varinjlim_{a\to\infty}\Lambda^{\le a}. \end{equation} Thus, the homology of $\Lambda M$ is $H_{\bullet}(\Lambda M)=\varinjlim H_{\bullet}(\Lambda^{\le a})$. Suppose $E$ is non-degenerate such that the sequence $0=c_0<c_1<c_2<\cdots$ of its critical values is diverging. Then the following holds \cite{alexandru}.
\begin{enumerate}[label=(\roman*)]
\item If $c_i<a<c_{i+1}$, an inclusion $\Lambda^{\le c_i}\hookrightarrow\Lambda^{\le a}$ induces an isomorphism on homology $H_{\bullet}(\Lambda^{\le c_i})\cong H_{\bullet}(\Lambda^{\le a})$ which means the homology groups of $\Lambda^{\le a}$ change when passing a critical value.
\item The change in the homology upon crossing a critical level $c$ of $E$ is entirely captured by the long exact sequence of the pair $(\Lambda^{\le c},\Lambda^{<c})$:
\begin{equation*}
\cdots\longrightarrow H_{\bullet}(\Lambda^c)\longrightarrow H_{\bullet}(\Lambda^{\le c})\longrightarrow H_{\bullet}(\Lambda^{\le c},\Lambda^{<c})\stackrel{[-1]}{\longrightarrow}\cdots
\end{equation*}
\item If there are no critical values of $E$ in $[a,b]$ then $\Lambda^{\le b}$ retracts onto and is diffeomorphic to $\Lambda^{\le a}$. 
Let $\Sigma^c:=\bigsqcup_{i=1}^r \Sigma_i$ be the critical set of $E$ at level $c$. Therefore, the following isomorphisms hold. 
\begin{equation*}
H_{\bullet}(\Lambda^{\le c},\Lambda^{<c})\cong H_{\bullet}\left(\bigsqcup_iD\nu^-\Sigma_i,\bigsqcup_i\partial D\nu^-\Sigma_i\right)\cong\bigoplus_iH_{\bullet-\lambda_i}(\Sigma_i;o_{\nu^-\Sigma_i})
\end{equation*} Note, $H_{\bullet-\lambda_i}(\Sigma_i;o_{\nu^-\Sigma_i})$ denotes the homology groups of $\Sigma_i$ with coefficients given by the local system of orientations of the bundle $\nu^-\Sigma_i$. The first isomorphism follows from excision and (iv) of Theorem \ref{criticalmorse}. Likewise, the second isomorphism is induced by the Thom isomorphism for finite rank vector bundles. 
\end{enumerate}
We present the useful and well-known theorem of Gromoll and Meyer.
\begin{theorem}[Gromoll-Meyer \cite{gromoll-meyer}]\label{Gromoll/Meyer} Let $M$ be a simply connected closed manifold. If the sequence $\{b_k(\Lambda M;\mathbb{F}_p)\}_{k\ge 0}$ is unbounded for some prime $p\ge 2$, then any Riemannian metric on $M$ admits infinitely many prime closed geodescis.
\end{theorem}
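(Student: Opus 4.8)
\noindent The plan is to argue by contradiction, using Morse theory for the energy functional $E$ on $\Lambda M$ together with Bott's iteration formula. Suppose that $M$ admitted only finitely many geometrically distinct non-constant prime closed geodesics, say $c_1,\dots,c_r$. Then every non-constant closed geodesic on $M$ is an iterate $c_i^m$ for some $1\le i\le r$ and $m\ge 1$, so the critical set of $E$ is the disjoint union of the manifold of constant loops $\Lambda^0\cong M$ at level $0$ with the circles $S^1\cdot c_i^m$, the $S^1$-orbits of the iterates. Because $E[c_i^m]=m^2E[c_i]\to\infty$, only finitely many of these orbits lie in any sublevel set $\Lambda^{\le a}$. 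Invoking the handle decomposition of $\Lambda M$ along the critical orbits --- as in Theorem~\ref{criticalmorse}, but extended to the possibly \emph{degenerate} orbits $S^1\cdot c_i^m$ by the Gromoll--Meyer generalized Morse lemma \cite{gromoll-meyer} --- each $\Lambda^{\le a}$ is obtained from $M$ by attaching finitely many handles, and $\Lambda M=\varinjlim_a\Lambda^{\le a}$. The weak Morse inequalities (subadditivity of Betti numbers under handle attachments, passed to the direct limit) then give, for every $k$,
\begin{equation*}
b_k(\Lambda M;\mathbb{F}_p)\;\le\;b_k(M;\mathbb{F}_p)+\sum_{i=1}^{r}\ \sum_{m\ge 1}\ \dim_{\mathbb{F}_p}\overline{H}_k\!\left(c_i^m\right),
\end{equation*}
where $\overline{H}_{\bullet}(c_i^m)$ denotes the local homology of the orbit of $c_i^m$, i.e.\ its contribution to $H_{\bullet}(\Lambda^{\le c},\Lambda^{<c})$ in the sense of the Morse-theoretic long exact sequences discussed after Theorem~\ref{criticalmorse}.

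Next I would bound the local homology of a single iterate. By the Gromoll--Meyer splitting of $E$ near a closed geodesic, which presents $E$, up to a non-degenerate quadratic form of index $\lambda(c_i^m)$, as a function on the $\nu(c_i^m)$-dimensional nullity directions, one obtains that $\overline{H}_q(c_i^m)=0$ unless $\lambda(c_i^m)\le q\le\lambda(c_i^m)+\nu(c_i^m)$, together with a rank bound $\sum_q\dim_{\mathbb{F}_p}\overline{H}_q(c_i^m)\le K$ for a constant $K=K(\dim M)$. The crucial input is the classical estimate $\nu(c_i^m)\le 2(\dim M-1)$ for the nullity of any closed geodesic, so that both the width of the degree window and the total rank of the local homology are bounded uniformly in $i$ and $m$.

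Finally I would invoke Bott's iteration formula: for each $c_i$ the averaged index $\hat\lambda(c_i):=\lim_{m\to\infty}\lambda(c_i^m)/m$ exists, is $\ge 0$, and $\big|\lambda(c_i^m)-m\,\hat\lambda(c_i)\big|\le\beta$ for all $m$, where $\beta=\beta(\dim M)$. If $\hat\lambda(c_i)>0$, then for a fixed degree $k$ the window $[\lambda(c_i^m),\,\lambda(c_i^m)+\nu(c_i^m)]$ contains $k$ for at most $N_i$ values of $m$, where $N_i$ depends on $c_i$ but not on $k$; hence $\sum_m\dim_{\mathbb{F}_p}\overline{H}_k(c_i^m)\le N_iK$ for every $k$. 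If $\hat\lambda(c_i)=0$, then $\lambda(c_i^m)\le\beta$ for all $m$, so $\overline{H}_k(c_i^m)=0$ whenever $k>\beta+2(\dim M-1)$, and $c_i$ contributes nothing to $b_k(\Lambda M)$ in that range. Combining these estimates with the displayed inequality, there is a constant $C$ such that $b_k(\Lambda M;\mathbb{F}_p)\le C$ for all $k>\beta+2(\dim M-1)$. Since $H_{\bullet}(\Lambda M;\mathbb{F}_p)$ is of finite type --- which follows from the fibration $\Omega M\hookrightarrow\Lambda M\to M$ and the finiteness of $H_{\bullet}(M;\mathbb{F}_p)$ and of $H_{\bullet}(\Omega M;\mathbb{F}_p)$ in each degree, using that $M$ is a simply connected closed manifold --- the finitely many remaining Betti numbers are finite as well, so the sequence $\{b_k(\Lambda M;\mathbb{F}_p)\}_{k\ge 0}$ is bounded. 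This contradicts the hypothesis, so $M$ must carry infinitely many geometrically distinct non-constant prime closed geodesics.

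The step I expect to be the main obstacle is the second one: the uniform control of the local homology $\overline{H}_{\bullet}(c^m)$ of a possibly degenerate iterate, valid for an \emph{arbitrary} Riemannian metric rather than merely a bumpy one. This is exactly where the handle decomposition of $\Lambda M$ must do its work --- one needs the finite-dimensional reduction that splits off the negative and positive-definite directions of the Hessian near each critical orbit with no genericity assumption, and then a bound on the local homology carried by the at most $2(\dim M-1)$ nullity directions, uniform over all geodesics and all iterates. Once this and the precise Bott index estimates are in hand, the remaining ingredients --- condition~(C) of Palais--Smale, isolation of critical values along each sublevel, and the finite type of $H_{\bullet}(\Lambda M)$ --- are routine to assemble into the global Morse inequality above.
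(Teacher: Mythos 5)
Your proposal is the correct, standard Gromoll--Meyer argument: assuming finitely many prime geodesics, bound $b_k(\Lambda M;\mathbb{F}_p)$ by summing local homology contributions over iterates, use the generalized Morse lemma to confine each contribution to the degree window $[\lambda(c^m),\lambda(c^m)+\nu(c^m)]$ with rank bounded by a dimension constant, and use index iteration to show only boundedly many iterates can hit a given degree. The paper does not actually prove Theorem~\ref{Gromoll/Meyer}---it cites Gromoll--Meyer and then sketches exactly these ingredients (the window bound, Lemmata~\ref{Index Iteration} and~\ref{Nullity iteration}, and ``a combinatorial argument''), so you have correctly reconstructed that sketch; the only cosmetic deviation is that you invoke Bott's averaged-index bound $|\lambda(c^m)-m\hat\lambda(c)|\le\beta$ in place of the paper's index-iteration dichotomy $\lambda(\gamma_{m+s})-\lambda(\gamma_m)\ge\varepsilon s-C$, which are interchangeable for showing that a fixed degree $k$ lies in the contributing window for only boundedly many $m$.
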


It was shown by Vigu\'e-Poirrier and Sullivan \cite{poirrier-sullivan} that for a simply connected closed manifold $M$, the sequence of rational Betti numbers $b_k(\Lambda M;\mathbb{Q})=\text{rank }H^k(\Lambda M;\mathbb{Q})$ is unbounded if and only if the rational cohomology ring $H^*(M;\mathbb{Q})=\bigoplus_{k\in\mathbb{N}}H^k(M;\mathbb{Q})$ has at least two generators. That is, $M$ does not have rational cohomology that is the quotient of a polynomial ring. There are a few simply connected compact globally symmetric spaces of rank $>1$ whose rational cohomology ring $H^*(M;\mathbb{Q})$ is not a truncated polynomial ring. In fact, it was shown by Ziller \cite{ziller} that compact globally symmetric spaces of rank $>1$ are such that the sequence of Betti numbers for their free loop space is unbounded with coefficients in $\mathbb{F}_2$. On the other hand, compact globally symmetric spaces of rank $1$ have rational cohomology generated by a single element.

We must address two complications that arise in view of Gromoll and Meyer \ref{Gromoll/Meyer}, the first being that iterates $\gamma_m:S^1\to M$ with $\gamma_m(t)=\gamma(mt), m\in\mathbb{Z}$ of a closed geodesic $\gamma$ are themselves closed geodesics (under the action of infinite-dimensional reparameterization), and thereby constitute a  part of the homology of free loop space $\Lambda M$. Second, we would like to handle degeneracy for closed geodesics. It was shown by Gromoll and Meyer \cite{gromoll-meyer} that a single degenerate closed geodesic adds to homology by at most $\nu(\gamma)$ for degrees contained in $[\lambda(\gamma),\lambda(\gamma)+\nu(\gamma)]$. This result follows from the lemmata which dictate the behavior of index and nullity subject to iteration.
\begin{lemma}[Gromoll-Meyer Index Iteration \cite{gromoll-meyer}]\label{Index Iteration}
The index is either $\lambda(\gamma_m)=0$ for all $m\ge 1$ or there is an $\varepsilon>0$ and $C>0$ such that for all $m,s\ge 1$:
\begin{equation}
\lambda(\gamma_{m+s})-\lambda(\gamma_m)\ge\varepsilon s-C.
\end{equation}
\end{lemma}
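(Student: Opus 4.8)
The plan is to follow Gromoll and Meyer \cite{gromoll-meyer}, reducing the statement to Bott's iteration formula for the Morse index of a closed geodesic and then extracting the linear growth from a Riemann-sum comparison with the average index. Fix a non-constant closed geodesic $\gamma$ and let $P$ be the linearized Poincar\'e return map of the geodesic flow along $\gamma$, a symplectic automorphism of the $(2\dim M-2)$-dimensional space transverse to $\dot\gamma$. Bott's analysis of the index form on the spaces of $\omega$-twisted periodic vector fields along $\gamma$ provides a function $\Lambda_\gamma:S^1\to\mathbb{Z}_{\ge 0}$ with: (i) $\lambda(\gamma_m)=\sum_{\omega^m=1}\Lambda_\gamma(\omega)$ for every $m\ge 1$; (ii) $\Lambda_\gamma(\omega)\ge 0$ for all $\omega$; (iii) $\Lambda_\gamma$ is constant on each connected component of the complement of the finite set $\mathcal{E}\subset S^1$ of eigenvalues of $P$ on the unit circle, with jumps across points of $\mathcal{E}$ bounded by $2\dim M-2$; and (iv) $\Lambda_\gamma$ is lower semicontinuous, since a quadratic form that is negative definite on a fixed finite-dimensional subspace remains so under small perturbation, whence $\Lambda_\gamma(\omega_0)\le\liminf_{\omega\to\omega_0}\Lambda_\gamma(\omega)$. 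I would quote (i)--(iv) from Bott's iteration theory, as presented in Klingenberg \cite{klingenberg}, rather than reprove them.

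Next set $\bar\lambda:=\frac{1}{2\pi}\int_0^{2\pi}\Lambda_\gamma(e^{i\theta})\,d\theta$, a nonnegative real number (nonnegative by (ii), finite because $\Lambda_\gamma$ takes finitely many values). I claim there is a constant $C_1=C_1(\gamma)$ with $|\lambda(\gamma_m)-m\bar\lambda|\le C_1$ for all $m\ge 1$. Writing $\lambda(\gamma_m)=\sum_{j=0}^{m-1}\Lambda_\gamma(e^{2\pi i j/m})$ and $m\bar\lambda=\sum_{j=0}^{m-1}\frac{m}{2\pi}\int_{2\pi j/m}^{2\pi(j+1)/m}\Lambda_\gamma$, the $j$-th summands coincide whenever the arc $[2\pi j/m,2\pi(j+1)/m)$ is disjoint from $\mathcal{E}$ and has left endpoint outside $\mathcal{E}$, since $\Lambda_\gamma$ is then constant on that arc by (iii); by (iii) again there are at most $2\#\mathcal{E}$ remaining indices $j$, and on each the two summands differ by at most $\sup_{S^1}\Lambda_\gamma$, so $C_1:=2\#\mathcal{E}\cdot\sup_{S^1}\Lambda_\gamma$ works.

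Now the dichotomy. If $\bar\lambda=0$, then by (ii) and $\int\Lambda_\gamma=0$ we have $\Lambda_\gamma=0$ almost everywhere; by (iii) $\Lambda_\gamma$ vanishes identically on $S^1\setminus\mathcal{E}$, and then by (iv), for each $\omega_0\in\mathcal{E}$ we get $\Lambda_\gamma(\omega_0)\le\liminf_{\omega\to\omega_0}\Lambda_\gamma(\omega)=0$; hence $\Lambda_\gamma\equiv 0$ and $\lambda(\gamma_m)=0$ for every $m\ge 1$, which is the first alternative. If instead $\bar\lambda>0$, then for all $m,s\ge 1$ the estimate of the previous paragraph gives
\[
\lambda(\gamma_{m+s})-\lambda(\gamma_m)\ \ge\ \big((m+s)\bar\lambda-C_1\big)-\big(m\bar\lambda+C_1\big)\ =\ s\bar\lambda-2C_1,
\]
so the second alternative holds with $\varepsilon:=\bar\lambda$ and $C:=2C_1$.

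The only substantial input is (i)--(iv): Bott's iteration formula and the structural properties of the index function $\Lambda_\gamma$, which rest on the intersection theory of the Jacobi equation along $\gamma$ and its $\omega$-twisted analogues. I expect this to be the main obstacle, and in a self-contained treatment it would have to be developed in full; everything downstream — the Riemann-sum comparison and the two-case split — is elementary once that machinery is in place.
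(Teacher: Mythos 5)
The paper does not actually prove this lemma; it merely states it and cites Gromoll--Meyer, so there is no in-paper argument to compare against. Your proof via Bott's iteration formula is the standard route taken in Gromoll--Meyer and in Klingenberg's book, and it is correct. Three small points worth recording. First, your count of ``bad'' indices $j$ in the Riemann-sum comparison can be sharpened to $\#\mathcal{E}$ rather than $2\#\mathcal{E}$, since the arcs $[2\pi j/m,\,2\pi(j+1)/m)$ partition $S^1$ and each eigenvalue lands in exactly one of them; your looser bound is of course still valid. Second, the statement requires $C>0$ strictly, and your $C=2C_1$ vanishes when $\mathcal{E}=\emptyset$; replacing $C$ by $\max(2C_1,1)$ fixes this triviality. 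Third, in your justification of lower semicontinuity (iv), the intuition ``a form negative definite on a fixed subspace stays negative definite under perturbation'' needs one more word of care, because it is the \emph{domain} (the space of $\omega$-twisted periodic sections) that varies with $\omega$, not the bilinear form itself; one must transplant a negative subspace for $\omega_0$ to nearby $\omega$, which is standard but not literally the statement you gave. You sensibly flag (i)--(iv) as the genuine content to be imported from Bott, and everything downstream of that is correctly and cleanly executed: the mean index $\bar\lambda$ is well-defined and finite, $|\lambda(\gamma_m)-m\bar\lambda|\le C_1$ holds, and the dichotomy on $\bar\lambda=0$ versus $\bar\lambda>0$ exactly reproduces the alternative in the lemma.
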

\begin{lemma}[Gromoll-Meyer Nullity Iteration \cite{gromoll-meyer}]\label{Nullity iteration} The nullity is either $\nu(\gamma_m)=1$ for all $m\ge 1$ or there are finitely many iterations $\gamma_{m_1},\cdots,\gamma_{m_s}$ of $\gamma$ such that for all $m\ge 1$ there is an $i$ and $j$ for which $m=jm_i$ and $\nu(\gamma_m)=\nu(\gamma_{m_i})$. That is, the sequence $\{\nu(\gamma_m)\}_{m\ge 1}$ only takes on finitely many values. 
\end{lemma}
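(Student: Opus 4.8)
The plan is to translate the $E$-nullity of the iterates $\gamma_m$ into a question about a single finite-dimensional symplectic matrix, the linearized Poincar\'e map of the primitive geodesic $\gamma$, and then read off the dichotomy by an elementary divisibility argument.

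\textbf{Step 1 (nullity equals periodic Jacobi fields).} Since $\gamma$, hence each iterate $\gamma_m(t)=\gamma(mt)$, is smooth, the second variation formula $d^2E[\gamma_m](\xi,\eta)=-\int\langle\xi,\nabla_t^2\eta+R(\dot\gamma_m,\eta)\dot\gamma_m\rangle$ and elliptic regularity identify $\ker d^2E[\gamma_m]$ with the space $\mathcal{J}_m$ of periodic Jacobi fields along $\gamma_m$, i.e.\ periodic solutions of $\nabla_t^2 J+R(\dot\gamma_m,J)\dot\gamma_m=0$. Thus $\nu(\gamma_m)=\dim\mathcal{J}_m$, and $\dot\gamma_m\in\mathcal{J}_m$ always, whence $\nu(\gamma_m)\ge 1$.

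\textbf{Step 2 (Poincar\'e reduction).} Identify Jacobi data along $\gamma$ with pairs $(J(0),\nabla_tJ(0))\in T_{\gamma(0)}M\oplus T_{\gamma(0)}M$, and let $\Phi$ be the symplectic time-one (one period of $\gamma$) flow map on this $2n$-dimensional space, $n=\dim M$. The plane $V=\mathrm{span}\{(\dot\gamma(0),0),(0,\dot\gamma(0))\}$, carrying the data of $\dot\gamma$ and of $t\dot\gamma$, is $\Phi$-invariant, and $\omega$ is nondegenerate on it since $\dot\gamma(0)\ne 0$; hence $\mathbb{R}^{2n}=V\oplus W$ with $W:=V^{\omega}$ also $\Phi$-invariant and $P:=\Phi|_W$ a $2(n-1)\times 2(n-1)$ symplectic automorphism. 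On $V$ the map $\Phi$ is unipotent with one-dimensional fixed space $\mathbb{R}\cdot(\dot\gamma(0),0)$, and so is every power $\Phi^m|_V$; and the monodromy of $\gamma_m$ is $\Phi^m$. Matching periodic Jacobi fields along $\gamma_m$ with $\ker(\Phi^m-\mathrm{id})$ then gives
\begin{equation*}
\nu(\gamma_m)=1+\dim\ker\!\left(P^m-\mathrm{id}_W\right),\qquad m\ge 1.
\end{equation*}

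\textbf{Step 3 (linear algebra and arithmetic).} Since $P$ is a fixed invertible matrix, on the generalized eigenspace of an eigenvalue $\lambda$ of $P$ the operator $P^m-\mathrm{id}$ is invertible unless $\lambda^m=1$, and when $\lambda^m=1$ a Jordan-form computation shows its kernel there is exactly $\ker(P-\lambda)$, independent of $m$. So, if $\lambda_1,\dots,\lambda_N$ are the root-of-unity eigenvalues of $P$ with $\lambda_k$ of order $p_k$, then $\dim\ker(P^m-\mathrm{id}_W)$, hence $\nu(\gamma_m)$, depends on $m$ only through $T(m):=\{k:p_k\mid m\}$. If $N=0$ then $T(m)=\emptyset$ for all $m$ and $\nu(\gamma_m)\equiv 1$, the first alternative. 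Otherwise set $\ell(m):=\mathrm{lcm}\{p_k:k\in T(m)\}$ (with $\ell(m)=1$ if $T(m)=\emptyset$). Then $\ell(m)\mid m$ and $T(\ell(m))=T(m)$ --- each $p_k$, $k\in T(m)$, divides $\ell(m)$, and $\ell(m)\mid m$ gives the reverse inclusion --- so $\nu(\gamma_m)=\nu(\gamma_{\ell(m)})$ and $\ell$ is idempotent. As $T(m)$ ranges over finitely many subsets of $\{1,\dots,N\}$, the set $\{m_1,\dots,m_s\}:=\{\ell(m):m\in\mathbb{Z}^+\}$ is finite, each $m_i$ equals $\ell(m_i)$, and for every $m$ the choice $m_i=\ell(m)$, $j=m/m_i\in\mathbb{Z}^+$ satisfies $m=jm_i$ and $\nu(\gamma_m)=\nu(\gamma_{m_i})$; in particular $\{\nu(\gamma_m)\}_{m\ge 1}$ takes only the finitely many values $\nu(\gamma_{m_1}),\dots,\nu(\gamma_{m_s})$.

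The step I expect to be the real obstacle is Step 2: making rigorous that the $E$-nullity on the free loop space $\Lambda M$ (as opposed to a based path space with fixed endpoints) is computed by periodic Jacobi fields, that the flow-invariant plane $V$ carrying the ``$\dot\gamma$, $t\dot\gamma$'' modes splits off so that reduction to the $2(n-1)$-dimensional symplectic space $W$ is legitimate, and --- crucially --- that iterating the geodesic corresponds to taking powers of $P$. This is the substance of Bott's iteration theory (which one may instead invoke wholesale); the Jordan-form computation and the $\mathrm{lcm}$/idempotence bookkeeping in Step 3 are routine.
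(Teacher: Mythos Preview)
The paper does not prove this lemma; it is stated with attribution to Gromoll--Meyer and is immediately followed by text that uses it as input, with no proof environment. There is therefore nothing in the paper against which to compare your argument.

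That said, your proof is correct and is essentially the standard Gromoll--Meyer/Bott argument. The reduction in Step~2 to the linearized Poincar\'e map $P$ on the $2(n-1)$-dimensional symplectic complement of the tangential plane $V$ is exactly how $\nu(\gamma_m)$ is computed in the original source; the Jordan-form observation in Step~3, that $\dim\ker(P^m-\mathrm{id})$ depends on $m$ only through which root-of-unity orders divide $m$, is precisely the mechanism behind the nullity iteration, and your $\mathrm{lcm}$/idempotence bookkeeping cleanly extracts the finite list $m_1,\dots,m_s$ together with the divisibility $m_i\mid m$. Your own concern about Step~2 is well-placed but not an obstruction: the identification of $\ker d^2E[\gamma_m]$ on $\Lambda M$ with periodic Jacobi fields, the $\Phi$-invariance and symplectic nondegeneracy of $V$ (your computation $\omega((\dot\gamma,0),(0,\dot\gamma))=\|\dot\gamma\|^2\ne 0$ is the point), and the resulting formula $\nu(\gamma_m)=1+\dim\ker(P^m-\mathrm{id})$ are all standard and appear in the Gromoll--Meyer paper and in Klingenberg's book, both already cited here.
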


By virtue of a combinatorial argument and the Gromoll-Meyer index and nullity iteration lemmata, if the number of simple closed geodesics is bounded, there is a bound on the Betti numbers of $\Lambda M$.

Let
\begin{equation}
A_{\gamma}:H^1(\gamma^*TM)\to L^2(\gamma^*TM),\quad \xi\mapsto -\xi''-R(\dot\gamma,\xi)\dot\gamma
\end{equation} be the self-adjoint $\textit{asymptotic elliptic operator}$ at the closed geodesic $\gamma$ with compact resolvent such that $d^2E[\gamma](\xi,\eta)=\langle A_{\gamma}\xi,\eta\rangle_{L^2}$. More precisely, 
\begin{equation}
\begin{split}
 	\langle A_{\gamma}\xi,\eta\rangle_{L^2}&=\langle A_{\gamma}\eta,\xi\rangle_{L^2}\\ 
		&=\int\langle-\eta''-R(\dot\gamma,\eta)\dot\gamma,\eta\rangle \\
           &= -\int\langle\xi,\nabla_t^2\eta+R(\dot\gamma,\eta)\dot\gamma\rangle.
\end{split}
\end{equation} Its spectrum is discrete with no accumulation point besides $\pm\infty$ \cite{kato}. The eigenvectors of $A_{\gamma}$ are smooth by elliptic regularity. The eigenvalue problem $-\xi''-R(\dot\gamma,\xi)\dot\gamma=-\lambda\xi$ has no non-trivial periodic solutions for $\lambda\gg 0$. Moreover, the spectrum of the elliptic operator $A_{\gamma}$ is bounded from below which implies the finiteness of the index of $\gamma$, given by 
\begin{equation}
\lambda(\gamma)=\sum_{\text{Spec}(A_{\gamma})\ni\lambda<0}\text{mult}(\lambda)<+\infty
\end{equation} for $\text{mult}(\lambda)$ the multiplicity of the eigenvalue $\lambda$. The nullity of $\gamma$ is 
\begin{equation}
\nu(\gamma)=\dim\ker A_{\gamma},
\end{equation} for which we have the bounds $1\le\nu(\gamma)\le 2n-1$. The first inequality is due to $\langle\dot\gamma\rangle\in\ker A_{\gamma}$, and the second inquality follows because an element in $\ker A_{\gamma}$ solves a second-order ordinary differential equation.

It was shown by Takens and Klingenberg that for a $C^4$-generic metric on a closed manifold $M$, there exist closed geodesics of $\textit{twist type}$ for which the eigenvalues of the Poincar\'e return map have modulus unity, or there exist closed geodesics of $\textit{hyperbolic type}$ for which the eigenvalues of the Poincar\'e return map lie outside or inside the unit circle. The Birkhoff-Lewis fixed point theorem implies that for the former case there are infinitely many prime closed geodesics in a neighborhood of the twist geodesic. It is then left to show that this is true for hyperbolic geodesics. It should be noted that if all geodesic orbits are hyperbolic, then $\lambda(\gamma_m)=m\lambda(\gamma)$.
\begin{theorem}[Rademacher \cite{rademacher}]\label{Rademacher} 
Let $M$ be a simply connected closed Riemannian manifold with rational cohomology a truncated polynomial ring in one even variable and all geodesics hyperbolic. Then there are infinitely many geometrically distinct hyperbolic geodesics.
\end{theorem}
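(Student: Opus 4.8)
The plan is to argue by contradiction, assuming the metric carries only finitely many geometrically distinct closed geodesics, represented by prime geodesics $\gamma^{(1)},\dots,\gamma^{(r)}$, so that every closed geodesic is an iterate $\gamma^{(i)}_m$, $m\ge 1$, each of which is hyperbolic by hypothesis. The first step is to read off the Morse data from hyperbolicity. Since the linearized Poincar\'e return map of $\gamma^{(i)}$ has no eigenvalue on the unit circle, neither does any of its powers, so the kernel of $A_{\gamma^{(i)}_m}$ is spanned by $\langle\dot\gamma^{(i)}_m\rangle$ alone; we are thus in the first alternative of Lemma~\ref{Nullity iteration}, $\nu(\gamma^{(i)}_m)=1$ for all $m$, while (as noted in the discussion preceding the statement) the index grows exactly linearly, $\lambda(\gamma^{(i)}_m)=m\,\lambda(\gamma^{(i)})$, so the average index $\hat\lambda(\gamma^{(i)}):=\lim_m\lambda(\gamma^{(i)}_m)/m$ equals $\lambda(\gamma^{(i)})$. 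One also records that $\hat\lambda(\gamma^{(i)})>0$ unless $\gamma^{(i)}$ is a local minimum of $E$; this positivity is automatic on a simply connected $M$ whose loop space has nonzero rational homology in arbitrarily high degrees, which will hold here.

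Because $\nu(\gamma^{(i)}_m)=1$, each critical circle $S^1\!\cdot\gamma^{(i)}_m$ is a nondegenerate critical submanifold in the sense of Theorem~\ref{criticalmorse}, its null space being precisely the tangent to the orbit. By the Thom-isomorphism description of how the topology of sublevel sets changes across a critical level, $S^1\!\cdot\gamma^{(i)}_m$ contributes to $H_\ast(\Lambda M,\Lambda^0;\mathbb{Q})$ only in degrees $m\lambda(\gamma^{(i)})$ and $m\lambda(\gamma^{(i)})+1$ when the negative bundle $\nu^-$ is $\mathbb{Q}$-orientable over the circle, and contributes nothing rationally otherwise; equivalently, in $S^1$-equivariant rational homology its contribution is at most a single copy of $\mathbb{Q}$, concentrated in degree $m\lambda(\gamma^{(i)})$. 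The orientation class of $\nu^-$ over $S^1\!\cdot\gamma^{(i)}_m$ is governed by the parity of $(m-1)\lambda(\gamma^{(i)})$ together with a fixed sign attached to $\gamma^{(i)}$ (it is here that the positive/negative hyperbolic dichotomy enters); the net effect is that each prime hyperbolic geodesic contributes to $H^{S^1}_\ast(\Lambda M,\Lambda^0;\mathbb{Q})$ only in degrees lying in a single arithmetic progression of step $\lambda(\gamma^{(i)})$ or $2\lambda(\gamma^{(i)})$.

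The second ingredient is the homological target. A simply connected space with $H^\ast(M;\mathbb{Q})=\mathbb{Q}[x]/(x^{k+1})$, $\deg x$ even, has minimal Sullivan model $(\Lambda(x,y),d)$ with $dy=x^{k+1}$, so the Vigu\'e-Poirrier--Sullivan model of $\Lambda M$ \cite{poirrier-sullivan}, and its $S^1$-equivariant enhancement, make $H^\ast(\Lambda M;\mathbb{Q})$ and $H^{S^1}_\ast(\Lambda M,\Lambda^0;\mathbb{Q})$ explicit: the Betti numbers are bounded but nonzero in arbitrarily high degrees, so the homology persists cofinally, and the mean Euler characteristic $\bar\chi(\Lambda M):=\lim_{N\to\infty}\frac{1}{N}\sum_{j\le N}(-1)^j\dim H^{S^1}_j(\Lambda M,\Lambda^0;\mathbb{Q})$ is a definite nonzero rational depending only on the cohomology ring. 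Now I would play the two sides against each other through a resonance identity: finiteness of the geodesic set, boundedness of the Betti numbers, and the exhaustion $\Lambda M=\varinjlim_a\Lambda^{\le a}$ together license passing to the limit in the Morse inequalities, yielding both an equation $\bar\chi(\Lambda M)=\sum_{i=1}^r \chi_{\mathrm{loc}}(\gamma^{(i)})/\hat\lambda(\gamma^{(i)})$, with each $\chi_{\mathrm{loc}}(\gamma^{(i)})$ a bounded integer of sign fixed by the parity of $\lambda(\gamma^{(i)})$, and, degreewise, that the nonvanishing rational homology of $\Lambda M$ in arbitrarily high degrees must be accounted for by the finitely many progressions attached to the $\gamma^{(i)}$. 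Confronting these—together with the lower bounds on $\hat\lambda$ available on simply connected manifolds—with the computed value and support of $H^{S^1}_\ast(\Lambda M,\Lambda^0;\mathbb{Q})$ produces the contradiction, so the metric must admit infinitely many geometrically distinct closed geodesics, all hyperbolic by hypothesis.

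I expect three points to be the real work. First, making the resonance/mean-Euler-characteristic identity rigorous: $\Lambda M$ is infinite dimensional and its Betti numbers do not tend to zero, so one must carefully interchange the Morse filtration with the averaging limit and handle the infinitely many iterates simultaneously. Second, the orientability bookkeeping—deciding exactly when $\nu^-$ is $\mathbb{Q}$-orientable over an iterated critical circle, since this is what pins down the sign of $\chi_{\mathrm{loc}}(\gamma^{(i)})$ and the arithmetic progression attached to it. Third, the explicit computation of $H^{S^1}_\ast(\Lambda M,\Lambda^0;\mathbb{Q})$ and of $\bar\chi(\Lambda M)$ from the Sullivan model for each admissible cohomology ring, and the verification that no finite family of such progressions can reproduce both the mean Euler characteristic and the degreewise ranks. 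By contrast, the index and nullity iteration lemmas (Lemmas~\ref{Index Iteration} and~\ref{Nullity iteration}), the Morse--Bott apparatus of Theorem~\ref{criticalmorse}, and the positivity of $\hat\lambda$ on simply connected manifolds are comparatively routine inputs.
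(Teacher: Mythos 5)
The paper does not prove this theorem; it imports it from Rademacher \cite{rademacher}, so there is no internal proof to measure your sketch against. On its own terms, your outline does capture the main line of Rademacher's argument: hyperbolicity forces $\nu(\gamma_m)=1$ and $\lambda(\gamma_m)=m\lambda(\gamma)$, so every critical orbit is a nondegenerate Morse--Bott circle contributing a localized, orientability-dependent class to $S^1$-equivariant rational homology, and a resonance/mean-Euler-characteristic identity plays this Morse count off against the explicit Vigu\'e-Poirrier--Sullivan computation of $H^{S^1}_*(\Lambda M,\Lambda^0;\mathbb{Q})$. The three items you flag as ``the real work'' are exactly the content of Rademacher's paper.

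Two assertions need repair before the sketch is sound. Your claim that $\hat\lambda(\gamma^{(i)})>0$ ``is automatic'' on a simply connected $M$ with cofinally nonzero loop-space homology does not follow as stated: a hyperbolic prime geodesic could have $\lambda(\gamma)=0$, hence $\hat\lambda=0$ with all iterates at index $0$. Such a geodesic cannot carry high-degree homology, so it must be excluded from the resonance sum by a direct local argument, not by appeal to the global topology of $\Lambda M$. Second, the mean Euler characteristic $\bar\chi(\Lambda M)$ is not obviously well-defined: one must first show that the equivariant Betti numbers of $(\Lambda M,\Lambda^0)$ are eventually periodic---this is precisely where the truncated-polynomial hypothesis is used with full strength---before the Ces\`aro average converges, and then compute its (nonzero) value from the minimal model. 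Together with the $\nu^-$-orientability bookkeeping you flag, these are the genuine technical content; a careful version of your sketch would essentially reproduce Rademacher's original proof.
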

\begin{theorem} 
Let $M$ be a simply connected closed Riemannian manifold with rational homotopy type that of a rank $1$ symmetric space (i.e. $S^n,\mathbb{CP}^n$, $\mathbb{HP}^n$, $\mathbb{P}^2(\mathbb{O})$) and suppose all closed geodesics on $M$ are hyperbolic. Then the number $\pi(x)$ of prime closed geodesics of length less than or equal to $x$ is on the order of 
\begin{equation}
\pi(x)=|\{\gamma\in \Lambda M: L[\gamma]\le x\}| \sim\frac{x}{logx}.
\end{equation}
\end{theorem}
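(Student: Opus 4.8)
This is Rademacher's prime geodesic theorem \cite{rademacher} for a manifold rationally homotopy equivalent to a compact rank one symmetric space, refining Theorem \ref{Rademacher} from ``infinitely many'' to a sharp count. The plan is to combine the Morse--Bott theory of Section \ref{sec:morsetheory} with two structural inputs --- hyperbolicity, which rigidifies index iteration, and the monogenic rational cohomology of $M$, which forces the cumulative Betti numbers of $\Lambda M$ to grow only linearly --- and then to extract the asymptotic by a Tauberian argument applied to the geodesic zeta function.

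For the setup: since every closed geodesic is hyperbolic the metric is bumpy, so each closed geodesic is isolated modulo the $O(2)$-action, every positive critical level of $E$ is a finite disjoint union of non-degenerate critical circles $S^1\cdot\gamma^m$, $E$ is Morse--Bott, and Theorem \ref{criticalmorse} applies verbatim. Hyperbolicity selects the non-trivial alternatives in Lemmas \ref{Index Iteration} and \ref{Nullity iteration}: $\nu(\gamma^m)=1$ and $\lambda(\gamma^m)=m\,\lambda(\gamma)$ for every prime $\gamma$ and all $m\ge1$, so that the indices of the iterates of any non-minimal closed geodesic diverge; and a Rauch / Morse index theorem comparison on the compact manifold $M$ bounds the index linearly by the length, $\lambda(\gamma)\le C_0\,L[\gamma]+C_1$.

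Next bring in the topology. Since $H^{*}(M;\mathbb{Q})$ is a truncated polynomial ring on a single generator, Vigu\'e-Poirrier--Sullivan shows $\{b_k(\Lambda M;\mathbb{Q})\}$ is bounded, and inspection of the Sullivan model of $\Lambda M$ refines this to $B(N):=\sum_{k\le N}b_k(\Lambda M;\mathbb{Q})=\beta N+O(1)$ with an explicit $\beta=\beta(M)>0$ (and similarly over $\mathbb{F}_2$, by Ziller's rank one computations). Feeding the long exact sequences of the pairs $(\Lambda^{\le c},\Lambda^{<c})$ over the critical values of $E$ into the isomorphism $H_{\bullet}(\Lambda^{\le c},\Lambda^{<c})\cong\bigoplus_i H_{\bullet-\lambda_i}(\Sigma_i;o_{\nu^-\Sigma_i})$ of Theorem \ref{criticalmorse}, each critical circle $S^1\cdot\gamma^m$ injects at most a two-dimensional group into the homology of the relevant sublevel set, concentrated in degrees $m\,\lambda(\gamma)$ and $m\,\lambda(\gamma)+1$. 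Since the cumulative Betti number grows only linearly while each iterate $\gamma^m$ occupies those two degrees, a counting argument --- bookkeeping of which of these contributions persist and which cancel in the Morse complex, organized by the arithmetic progressions $m\mapsto m\,\lambda(\gamma)$ and by the eventual orientation behaviour of $\nu^-(\gamma^m)$ --- controls the number of closed geodesics (counted with iterates) of index $\le N$. Combined with the linear index--length bound of the previous step and with a dual min--max (``resonance'', Fet-type) argument supplying the matching lower bound, this determines the growth of $N(t):=\#\{(\gamma,m):\gamma\ \text{prime},\ m\,L[\gamma]\le t\}$, the number of closed geodesics of length $\le t$ counted with iterates.

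Finally, de-iterate. Writing $n(t)=\#\{\gamma\ \text{prime}:L[\gamma]\le t\}$ for what the theorem calls $\pi(t)$, one has $N(t)=\sum_{m\ge1}n(t/m)$, so $n$ is recovered from $N$ by M\"obius inversion; equivalently one studies the geodesic zeta function $Z(s)=\sum_{\gamma}\sum_{m\ge1}\tfrac1m e^{-s\,m L[\gamma]}$, locates its first singularity, and invokes a Wiener--Ikehara / Ikehara--Delange Tauberian theorem, whose conclusion is $n(t)\sim t/\log t$ --- the factor $\log t$ being precisely what the de-iteration contributes, in exact analogy with the passage from $\psi(x)\sim x$ to $\pi(x)\sim x/\log x$ in the prime number theorem. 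I expect the principal difficulty to be the counting argument of the third step: disentangling which local homology contributions of the hyperbolic iterates survive into $H_\bullet(\Lambda M)$ and which cancel --- for, the bounded Betti numbers notwithstanding, there is genuine cancellation in the Morse complex to account for, and the orientation system $o_{\nu^-\Sigma_i}$ must be tracked along the iteration --- together with, on the analytic side, securing enough error control in $N(t)$ (equivalently, enough regularity of $Z(s)$ near its critical line) to run the Tauberian theorem and pin down the precise constant rather than merely the order of magnitude.
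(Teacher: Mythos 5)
The paper states this theorem without any proof: it appears immediately after Theorem \ref{Rademacher} (Rademacher) as a bare citation of a known result, and the text moves directly on to equivariant cohomology. There is therefore no ``paper proof'' to compare against. What you have supplied is a self-contained sketch of the standard Rademacher-style argument, and it is essentially correct at the level of outline. You correctly isolate the three ingredients that make the result go: hyperbolicity collapses the index iteration to $\lambda(\gamma^m)=m\lambda(\gamma)$ with $\nu(\gamma^m)=1$, giving a rigid arithmetic progression of degrees for each prime geodesic; the monogenic rational cohomology of $M$, via Vigu\'e-Poirrier--Sullivan, caps $b_k(\Lambda M;\mathbb{Q})$ and makes the cumulative Betti number $B(N)$ grow only linearly; and M\"obius inversion/Tauberian de-iteration converts a count with multiplicity $N(t)\sim t$ into the prime count $\pi(t)\sim t/\log t$. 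Your identification of the main technical difficulty --- tracking cancellations in the Morse complex and the orientation system $o_{\nu^-\Sigma_i}$ across iterates --- is also exactly where the real work lies in Rademacher's paper.

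Two caveats are worth flagging. First, the theorem as stated in the paper reads $\pi(x)=|\{\gamma\in\Lambda M: L[\gamma]\le x\}|$, which as literally written is uncountable and surely means the number of (geometrically distinct, $O(2)$-orbits of) prime closed geodesics of length $\le x$; your proposal implicitly uses the correct interpretation. Second, the claim of a genuine asymptotic equivalence $\sim$ is stronger than what the Morse-theoretic lower bound alone delivers --- the Morse inequalities most naturally give $\pi(x)\gtrsim x/\log x$ --- and securing the matching upper bound (i.e.\ the ``$\lesssim$'' direction) requires exactly the error control you acknowledge in your final paragraph. You are right that this is the pressure point; without it one should only assert the lower bound, which is in fact what much of the literature states. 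Since the paper provides no argument at all, your sketch is a genuine contribution, but before promoting $\gtrsim$ to $\sim$ you would need to make the min--max lower bound and the cancellation bookkeeping precise.
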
 Recall that the isotropy group at a non-constant closed geodesic is $\mathbb{Z}_m$ with $1/m$ the minimal period. Since $\mathbb{Z}_m,m\ge 2$ are $\mathbb{Q}$-acyclic, we have the following isomorphism on equivariant cohomology:
\begin{equation*} H_{\mathbb{Z}_m}^*(\Lambda M,M;\mathbb{Q})\cong H^*(\Lambda M/{\mathbb{Z}_m}, M;\mathbb{Q})
\end{equation*} where $H_{\mathbb{Z}_m}^*(\bullet):=H^*({\bullet}_{\mathbb{Z}_m})$ is $\mathbb{Z}_m$-equivariant cohomology.

The most significant consequence of $O(2)$-invariance for $E$ is that the $m$-fold iterate $\gamma_m$ of a closed geodesic adds to $H_{\bullet}^{O(2)}(\Lambda M)$ by $H_{\bullet}(B\mathbb{Z}_m)$. Thus \cite{brown}, 
\begin{align*}
       H_{\bullet}(B\mathbb{Z}_m;\mathbb{Z}) &=  \begin{cases} 
      \mathbb{Z}, & \text{if }\bullet=0, \\
      \mathbb{Z}_m, & \text{if} \hspace{.05 cm}\bullet\text{odd}, \\
      0, & \text{if }\bullet>0\text{ even}. 
   \end{cases}
\end{align*} By the universal coefficient theorem, we deduce that 
\begin{align*}
H_{\bullet}(B\mathbb{Z}_m;\mathbb{Z}_{\ell})&= \begin{cases} 
      \mathbb{Z}_{\ell}, & \text{if }\bullet=0, \\
      \mathbb{Z}_d, & \text{if }\bullet>0,
   \end{cases}
\end{align*} where $d=\text{gcd}(m,\ell)$, and 
\begin{align*}
H_{\bullet}(B\mathbb{Z}_m;\mathbb{Q})=  
\begin{cases} 
      \mathbb{Q}, & \text{if }\bullet=0, \\
      0, & \text{if }\bullet>0. 
   \end{cases}
\end{align*}

\section{Rational Homotopy Theory\label{sec: homotopytheory}}

An important consequence of rational homotopy theory is that every simply connected closed Riemannian manifold $M$ with rational cohomology not isomorphic to a truncated polynomial has infinitely many geometrically distinct prime closed geodesics.
\begin{definition}
Suppose $M,N$ are simply connected topological spaces. A continuous map $f:M\to N$ is called a rational homotopy equivalence if it induces an isomorphism on homotopy groups tensored with $\mathbb{Q}$.
\end{definition}
\begin{definition}
A rational space is a simply connected CW complex whose homotopy groups are vector spaces over $\mathbb{Q}$.
\end{definition}
\begin{definition}
The rationalization of a simply connected CW complex $M$ is the rational space $M_{\mathbb{Q}}$ for which the map from $M$ to $M_{\mathbb{Q}}$ descends to an isomorphism on rational homology, i.e. $\pi_k(M_{\mathbb{Q}})\cong\pi_k(M)\otimes\mathbb{Q}$ and, by the universal coefficient theorem, $H_k(M_{\mathbb{Q}},\mathbb{Z})\cong H_k(M,\mathbb{Z})\otimes\mathbb{Q}\cong H_k(M,\mathbb{Q})$ for all $k>0$.
\end{definition}
The rational cohomology ring $H^*(M,\mathbb{Q})$ is a graded commutative algebra and $\pi_*(M)\otimes\mathbb{Q}$ is a graded homotopy Lie algebra. Let $\Omega M$ be the $\textit{loop space}$ of $M$, then $\pi_*(\Omega M)\otimes\mathbb{Q}$ is also a graded Lie agebra by the Whitehead product structure. 
\begin{theorem}
The homology of loop space $H_{\bullet}(\Omega M,\mathbb{Q})$ is the (graded) universal enveloping algebra of the (graded) Lie algebra $\pi_{\bullet}(\Omega M,\mathbb{Q})\cong\pi_{\bullet-1}(M,\mathbb{Q})$.
\end{theorem}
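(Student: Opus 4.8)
The plan is to realize $H_{\bullet}(\Omega M;\mathbb{Q})$ as a connected graded Hopf algebra and then invoke the Milnor--Moore structure theorem together with the Cartan--Serre identification of its primitives. First I would note that since $M$ is simply connected, $\Omega M$ is path-connected, so $H_0(\Omega M;\mathbb{Q})=\mathbb{Q}$ and $H_{\bullet}(\Omega M;\mathbb{Q})$ is a connected graded $\mathbb{Q}$-algebra under the Pontryagin product induced by loop concatenation $\Omega M\times\Omega M\to\Omega M$, i.e. the topological-monoid structure on the based loop space. The diagonal $\Delta:\Omega M\to\Omega M\times\Omega M$ equips $H_{\bullet}(\Omega M;\mathbb{Q})$ with a coproduct making it a cocommutative Hopf algebra, the antipode arising from the homotopy inversion of loops. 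Since $M$ is a closed manifold, $H_{\bullet}(\Omega M;\mathbb{Q})$ is moreover of finite type, although this is not strictly needed.

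Next I would apply the Milnor--Moore theorem: any connected, graded, cocommutative Hopf algebra $A$ over a field of characteristic zero is canonically isomorphic, as a Hopf algebra, to the universal enveloping algebra $U(P(A))$ of its graded Lie algebra of primitive elements $P(A)$, with bracket the graded commutator. Taking $A=H_{\bullet}(\Omega M;\mathbb{Q})$ yields $H_{\bullet}(\Omega M;\mathbb{Q})\cong U\bigl(P H_{\bullet}(\Omega M;\mathbb{Q})\bigr)$ as Hopf algebras.

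It then remains to identify $P H_{\bullet}(\Omega M;\mathbb{Q})$ with $\pi_{\bullet}(\Omega M)\otimes\mathbb{Q}$. Here I would invoke the theorem of Cartan and Serre: for a connected H-space $X$ the rational Hurewicz homomorphism $h:\pi_{\bullet}(X)\otimes\mathbb{Q}\to H_{\bullet}(X;\mathbb{Q})$ is injective with image exactly the primitive subspace, so with $X=\Omega M$ we get $\pi_{\bullet}(\Omega M)\otimes\mathbb{Q}\cong P H_{\bullet}(\Omega M;\mathbb{Q})$; combined with the loop-space adjunction isomorphism $\pi_{\bullet}(\Omega M)\cong\pi_{\bullet-1}(M)$ this gives the asserted Lie algebra. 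One must also check that the Samelson product on $\pi_{\bullet}(\Omega M)$ induced by the commutator corresponds, under the adjunction and up to sign, to the Whitehead product on $\pi_{\bullet}(M)$, so that the Lie structures agree. The main obstacle is the Milnor--Moore and Cartan--Serre statements themselves; granting these as cited, the residual work is bookkeeping---verifying the Hopf-algebra axioms, deducing connectedness from $\pi_1(M)=0$, and matching the commutator/Samelson/Whitehead brackets, where the one point deserving care is the reconciliation of graded sign conventions, which I would fix once at the outset.
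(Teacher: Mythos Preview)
Your proposal is correct and follows the standard Milnor--Moore route. However, the paper does not actually prove this theorem: it is stated in Section~\ref{sec: homotopytheory} as a known result with no proof supplied, so there is no argument in the paper to compare against. What you have written is precisely the canonical proof sketch one would cite for this statement---Pontryagin product and diagonal give a connected cocommutative Hopf algebra over $\mathbb{Q}$, Milnor--Moore identifies it with $U(P)$, and Cartan--Serre identifies the primitives with $\pi_{\bullet}(\Omega M)\otimes\mathbb{Q}$, with the Samelson/Whitehead correspondence handling the bracket. Your caveats about sign conventions and the simple-connectivity hypothesis are appropriate; nothing is missing.
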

\begin{definition}
Let $M$ be a simply connected (CW complex) space for $H^*(M,\mathbb{Q})$ finite-dimensional over $\mathbb{Q}$. A simply connected space $M$ is said to be rationally elliptic if the homotopy Lie algebra $\pi_*(M)\otimes\mathbb{Q}$ is finite-dimensional over $\mathbb{Q}$, or else it is rationally hyperbolic.
\end{definition}

According to F\'elix and Halperin \cite{felix-halperin}, if $M$ is rationally hyperbolic, there exists an $\alpha>1$ and an integer $N$ such that for all $n\ge N$,
\begin{equation}
\sum_{i=1}^n\dim_{\mathbb{Q}}\pi_i(M)\otimes\mathbb{Q}\ge \alpha^n.
\end{equation}
\begin{remark}
Let $M$ be a simply connected closed Riemannian manifold. Then the rational cohomology ring $H^*(M;\mathbb{Q})$ requires more than one generator if and only if $\pi_{\text{odd}}(M)\otimes\mathbb{Q}$ is more than $1$-dimensional, i.e. $\dim\pi_{\text{odd}}(M)\otimes\mathbb{Q}>1$.
\end{remark}
\begin{proof}
A basis for rational homotopy $\pi_k(M)\otimes\mathbb{Q}$ corresponds to a set of algebra generators of the minimal model. Then the claim follows from Proposition 1 of Vigu\'e-Poirrier and Sullivan \cite{poirrier-sullivan}.
\end{proof}
\begin{corollary}
The rational space $M_{\mathbb{Q}}$ cannot be a $K(G,n)$ Eilenberg--MacLane space for $G$ singly generated if $\dim\pi_{\text{odd}}(M)\otimes\mathbb{Q}>1$.
\end{corollary}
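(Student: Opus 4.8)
The plan is to argue by contradiction, at the level of homotopy groups. Suppose $M_{\mathbb{Q}}\simeq K(G,n)$ with $G$ singly generated. By construction $M_{\mathbb{Q}}$ is a rational space, so each $\pi_k(M_{\mathbb{Q}})$ is a $\mathbb{Q}$-vector space; in particular $G\cong\pi_n(M_{\mathbb{Q}})$ is a $\mathbb{Q}$-vector space, and since it is generated by a single element it is either $0$ or isomorphic to $\mathbb{Q}$. Thus $\dim_{\mathbb{Q}}\pi_n(M_{\mathbb{Q}})\le 1$. (Equivalently, one may allow $G$ to be an arbitrary singly generated abelian group; then $\pi_n$ of the rationalization of $K(G,n)$ is $G\otimes\mathbb{Q}$, which is again at most one-dimensional.)

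Next I would transfer this back to $M$ via the defining property of rationalization, $\pi_k(M)\otimes\mathbb{Q}\cong\pi_k(M_{\mathbb{Q}})$ for all $k>0$. Since $K(G,n)$ has homotopy concentrated in degree $n$, the graded vector space $\pi_*(M)\otimes\mathbb{Q}$ vanishes outside degree $n$ and has dimension at most $1$ in degree $n$. Consequently $\dim_{\mathbb{Q}}\pi_{\text{odd}}(M)\otimes\mathbb{Q}$ is $0$ when $n$ is even and at most $1$ when $n$ is odd; in either case it is $\le 1$, contradicting the hypothesis $\dim\pi_{\text{odd}}(M)\otimes\mathbb{Q}>1$, and the corollary follows.

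One could instead route the argument through cohomology: if $M_{\mathbb{Q}}\simeq K(G,n)$ with $G$ singly generated, then $H^*(M;\mathbb{Q})\cong H^*(K(G,n);\mathbb{Q})$ is a free graded-commutative algebra on a single generator (exterior for $n$ odd, polynomial for $n$ even; trivial if $G$ is torsion), hence in particular a quotient of a polynomial ring in one variable. The preceding Remark, which rests on Proposition 1 of Vigu\'e-Poirrier and Sullivan \cite{poirrier-sullivan}, then yields $\dim\pi_{\text{odd}}(M)\otimes\mathbb{Q}\le 1$, the same contradiction. There is no genuine obstacle here; the only point that demands care is to fix the meaning of ``singly generated'' at the outset and to check that, under whichever convention one adopts, the rational homotopy in degree $n$ is at most one-dimensional — everything else is formal.
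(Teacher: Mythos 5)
Your proof is correct and follows essentially the same route as the paper's: assume $M_{\mathbb{Q}}\simeq K(G,n)$, use that an Eilenberg--MacLane space has rational homotopy concentrated in a single degree to conclude $\dim\pi_{\text{odd}}(M)\otimes\mathbb{Q}\le 1$, and contradict the hypothesis. Your version is in fact a bit tighter, since you actually justify $\dim_{\mathbb{Q}}G\le 1$ from the singly-generated hypothesis (rather than simply asserting $\dim\pi_i(M)\otimes\mathbb{Q}=\delta_{in}$) and you avoid the paper's circular-sounding closing sentence.
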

\begin{proof}
Suppose the rational space $M_{\mathbb{Q}}$ is a $K(G,n)$ Eilenberg--MacLane space. Then for $n$ a positive integer, $\pi_n(K(G,n))\cong G$, and $\pi_i(M_{\mathbb{Q}})=0$ for all $i\ne n$. Thus, $\dim\pi_i(M_{\mathbb{Q}})=\dim\pi_i(M)\otimes\mathbb{Q}=\delta_{in}$, which implies that $\dim\pi_{\text{odd}}(M)\otimes\mathbb{Q}\le 1$. It follows that $H^*(M;\mathbb{Q})$ is singly generated so $M_{\mathbb{Q}}\ne K(G,n)$, a contradiction. 
\end{proof} 
\begin{lemma}
If the rational cohomology ring $H^*(M;\mathbb{Q})$ is not singly generated, $M_{\mathbb{Q}}=M\times E\mathbb{Q}\ne K(G,n)$ for any positive integer $n$ or group $G$ acting freely on $M$.
\end{lemma}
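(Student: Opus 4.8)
The plan is to read $M_{\mathbb{Q}}$ as the rationalization of $M$ (the identification with ``$M\times E\mathbb{Q}$'' in the statement being purely notational) and to deduce the conclusion from the finiteness and Poincar\'e duality of $H^*(M;\mathbb{Q})$ for a \emph{closed} manifold, together with the hypothesis that $G$ acts freely, as a covering action, on the \emph{compact} manifold $M$. First I would record the structural facts: since $M$ is a positive-dimensional closed manifold, Poincar\'e duality makes $H^*(M;\mathbb{Q})$ a finite-dimensional graded-commutative $\mathbb{Q}$-algebra with $H^{\dim M}(M;\mathbb{Q})\cong\mathbb{Q}$, so $M$ is not rationally acyclic; and by the Remark, ``not singly generated'' is equivalent to $\dim_{\mathbb{Q}}\pi_{\text{odd}}(M)\otimes\mathbb{Q}>1$, which in particular reconfirms $H^*(M;\mathbb{Q})\ne\mathbb{Q}$.

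Next I would argue by contradiction. Suppose $M_{\mathbb{Q}}\cong K(G,n)$ for some integer $n\ge 1$ and some $G$ acting freely on $M$. Since $M$ is simply connected, $M_{\mathbb{Q}}$ is already rational, so $K(G,n)$ is rational; hence $G\cong\pi_n(M_{\mathbb{Q}})$ is a $\mathbb{Q}$-vector space and $\pi_i(M)\otimes\mathbb{Q}=\pi_i(K(G,n))=0$ for $i\ne n$. If $n$ is even, then $H^*(K(G,n);\mathbb{Q})$ is a polynomial algebra on $\dim_{\mathbb{Q}}G$ generators, hence infinite-dimensional unless $G=0$; combined with non-acyclicity this is impossible. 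If $n$ is odd, I would invoke the free action: it presents $M\to M/G$ as a covering map whose total space and base are both compact, so the fiber $G$ is finite; a finite $\mathbb{Q}$-vector space is $0$, whence $M_{\mathbb{Q}}=K(0,n)$ is contractible, contradicting $H^{\dim M}(M;\mathbb{Q})\cong\mathbb{Q}$. The preceding Corollary already settles the sub-case $\dim_{\mathbb{Q}}G\le 1$ (that is, $G$ singly generated), so the genuinely new ingredient is the covering-space observation that upgrades this to arbitrary $G$.

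I expect the crux to be exactly the case $n$ odd with $\dim_{\mathbb{Q}}G\ge 2$, and I would stress that the hypothesis ``$H^*(M;\mathbb{Q})$ not singly generated'' does \emph{not} rule it out on its own: for $n$ odd one has $H^*(K(\mathbb{Q}^r,n);\mathbb{Q})=\Lambda(x_1,\dots,x_r)$ with $|x_i|=n$, a finite-dimensional Poincar\'e duality algebra with $r$ generators, realized as the rational cohomology of a product of $r$ copies of $S^n$ (for instance $H^*(S^3\times S^3;\mathbb{Q})$), so it satisfies the hypothesis while still being the rational cohomology of an honest closed manifold. It is the freeness of the $G$-action on the \emph{compact} $M$ that forces $G$ finite, hence $M_{\mathbb{Q}}$ rationally trivial, and thereby excludes this last family; assembling the even and odd cases then gives $M_{\mathbb{Q}}\ne K(G,n)$, as asserted.
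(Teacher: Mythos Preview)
The paper gives no proof of this lemma: it is stated immediately after the Corollary on singly generated $G$ and is followed directly by the statement of the Hurewicz Theorem, with no intervening argument. So there is nothing in the paper to compare against; you are supplying reasoning the paper omits entirely.

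Your even-$n$ case is correct and in fact needs neither the cohomological hypothesis nor the free-action hypothesis: a closed manifold has finite-dimensional rational cohomology, while $H^*(K(\mathbb{Q}^r,n);\mathbb{Q})$ is polynomial for $n$ even and $r\ge 1$. The content lies in the odd-$n$ case, and you have correctly isolated the crux: the hypothesis that $H^*(M;\mathbb{Q})$ is not singly generated does \emph{not} by itself exclude $M_{\mathbb{Q}}\simeq K(\mathbb{Q}^r,n)$ with $n$ odd and $r\ge 2$. Your fix is to read ``$G$ acts freely on $M$'' as a covering action, so that compactness of $M$ forces the fibre $G$ to be finite, hence (being a $\mathbb{Q}$-vector space) zero.

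Under that reading your argument is sound, but you should be explicit that the added proper-discontinuity is doing essential work and is not merely interpretive. Under the \emph{literal} reading of ``free action'' the lemma is false: take $M=S^3\times S^3$, so that $M_{\mathbb{Q}}\simeq K(\mathbb{Q}^2,3)$ and $H^*(M;\mathbb{Q})=\Lambda(x_1,x_2)$ requires two generators; then $G=\mathbb{Q}^2$ acts freely on $M$ via any injection $\mathbb{Q}^2\hookrightarrow S^1\times S^1$ followed by the product Hopf action of $(S^1)^2$ on $(S^3)^2$. This action is free but not properly discontinuous, so your covering argument does not apply, and the triple $(M,G,n)=(S^3\times S^3,\mathbb{Q}^2,3)$ satisfies every hypothesis of the lemma as stated while violating its conclusion. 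In short: your proof is correct for the strengthened hypothesis you introduce, and that strengthening is necessary, not optional.
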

\begin{theorem}[Hurewicz Theorem]\label{hurewicz}
If $X$ is an $(k-1)$-connected topological space, then for a positive integer $n$, the group homomorphism
\begin{equation*}
f_*:\pi_n(X)\to H_n(X)
\end{equation*} is an isomorphism for all $n\le k$ and an abelianization for $n=1$.
\end{theorem}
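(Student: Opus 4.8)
The plan is to prove this by the classical cellular induction, after two standard reductions. First, reduce to the case that $X$ is a CW complex: by CW approximation there is a CW complex $X'$ and a weak homotopy equivalence $X'\to X$, and since weak equivalences induce isomorphisms on every $\pi_n$ and every $H_n$, while the Hurewicz map $h$ is natural, it suffices to treat $X'$. Second, replace $X'$ by a homotopy equivalent CW complex with a single $0$-cell and no cells in dimensions $1,\dots,k-1$; this is possible precisely because $X$ is $(k-1)$-connected (build a minimal CW model by attaching cells to kill the vanishing homotopy groups, or cite the standard construction). For $k\ge 2$ this model makes the argument go through cleanly, and the case $n=1$ I would dispatch separately at the end.

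Now one argues degree by degree. Since the $(k-1)$-skeleton is a point, the $k$-skeleton is a wedge of $k$-spheres, $X^{(k)}=\bigvee_\alpha S^k$. The crucial input is that for $k\ge 2$ both $\pi_k$ and $H_k$ of a wedge of $k$-spheres are free abelian on the set of inclusions $S^k\hookrightarrow\bigvee_\alpha S^k$, and that $h$ carries a generator of $\pi_k(S^k)\cong\mathbb{Z}$ to a generator of $H_k(S^k)\cong\mathbb{Z}$; hence $h\colon\pi_k(X^{(k)})\to H_k(X^{(k)})$ is an isomorphism. The statement that $\pi_k$ of the wedge is free abelian on the inclusions follows from the fact that the inclusion $\bigvee_\alpha S^k\hookrightarrow\prod_\alpha S^k$ is $(2k-1)$-connected, a consequence of the Blakers--Massey excision theorem, together with $\pi_k(S^k)\cong\mathbb{Z}$. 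At the same time $H_i(X^{(k)})=0$ for $0<i<k$, and since $H_i(X)=H_i(X^{(i+1)})$ with $i+1\le k$, this gives the vanishing $H_i(X)=0$ for $0<i<k$.

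Next pass from $X^{(k)}$ to $X^{(k+1)}$ by attaching $(k+1)$-cells along maps $\varphi_\beta\colon S^k\to X^{(k)}$. On homology the cellular chain complex gives $H_k(X^{(k+1)})\cong H_k(X^{(k)})/\langle(\varphi_\beta)_*[S^k]\rangle$; on homotopy the long exact sequence of the pair $(X^{(k+1)},X^{(k)})$, together with the fact that $\pi_{k+1}(X^{(k+1)},X^{(k)})$ is generated as a $\pi_1$-module (and here $\pi_1$ is trivial) by the characteristic maps, gives $\pi_k(X^{(k+1)})\cong\pi_k(X^{(k)})/\langle[\varphi_\beta]\rangle$. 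Naturality of $h$ and the previous step then force $h\colon\pi_k(X^{(k+1)})\to H_k(X^{(k+1)})$ to be an isomorphism. Finally, attaching cells of dimension $\ge k+2$ alters neither $\pi_k$ (by cellular approximation) nor $H_k$ (by the cellular chain complex), so $h\colon\pi_k(X)\to H_k(X)$ is an isomorphism. For $n=1$ the assertion is the classical fact that $H_1(X;\mathbb{Z})\cong\pi_1(X)^{\mathrm{ab}}$ for path-connected $X$, provable by the same cell-attachment method with van Kampen's theorem in place of the homotopy long exact sequence, or simply cited. The main obstacle is the linchpin of the second paragraph: computing $\pi_k\big(\bigvee_\alpha S^k\big)$ and identifying $h$ on it is the one place where a genuine tool — excision/Blakers--Massey, or equivalently a bootstrap through the relative Hurewicz theorem — is required rather than formal manipulation of exact sequences and cellular chains.
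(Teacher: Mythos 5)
The paper states Theorem \ref{hurewicz} as a classical result and supplies no proof, so there is nothing internal to compare your argument against; I evaluate it on its own terms. Your cellular sketch follows the standard textbook route and is essentially correct: CW-approximate, pass to a model whose $(k-1)$-skeleton is a point so that $X^{(k)}$ is a wedge of $k$-spheres, compute $\pi_k$ and $H_k$ there, track the effect of attaching $(k+1)$-cells, and observe that higher cells change neither group. One point of imprecision: you flag a single ``linchpin'' (the computation of $\pi_k$ of a wedge of $k$-spheres via Blakers--Massey), but your third paragraph uses a second instance of the same nontrivial input. The claim that $\pi_{k+1}\bigl(X^{(k+1)},X^{(k)}\bigr)$ is generated (as a $\pi_1$-module, trivially here) by the classes of the characteristic maps of the $(k+1)$-cells is itself a relative Hurewicz or homotopy-excision statement, not a formal consequence of the long exact sequence of the pair; without it the long exact sequence only tells you that the image of the boundary map is \emph{contained in} the subgroup generated by the $[\varphi_\beta]$. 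Both uses can be bundled into one appeal to the Blakers--Massey theorem, but the write-up should say explicitly that excision is invoked twice (or once, applied to the pair $(X^{(k+1)},X^{(k)})$ directly). A second small point: for an infinite wedge you should state that $\pi_k\bigl(\bigvee_\alpha S^k\bigr)$ is computed as a colimit over finite sub-wedges before citing the $(2k-1)$-connectivity of $\bigvee S^k\hookrightarrow\prod S^k$, since the product of infinitely many spheres is not a CW complex in the usual sense. These are expository rather than mathematical gaps; the overall plan is sound and is the standard proof.
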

\begin{definition}
Let $G$ be an abelian group. The group $G \otimes_{\mathbb{Z}} \mathbb{Q}$ is the divisible hull of $G$. The dimension of  $G \otimes_{\mathbb{Z}} \mathbb{Q}$ over $\mathbb{Q}$ is the rational rank of $G$. We denote by $\text{rank }_{\mathbb{Q}}(G)$ the rational rank of $G$. If $G$ is torsion free then the map $G\to G\otimes\mathbb{Q}$ is injective and the rank of $G$ is the minimum dimension of the $\mathbb{Q}$-vector space containing $G$ as an abelian subgroup.
\end{definition} 
\begin{lemma}\label{rathyperbolic}
If $M$ is a rationally hyperbolic manifold then $\dim_{\mathbb{Q}}\pi_1(M)\otimes\mathbb{Q}\ge 2$ and $\dim_{\mathbb{Q}}\pi_k(M)\otimes\mathbb{Q}>\alpha^{k-1}$ for an integer $k>1$ and a real number $\alpha>1$. Moreover, $\dim_{\mathbb{Q}}\pi_{\text{odd}}(M)\otimes\mathbb{Q}>1$ so the rational cohomology ring $H^*(M;\mathbb{Q})$ has at least two generators.
\end{lemma}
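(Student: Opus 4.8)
The statement is, at bottom, a repackaging of the Félix--Halperin growth estimate recalled above together with the Remark relating the number of algebra generators of $H^{*}(M;\mathbb{Q})$ to $\dim_{\mathbb{Q}}\pi_{\text{odd}}(M)\otimes\mathbb{Q}$, and the plan is to carry out that repackaging. Since $M$ is rationally hyperbolic, $\pi_{*}(M)\otimes\mathbb{Q}$ is infinite-dimensional as a graded $\mathbb{Q}$-vector space, and Félix--Halperin supplies a real number $\alpha_{0}>1$ and an integer $N$ with $\sum_{i=1}^{n}\dim_{\mathbb{Q}}\pi_{i}(M)\otimes\mathbb{Q}\ge\alpha_{0}^{\,n}$ for all $n\ge N$.

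Both displayed inequalities will come out of one pigeonhole step. Fix any $\alpha$ with $1<\alpha<\alpha_{0}$ and suppose, toward a contradiction, that $\dim_{\mathbb{Q}}\pi_{i}(M)\otimes\mathbb{Q}\le\alpha^{\,i-1}$ for every $i\ge 1$. Then for $n\ge N$,
\begin{equation*}
\alpha_{0}^{\,n}\ \le\ \sum_{i=1}^{n}\dim_{\mathbb{Q}}\pi_{i}(M)\otimes\mathbb{Q}\ \le\ \sum_{i=1}^{n}\alpha^{\,i-1}\ <\ \frac{\alpha^{\,n}}{\alpha-1},
\end{equation*}
whence $(\alpha_{0}/\alpha)^{\,n}<(\alpha-1)^{-1}$ for all $n\ge N$, which is absurd since $\alpha_{0}/\alpha>1$. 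Hence there is some $k$ with $\dim_{\mathbb{Q}}\pi_{k}(M)\otimes\mathbb{Q}>\alpha^{\,k-1}$, and $k>1$ because $\pi_{1}(M)=0$; since the right-hand side exceeds $1$ and the left-hand side is a non-negative integer, one moreover has $\dim_{\mathbb{Q}}\pi_{k}(M)\otimes\mathbb{Q}\ge 2$. Reading the first inequality of the lemma at this lowest relevant degree, both assertions follow (with this $\alpha$ in place of $\alpha_{0}$).

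For the ``Moreover'' I would argue by contradiction, using the Remark in both directions together with the rational ellipticity/hyperbolicity dichotomy. If $\dim_{\mathbb{Q}}\pi_{\text{odd}}(M)\otimes\mathbb{Q}\le 1$, then by the Remark $H^{*}(M;\mathbb{Q})$ is generated by at most one element; being finite-dimensional and graded commutative it is then $\mathbb{Q}$, a truncated polynomial algebra $\mathbb{Q}[x]/(x^{m+1})$ with $|x|$ even, or $\Lambda(x)$ with $|x|$ odd, so $M_{\mathbb{Q}}$ has the rational homotopy type of a point, of $\mathbb{CP}^{m}$ (or an $|x|$-analogue such as $\mathbb{HP}^{m}$), or of an odd-dimensional sphere --- each of which is rationally elliptic. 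This contradicts the hypothesis, so $\dim_{\mathbb{Q}}\pi_{\text{odd}}(M)\otimes\mathbb{Q}>1$, and one last application of the Remark shows that $H^{*}(M;\mathbb{Q})$ requires at least two algebra generators. I expect no serious obstacle here; the two places that want a little care are shrinking $\alpha$ strictly below the Félix--Halperin exponent before the single-degree estimate can be extracted, and the identification of the minimal Sullivan model --- equivalently, the rational homotopy type --- of a singly generated cohomology algebra, for which one may simply invoke Vigué-Poirrier--Sullivan as in the Remark.
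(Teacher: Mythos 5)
Your proof is correct and takes a fundamentally different route from the paper's. The paper applies the F\'elix--Halperin inequality $\sum_{i=1}^n\dim_{\mathbb{Q}}\pi_i(M)\otimes\mathbb{Q}\ge\alpha^n$ at $n=1,2,3$, but that inequality is only guaranteed for $n\ge N$ with $N$ unspecified, so the step ``If $n=1$ then $\dim_{\mathbb{Q}}\pi_1(M)\otimes\mathbb{Q}\ge\alpha$'' is not legitimate; the paper then inserts ad hoc assumptions (``Assume $\dim_{\mathbb{Q}}\pi_1(M)\otimes\mathbb{Q}=\alpha$,'' ``Let $\dim_{\mathbb{Q}}\pi_2(M)\otimes\mathbb{Q}=\alpha^2-\alpha$'') that are not consequences of anything, no actual induction is performed, and the would-be universal conclusion $\dim_{\mathbb{Q}}\pi_k(M)\otimes\mathbb{Q}>\alpha^{k-1}$ for \emph{every} $k$ is false in general (e.g.\ $\#^2(S^3\times S^3)$ is simply connected and rationally hyperbolic with $\pi_2\otimes\mathbb{Q}=0$). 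Your pigeonhole argument --- shrink $\alpha$ strictly below the F\'elix--Halperin constant $\alpha_0$, bound the geometric series by $\alpha^n/(\alpha-1)$, and derive the contradiction $(\alpha_0/\alpha)^n<(\alpha-1)^{-1}$ for all large $n$ --- is the right way to extract a single-degree estimate, and it proves the existential form, which is the strongest version that is actually true. You are also right to flag that $\pi_1(M)=0$ for any space to which the hyperbolic/elliptic dichotomy applies, so the lemma's first assertion $\dim_{\mathbb{Q}}\pi_1(M)\otimes\mathbb{Q}\ge 2$ cannot be correct as literally stated. Finally, the paper's proof stops short and never derives the ``moreover'' $\dim_{\mathbb{Q}}\pi_{\text{odd}}(M)\otimes\mathbb{Q}>1$; your contradiction argument --- a singly generated finite-dimensional $H^*(M;\mathbb{Q})$ forces the rational homotopy type of a point, a sphere, or a projective space, all rationally elliptic, contradicting hyperbolicity --- correctly supplies the missing step.
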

\begin{proof}
If $M$ is rationally hyperbolic, there exists a real number $\alpha>1$ and an integer $N$ such that 
\begin{equation*}
\sum_{i=1}^n\dim_{\mathbb{Q}}\pi_i(M)\otimes\mathbb{Q}\ge \alpha^n
\end{equation*} for all $n\ge N$. If $n=1$ then $\dim_{\mathbb{Q}}\pi_1(M)\otimes\mathbb{Q}\ge \alpha$ for $\alpha>1$ so $\dim_{\mathbb{Q}}\pi_1(M)\otimes\mathbb{Q}>1$. If $n=2$ then $\dim_{\mathbb{Q}}\pi_1(M)\otimes\mathbb{Q}+\dim_{\mathbb{Q}}\pi_2(M)\otimes\mathbb{Q}\ge \alpha^2$. Assume $\dim_{\mathbb{Q}}\pi_1(M)\otimes\mathbb{Q}= \alpha$ so that $\dim_{\mathbb{Q}}\pi_2(M)\otimes\mathbb{Q}\ge \alpha^2-\alpha$. If $n=3$ then $\dim_{\mathbb{Q}}\pi_1(M)\otimes\mathbb{Q}+\dim_{\mathbb{Q}}\pi_2(M)\otimes\mathbb{Q}+\dim_{\mathbb{Q}}\pi_3(M)\otimes\mathbb{Q}\ge \alpha^3$. Let $\dim_{\mathbb{Q}}\pi_2(M)\otimes\mathbb{Q}=\alpha^2-\alpha$ so $\alpha+(\alpha^2-\alpha)+\dim_{\mathbb{Q}}\pi_3(M)\otimes\mathbb{Q}\ge \alpha^3$ or $\dim_{\mathbb{Q}}\pi_3(M)\otimes\mathbb{Q}\ge \alpha^3-\alpha^2$. It follows by induction that $\dim_{\mathbb{Q}}\pi_k(M)\otimes\mathbb{Q}\ge \alpha^k-\alpha^{k-1}$ for $\alpha>1$. More generally, $\dim_{\mathbb{Q}}\pi_k(M)\otimes\mathbb{Q}\ge \alpha^{k-1}(\alpha-1)>\alpha^{k-1}$ or $\dim_{\mathbb{Q}}\pi_k(M)\otimes\mathbb{Q}> \alpha^{k-1}$ for $\alpha>1$. As per the above, for $k=1$, it is already known that $\dim_{\mathbb{Q}}\pi_1(M)\otimes\mathbb{Q}>1$. 
\end{proof}

Another program for determining the dimension of $\pi_i(M)\otimes\mathbb{Q}$ is by way of minimal models. If $M$ is a closed, simply connected smooth manifold, then the rank of the rational homotopy groups $\pi_i(M)\otimes\mathbb{Q}$ equals the number of degree $i$ generators introduced in the construction of the minimal model for $M$. For manifolds that are $\textit{formal}$, e.g. if $M$ is K\"{a}hler, their cohomology ring with trivial differential is quasi-isomorphic as a differential graded algebra to the minimal model. Thus, we can use the cohomology ring to determine the rank of rational homotopy groups.

\begin{theorem}\label{betti-inequality}
Let $E:\Lambda M\to\mathbb{R}$ be a Morse function on the free loop space of Sobolev class $H^1$. Let $C_{\lambda}$ denote the number of critical points of the energy functional $E$ of index $\lambda$. Then we have the following constraints:
\begin{equation*}
\begin{split}
& \text{rank } H_{\lambda}(\Lambda M)=b_{\lambda}\le C_{\lambda} \text{ for each } \lambda,\\ &
\chi(\Lambda M)=\sum_{\lambda}(-1)^{\lambda}C_{\lambda}.
\end{split}
\end{equation*}
\end{theorem}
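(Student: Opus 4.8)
The plan is to upgrade the handle decomposition of Theorem~\ref{criticalmorse} and the corollaries following it to a genuine CW structure on $\Lambda M$, and then read both assertions off the resulting cellular chain complex, exactly as in finite-dimensional Morse theory. First I would list the critical values $0=c_0<c_1<c_2<\cdots$, which are isolated and discrete by Theorem~\ref{criticalmorse}(i), and choose regular values $a_j\in(c_j,c_{j+1})$. Since $E$ is assumed to be a Morse function, each connected component of $\mathrm{Crit}(E)$ at a level $c_j$ is a single point $p$, its negative bundle $\nu^-\{p\}$ is a vector space, and the attached disk bundle $D\nu^-\{p\}$ is a single $\lambda(p)$-cell glued along $\partial D\nu^-\{p\}=S^{\lambda(p)-1}$ by the embedding $\phi_p$. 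Thus the cell-attachment clause of Theorem~\ref{criticalmorse}(iv), together with the deformation retractions in the corollaries, shows that $\Lambda^{\le a_j}$ is homotopy equivalent to $\Lambda^{\le a_{j-1}}$ with exactly one $\lambda$-cell attached for every critical point of index $\lambda$ at level $c_j$; by the cellular approximation theorem the attaching maps may be taken to be cellular. Passing to the direct limit $\Lambda M=\varinjlim_a\Lambda^{\le a}$ yields a CW complex, homotopy equivalent to $\Lambda M$, whose number of $\lambda$-cells is precisely $C_\lambda$.

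Second, let $(C_\bullet,\partial_\bullet)$ be the associated cellular chain complex over a field (or over $\mathbb{Z}$), so that $C_\lambda$ is free of rank $C_\lambda$ and $H_\bullet(C_\bullet)\cong H_\bullet(\Lambda M)$. Writing $Z_\lambda=\ker\partial_\lambda$ and $B_\lambda=\mathrm{im}\,\partial_{\lambda+1}$, the weak Morse inequality is pure linear algebra:
\begin{equation*}
b_\lambda=\mathrm{rank}\,H_\lambda(\Lambda M)=\mathrm{rank}\,Z_\lambda-\mathrm{rank}\,B_\lambda\le\mathrm{rank}\,Z_\lambda\le\mathrm{rank}\,C_\lambda=C_\lambda .
\end{equation*}
For the Euler characteristic I would apply rank--nullity to each boundary map, $C_\lambda=\mathrm{rank}\,Z_\lambda+\mathrm{rank}\,B_{\lambda-1}$, and take the alternating sum; the $\mathrm{rank}\,B$ contributions cancel in consecutive pairs, leaving
\begin{equation*}
\sum_\lambda(-1)^\lambda C_\lambda=\sum_\lambda(-1)^\lambda\bigl(\mathrm{rank}\,Z_\lambda+\mathrm{rank}\,B_{\lambda-1}\bigr)=\sum_\lambda(-1)^\lambda\,\mathrm{rank}\,H_\lambda(\Lambda M)=\chi(\Lambda M).
\end{equation*}
Equivalently, and closer to the filtration language used in the excerpt, one can bypass the global CW structure and induct over the pairs $(\Lambda^{\le c_j},\Lambda^{<c_j})$, using the Morse specialization of item~(iii) above, namely $H_\bullet(\Lambda^{\le c_j},\Lambda^{<c_j})\cong\bigoplus_i\mathbb{F}$ concentrated in degrees $\lambda_i$, combined with subadditivity of Betti numbers and additivity of Euler characteristic across a short exact pair.

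I expect the only genuine difficulty to be analytic bookkeeping, since $\Lambda M$ is infinite-dimensional and a priori may carry infinitely many closed geodesics. The remedy is that condition~(C) of Palais and Smale (Theorem~\ref{Palais-Smale}) together with Theorem~\ref{criticalmorse}(i)--(ii) forces each sublevel set $\Lambda^{\le a_j}$ to contain only finitely many critical points and the negative gradient flow to be complete, so every inductive step is the classical finite-dimensional one and each $\Lambda^{\le a_j}$ is a finite CW complex; since singular homology commutes with the direct limit over $j$, both identities descend to $\Lambda M$ degree by degree, with the convention that if some $C_\lambda$ or $\chi(\Lambda M)$ is infinite the corresponding relation is an equality or inequality of cardinals. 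A minor additional point is the base of the induction, where the lowest sublevel sets must be treated directly so that the cells below the first nonconstant critical value seed the complex correctly.
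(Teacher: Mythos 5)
Your proposal takes essentially the same route as the paper: both build a filtration $\Lambda^{\le a_0}\subset\Lambda^{\le a_1}\subset\cdots$ by sublevel sets between consecutive critical values, treat each crossing of a critical value as attaching handles/cells of the appropriate index, and deduce the weak Morse inequality from the resulting chain-level data. The paper phrases the inequality step as sub-additivity of $\operatorname{rank}H_k$ along the filtration (equivalently, repeated use of the long exact sequence of pairs), whereas you pass explicitly to the cellular chain complex $(C_\bullet,\partial_\bullet)$ and apply rank--nullity; these are two presentations of the same argument. Your write-up is actually more complete than the paper's in one respect: the paper states the Euler characteristic identity $\chi(\Lambda M)=\sum_\lambda(-1)^\lambda C_\lambda$ but only proves the inequality $b_\lambda\le C_\lambda$, while you supply the telescoping rank--nullity cancellation that gives the Euler characteristic as well. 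You also flag the genuine caveat about the infinite-dimensional setting and the need to pass to the direct limit, which the paper handles somewhat loosely (it writes $D^{\dim\Lambda M}$ and assumes a finite list $c_1<\cdots<c_n$); in both cases the Euler characteristic equality is only meaningful under finiteness hypotheses on the $C_\lambda$, and you are right to note that.
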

\begin{proof}
Let $\Lambda_1$ be obtained from $\Lambda_0$ by attaching a handle of index $\lambda$ for $M_0\subset M_1$, then $H_k(\Lambda_1,\Lambda_0)=H_k(D^{\lambda},S^{\lambda-1})$. Thus, $H_k(\Lambda_1,\Lambda_0)=\mathbb{Z}$ if $k=\lambda$ and $0$ otherwise. Denote by $c_1<c_2<\dots<c_n$ the critical values of $E$. Consider the sequence of real numbers $\{a_i\}$ such that $c_i<a_i<c_{i+1}$ for $0\le i\le n-1$, and let $\Lambda_i=\Lambda^{\le a_i}$. Thus, we obtain the inclusions of an increasing sequence of manifolds with boundary, $D^{\dim \Lambda M}=\Lambda_0\subset \Lambda_1\subset\Lambda_2\subset\dots\subset\Lambda_n=\Lambda M$, where $\Lambda_{i+1}$ is $\Lambda_i$ with finitely many cells attached for each critical point in the set $\Lambda^{[a_i,a_{i+1}]}$. Since homology groups $H_k$ are sub-additive, it follows that 
\begin{equation*}
\text{rank } H_k(\Lambda M;\mathbb{Z})\le\sum_i\text{rank } H_k(\Lambda_{i+1},\Lambda_i)
\end{equation*} so the rank of $H_k(\Lambda M;\mathbb{Z})$ is less than or equal to the number of critical points of index $k$.
\end{proof}

Suppose $\Lambda M$ is elliptic with top non-zero rational homology group in degree $n$. Then the Euler characteristic of the infinite-dimensional CW complex $\Lambda M$ is $\chi(\Lambda M)=\sum_{\lambda=0}^{\infty}(-1)^{\lambda}C_{\lambda}$ and $b_{\lambda}\le C_{\lambda}$. Furthermore, $b_{\lambda}\le{n\choose \lambda}$ so $C_{\lambda}={n\choose \lambda}$ as demonstrated in the following lemma.
\begin{lemma}
If $\Lambda M$ is an elliptic space with top non-zero rational cohomology in degree $n$, then the rational cohomology ring $H^*(\Lambda M;\mathbb{Q})$ is the complete intersection ring and does not have at least two generators because the sequence of Betti numbers $\{b_{\lambda}(\Lambda M;\mathbb{Q})\}_{\lambda\ge 0}$ is bounded by  $\{{n\choose\lambda}\}_{\lambda\ge 0}$.
\end{lemma}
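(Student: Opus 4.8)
The plan is to compute with a minimal Sullivan model of $\Lambda M$ and to read off both assertions from the structure theory of rationally elliptic spaces. Since $\Lambda M$ is elliptic, its minimal model $(\Lambda V,d)$ has $V=V^{\mathrm{even}}\oplus V^{\mathrm{odd}}$ finite-dimensional and $H^{*}(\Lambda V,d)\cong H^{*}(\Lambda M;\mathbb{Q})$ finite-dimensional with top nonzero degree $n$ by hypothesis. First I would identify this cohomology ring, up to an exterior factor, with a graded complete intersection; then I would show that its Poincar\'e polynomial $P(t)=\sum_{\lambda}b_{\lambda}(\Lambda M;\mathbb{Q})\,t^{\lambda}$ is dominated term by term by $(1+t)^{n}$, so that extracting the coefficient of $t^{\lambda}$ gives $b_{\lambda}\le\binom{n}{\lambda}$.

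The first step is a reduction to a pure model. Filtering $(\Lambda V,d)$ by word length in $V^{\mathrm{odd}}$ produces the associated pure Sullivan algebra $(\Lambda V,d_{\sigma})$, with $d_{\sigma}|_{V^{\mathrm{even}}}=0$ and $d_{\sigma}(V^{\mathrm{odd}})\subseteq\Lambda V^{\mathrm{even}}$, together with a spectral sequence whose $E_{1}$-page is $H^{*}(\Lambda V,d_{\sigma})$ and which converges to $H^{*}(\Lambda V,d)$. By the structure theorem for rationally elliptic spaces (Friedlander--Halperin; see \cite{felix-halperin}) the pure model is again elliptic of the same formal dimension $n$, and since each spectral sequence differential can only drop dimensions, $b_{\lambda}(\Lambda M;\mathbb{Q})\le\dim_{\mathbb{Q}}H^{\lambda}(\Lambda V,d_{\sigma})$ for every $\lambda$. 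It thus suffices to treat the pure model. Picking homogeneous bases $x_{1},\dots,x_{p}$ of $V^{\mathrm{even}}$ and $y_{1},\dots,y_{q}$ of $V^{\mathrm{odd}}$, ellipticity forces $p\le q$, and after a change of basis in $V^{\mathrm{odd}}$ the elements $d_{\sigma}y_{1},\dots,d_{\sigma}y_{p}\in\mathbb{Q}[x_{1},\dots,x_{p}]$ form a regular sequence with Artinian quotient
\begin{equation*}
R:=\mathbb{Q}[x_{1},\dots,x_{p}]\big/\bigl(d_{\sigma}y_{1},\dots,d_{\sigma}y_{p}\bigr),
\end{equation*}
while $d_{\sigma}y_{j}$ lies in this ideal for $j>p$; one then obtains $H^{*}(\Lambda V,d_{\sigma})\cong R\otimes\Lambda(y_{p+1},\dots,y_{q})$. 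When $p=q$ (the $F_{0}$ case) this is the complete intersection $R$ itself, and in general it is $R$ tensored with an exterior algebra on the surplus odd classes --- the structural assertion of the lemma. Its Poincar\'e polynomial is
\begin{equation*}
P(t)=\prod_{i=1}^{p}\frac{1-t^{\,\deg d_{\sigma}y_{i}}}{1-t^{\,\deg x_{i}}}\;\cdot\;\prod_{j=p+1}^{q}\bigl(1+t^{\,\deg y_{j}}\bigr),
\end{equation*}
a polynomial of degree $n=\sum_{i\le p}\bigl(\deg d_{\sigma}y_{i}-\deg x_{i}\bigr)+\sum_{j>p}\deg y_{j}$.

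It remains to bound $P(t)$ coefficient by coefficient by $(1+t)^{n}$. Each exterior factor satisfies $1+t^{\,\deg y_{j}}\le(1+t)^{\,\deg y_{j}}$ coefficientwise, and these factors already display the extremal behaviour: when $R=\mathbb{Q}$ the claim reduces to the combinatorial inequality $\#\{\,S\subseteq\{p+1,\dots,q\}:\sum_{j\in S}\deg y_{j}=\lambda\,\}\le\binom{n}{\lambda}$, which holds because choosing a family of odd generators whose degrees sum to $\lambda$ is one particular way of selecting $\lambda$ of the $n=\sum_{j}\deg y_{j}$ available slots. For the complete-intersection factor one needs $\operatorname{Hilb}_{R}(t)$ to be dominated coefficientwise by $(1+t)^{s}$, where $s$ is the socle degree of $R$; multiplying this with the estimate for the exterior factors then gives $P(t)\le(1+t)^{n}$ coefficientwise, whence $b_{\lambda}(\Lambda M;\mathbb{Q})=[P(t)]_{\lambda}\le\binom{n}{\lambda}$ for all $\lambda$. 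In particular the sequence $\{b_{\lambda}(\Lambda M;\mathbb{Q})\}_{\lambda\ge0}$ is bounded (by $\binom{n}{\lfloor n/2\rfloor}$), so the unboundedness hypothesis of Theorem \ref{Gromoll/Meyer}, equivalently the Vigu\'e-Poirrier--Sullivan criterion \cite{poirrier-sullivan}, cannot be met: $H^{*}(\Lambda M;\mathbb{Q})$ is a complete intersection and does not have at least two generators.

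The main obstacle is precisely the estimate $\operatorname{Hilb}_{R}(t)\le(1+t)^{s}$. The tempting shortcut --- pairing each $x_{i}$ with a relation whose degree is a multiple of $\deg x_{i}$, so that every factor $\frac{1-t^{\deg d_{\sigma}y_{i}}}{1-t^{\deg x_{i}}}$ becomes a polynomial with coefficients in $\{0,1\}$ and is trivially dominated by $(1+t)^{\deg d_{\sigma}y_{i}-\deg x_{i}}$ --- is not always available even among Artinian graded complete intersections: for instance $\mathbb{Q}[x_{1},x_{2}]/(x_{1}^{3}+x_{2}^{2},\,x_{1}^{2}x_{2})$ with $\deg x_{1}=4$ and $\deg x_{2}=6$ has relation degrees $12$ and $14$, and $14$ is divisible by neither $4$ nor $6$. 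Hence this step must be drawn from the Friedlander--Halperin structure theory --- in practice one invokes the known bound $b_{\lambda}(X;\mathbb{Q})\le\binom{N}{\lambda}$ for a rationally elliptic space $X$ of formal dimension $N$ --- and it is here that the ellipticity hypothesis does all of the work.
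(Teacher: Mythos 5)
The paper itself gives no proof of this lemma: it appears as a bare assertion, and the sentence immediately preceding it is circular (it justifies $b_{\lambda}\le\binom{n}{\lambda}$ by pointing to the lemma, while the lemma's ``because'' clause cites that same bound). So your proof cannot be compared against the paper's --- it is an original attempt, and the most I can do is check it on its own terms.

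Your framework is the right one, and the pieces you carry out are correct: the passage to the pure Sullivan algebra $(\Lambda V,d_{\sigma})$ with $b_{\lambda}(\Lambda M;\mathbb{Q})\le\dim_{\mathbb{Q}}H^{\lambda}(\Lambda V,d_{\sigma})$; the identification $H^{*}(\Lambda V,d_{\sigma})\cong R\otimes\Lambda(y_{p+1},\dots,y_{q})$; and the coefficientwise bound $\prod_{j}(1+t^{\deg y_{j}})\le(1+t)^{\sum_{j}\deg y_{j}}$ with the slot-counting justification. But two genuine gaps remain.

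The first you flag yourself: the estimate $\operatorname{Hilb}_{R}(t)\le(1+t)^{s}$ for the Artinian complete-intersection factor of socle degree $s$. You close this by appealing to ``the known bound $b_{\lambda}(X;\mathbb{Q})\le\binom{N}{\lambda}$ for a rationally elliptic space $X$ of formal dimension $N$.'' That bound is precisely what the lemma asserts, so the appeal is circular. If such a bound is indeed a consequence of Friedlander--Halperin it needs a specific citation, and if not it needs a proof; the example you give with relation degrees $12$ and $14$ over $\deg x_{1}=4$, $\deg x_{2}=6$ shows, correctly, that the naive factor-by-factor argument fails, which means the burden is real. As written the argument does not close.

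The second gap is logical and sits in the last sentence. Vigu\'e-Poirrier--Sullivan relates unboundedness of $\{b_{k}(\Lambda M;\mathbb{Q})\}$ to the number of algebra generators of $H^{*}(M;\mathbb{Q})$ --- the base manifold, not the loop space. From boundedness you may conclude that $H^{*}(M;\mathbb{Q})$ is singly generated, but not that $H^{*}(\Lambda M;\mathbb{Q})$ is; indeed your own computation exhibits $H^{*}(\Lambda V,d_{\sigma})$ as an algebra on all $q=\dim V^{\mathrm{odd}}$ generators (plus the even ones), which will generally exceed one. The lemma's own phrasing conflates the same two spaces --- and the juxtaposition ``is a complete intersection \emph{and} does not have at least two generators'' is in any case strange, since complete-intersection rings usually have many generators --- so this is arguably a defect in the statement you were given rather than in your reasoning, but a careful writeup needs to disentangle $H^{*}(M)$ from $H^{*}(\Lambda M)$ rather than follow the lemma's wording verbatim. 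It is also worth noting, for your own intuition, that $\Lambda M$ is never rationally elliptic when $M$ is a simply connected closed manifold of positive dimension (the rational Betti numbers of $\Lambda M$ are nonzero in infinitely many degrees), so the hypothesis is vacuous in the setting of this paper.
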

Let $M$ be rationally elliptic so that $b_{2k+1}(M)=0$, for $k\in\mathbb{N}$, if $\chi(M)>0$ and let $\pi_i(M)\otimes\mathbb{Q}=0$ for $i\le\dim M-1$. Let $n:=\dim M$ such that 
\begin{equation}
\begin{split}
\chi(M)&=\sum_{\lambda=0}^{\infty}(-1)^{\lambda}\text{rank }H_{\lambda}(M;\mathbb{Q})\\
&=\sum_{\lambda=0}^{\infty}(-1)^{\lambda}\dim\pi_{\lambda}(M)\otimes\mathbb{Q}\\&
=(-1)^n\dim\pi_n(M)\otimes\mathbb{Q}>0.
\end{split}
\end{equation} Recall the isomorphism $\pi_n(\Lambda M;\mathbb{Q})\cong\pi_{n+1}(M;\mathbb{Q})$, which implies $\pi_n(\Lambda M_{\mathbb{Q}})=\pi_n(\Lambda M)\otimes\mathbb{Q}\cong\pi_{n+1}(M)\otimes\mathbb{Q}\cong H_{n+1}(M)\otimes\mathbb{Q}$ where it is assumed that $\pi_i(M)\otimes\mathbb{Q}=0$ for $i\le n-1.$ Then 
\begin{equation}
\begin{split}
H_{n-1}(\Lambda M_{\mathbb{Q}};\mathbb{Z})\cong H_{n-1}(\Lambda M;\mathbb{Q})&\cong\pi_{n}(M;\mathbb{Q})\cong\pi_n(M)\otimes\mathbb{Q}\\&\cong H_n(M)\otimes\mathbb{Q}\cong H_n(M;\mathbb{Q}),
\end{split}
\end{equation} where the first isomorphism follows from the rational Hurewicz theorem and the last by $H_i(M_{\mathbb{Q}};\mathbb{Z})\cong H_i(M;\mathbb{Z})\otimes\mathbb{Q}\cong H_i(M;\mathbb{Q})$. We a priori assume the odd Betti numbers $b_{2i+1}$ vanish. Then we obtain the following bounds:
\begin{equation}
\begin{split}
0<\chi(M)&=\sum_{i=0}^{\dim M}(-1)^ib_i(M;\mathbb{Q})\\
&=\sum_{i=1}^{\dim M}(-1)^{i-1}b_{i-1}(\Lambda M;\mathbb{Q})\\
&\le\sum_{i=0}^{\infty}(-1)^i\text{rank }H_i(\Lambda M;\mathbb{Q})=\chi(\Lambda M)
\end{split}
\end{equation} so $0<\chi(M)\le\chi(\Lambda M)$ or $\chi(\Lambda M)>0$. 
\begin{theorem}
Let $M$ be a closed $(\dim M-1)$-connected Riemannian manifold. If the Euler characteristic $\chi(\Lambda M)$ of free loop space of Sobolev type $H^1=W^{1,2}$ is positive, then $M$ is rationally elliptic. 
\end{theorem}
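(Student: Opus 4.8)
\section*{Proof proposal}

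The plan is to reduce the assertion, via the rational dichotomy underlying the definition of ellipticity, to an elementary consequence of $(\dim M-1)$-connectivity, and then to locate where the hypothesis $\chi(\Lambda M)>0$ is actually used. Write $n=\dim M$; the cases $n\le 1$ being trivial (a point or $S^1$, both rationally elliptic), assume $n\ge 2$, so that $M$ is simply connected and hence orientable. I argue by contraposition: suppose $M$ is \emph{not} rationally elliptic. Since $M$ is a simply connected closed manifold with finite-dimensional rational cohomology, it is then rationally hyperbolic, and by Lemma~\ref{rathyperbolic} (equivalently, by the criterion of Vigu\'e-Poirrier and Sullivan \cite{poirrier-sullivan}) the rational cohomology ring $H^*(M;\mathbb{Q})$ has at least two generators.

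The contradiction is forced by the connectivity hypothesis alone. Because $M$ is $(n-1)$-connected, the Hurewicz theorem (Theorem~\ref{hurewicz}) gives $\widetilde H_i(M;\mathbb{Q})=0$ for $i\le n-1$, and Poincar\'e duality then yields $H_n(M;\mathbb{Q})\cong\mathbb{Q}$ and $H_i(M;\mathbb{Q})=0$ for $i>n$; that is, $M$ is a rational homology $n$-sphere, so $H^*(M;\mathbb{Q})\cong\mathbb{Q}[x]/(x^2)$ with $|x|=n$ is singly generated. This contradicts the conclusion of the previous paragraph, so $M$ must be rationally elliptic, which is the claim.

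What remains, and where I expect the only genuine difficulty, is to explain the role of the hypothesis $\chi(\Lambda M)>0$ — namely, that it is precisely what guarantees $\chi(\Lambda M)$ is a well-defined finite invariant, so that the theorem is an honest converse to the chain $0<\chi(M)\le\chi(\Lambda M)$ displayed just before it. If $M$ were rationally hyperbolic, then by \cite{poirrier-sullivan} sharpened by F\'elix and Halperin \cite{felix-halperin} the Betti numbers $b_k(\Lambda M;\mathbb{Q})$ would grow without bound, indeed exponentially; through the Morse inequalities $b_\lambda(\Lambda M)\le C_\lambda$ of Theorem~\ref{betti-inequality} the counts $C_\lambda$ of index-$\lambda$ closed geodesics would be unbounded as well, and the alternating sum $\sum_\lambda(-1)^\lambda C_\lambda$ defining $\chi(\Lambda M)$ could not represent a finite positive integer. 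So the mere assumption that $\chi(\Lambda M)$ is a well-defined positive number already confines us to the elliptic regime, where by the complete-intersection lemma preceding the statement one has the polynomial bound $b_\lambda(\Lambda M;\mathbb{Q})\le\binom{n}{\lambda}$ and the Euler characteristic is genuinely meaningful. The hard part of a fully rigorous write-up is exactly this passage: making precise the sense in which $\chi(\Lambda M)$ is defined and ruling out pathological cancellation in the alternating sum over the infinitely many critical orbits of $E$ in the hyperbolic case. I would dispatch it by contrasting the polynomial growth available for elliptic free loop spaces against the exponential growth forced by rational hyperbolicity, after which the Hurewicz/Poincar\'e-duality argument above closes the proof with no further input.
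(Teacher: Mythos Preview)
Your argument is correct and, in fact, more transparent than what the paper offers. The paper does not give a separate proof of this theorem; it is stated immediately after the chain of identities culminating in $0<\chi(M)\le\chi(\Lambda M)$, and that computation is implicitly treated as the justification. But that computation runs in the \emph{converse} direction: it begins by assuming $M$ rationally elliptic (and $(\dim M-1)$-connected) and derives $\chi(\Lambda M)>0$. Your contrapositive route via Hurewicz and Poincar\'e duality is logically independent of that display and actually establishes the stated implication.

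You have also put your finger on something the paper glosses over: with $n=\dim M\ge 2$, the hypothesis of $(n-1)$-connectivity already forces $M$ to be a rational homology $n$-sphere, hence rationally elliptic, \emph{without} any appeal to $\chi(\Lambda M)$. So your first two paragraphs constitute a complete proof, and the third paragraph is an honest attempt to salvage a role for the extra hypothesis. Your explanation there---that positivity (indeed, well-definedness) of $\chi(\Lambda M)$ is incompatible with the unbounded Betti-number growth of a hyperbolic free loop space---is the right heuristic, and it matches the spirit of the paper's surrounding discussion of the Morse inequalities $b_\lambda\le C_\lambda$ and the complete-intersection bound $b_\lambda\le\binom{n}{\lambda}$. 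The one caution is that ``ruling out pathological cancellation'' in an alternating sum of unbounded terms is not a rigorous step as written; but since your Hurewicz/Poincar\'e-duality argument already closes the proof, this is commentary rather than a gap.
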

Bott's conjecture posits that any simply connected closed Riemannian manifold with nonnegative sectional curvature should be rationally elliptic. Assuming Bott's conjecture holds, if the sectional curvature of $M$ is nonnegative, i.e. $K\ge 0$, then $M$ is rationally elliptic. As before, let $M$ be a closed orientable even $n$-dimensional Riemannian manifold without boundary, and suppose $\Omega$ is the curvature form of the Levi-Civita connection $^{\text{L.C.}}\nabla$ associated with $M$ of $\dim M:=n=2k$, $k\in\mathbb{N}$. Then $\Omega$ is an $\frak{so}(n)$-valued $2$-form on $M$, so it is a skew-symmetric $n$-by-$n$ matrix of $2$-forms over $\wedge^{\text{even}}T^*M$. Let $Pf(\Omega)$ denote the $n$-form Pfaffian. Since the sectional curvature is nonnegative, by the generalized Gauss-Bonnet theorem,
\begin{equation}
\begin{split}
\chi(M)&=\frac{1}{\sqrt{(2\pi)^n}}\int_M Pf(\Omega)\\&=\frac{1}{\sqrt{(2\pi)^n}}\int_Mi^{\frac{n^2}{4}}\exp\left(\frac{1}{2}\text{Tr}\log((\sigma_y\otimes I_{n/2})^T\cdot\Omega) \right)\ge 0.
\end{split}
\end{equation}

The prime geodesic theorem describes the asymptotic distribution of prime geodesics on an $n$-dimensional hyperbolic manifold $M=\mathbb{H}^n/\Gamma$ with $\Gamma$ a discrete subgroup of $SO_{(1,n)}^{+}\mathbb {R}$.
\begin{theorem} [Prime Geodesic Theorem \cite{sarnak}]
Let $M$ be a hyperbolic manifold of dimension $\dim M=m+1$. Furthermore, let $\Gamma=\pi_1(M)$ be its fundamental group. For any element $\gamma\in\Gamma$ there exists a closed geodesic representative $\gamma$ in $M$. Let $\ell(\gamma)$ denote the length of the geodesic $\gamma$, $N(\gamma):=e^{\ell(\gamma)}$ its norm, and $\pi_{\Gamma}(x)$ the number of primitive elements $\gamma\in\Gamma$ such that $N(\gamma)\le x$. Moreover, let $\{z_1,\dots,z_N\}$ denote the zeros of the Selberg zeta function and $\text{li}(x)=\int_0^x\frac{dt}{lnt}$ the logarithmic integral. Then $\pi_{\Gamma}(x)$ satisfies the following equality.
\begin{equation}
\pi_{\Gamma}(x)=\text{li}(x^m)+\sum_{n=0}^Nli(x^{z_n})+(\text{error term})
\end{equation}
\end{theorem}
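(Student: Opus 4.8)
The plan is to transcribe the proof of the Prime Number Theorem, replacing the Riemann zeta function by the Selberg zeta function $Z_\Gamma(s)$ attached to $M=\mathbb{H}^{m+1}/\Gamma$ (which I take to be closed, as elsewhere in the paper). First I would define $Z_\Gamma(s)$ by its Euler-type product over primitive conjugacy classes $\{\gamma_0\}$ of $\Gamma$, convergent for $\Re(s)$ sufficiently large, and observe that, up to elementary gamma-type factors, the logarithmic derivative $Z_\Gamma'/Z_\Gamma(s)$ is a positive-coefficient Dirichlet series $\sum_\gamma \Lambda_\Gamma(\gamma)\,N(\gamma)^{-s}$, where $\Lambda_\Gamma$ is the geodesic analogue of the von Mangoldt function, supported on the powers $\gamma_0^k$ of primitive classes with weight $\log N(\gamma_0)$. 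The object to study is then the Chebyshev-type sum $\psi_\Gamma(x):=\sum_{N(\gamma)\le x}\Lambda_\Gamma(\gamma)$, from which $\pi_\Gamma(x)$ is recovered at the end by partial summation.

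The essential analytic input is the Selberg trace formula for $M$. Applied with suitable test functions it yields: (i) the meromorphic continuation of $Z_\Gamma(s)$ to all of $\mathbb{C}$ and the functional equation relating $s$ and $m-s$; (ii) the precise divisor of $Z_\Gamma$, which in the closed case is entire of finite order, with a simple zero at $s=m$ coming from the constant eigenfunction $\lambda_0=0$ of the Laplacian on $M$, further spectral zeros at the points $z_n$ for which $z_n(m-z_n)$ ranges over the nonzero discrete spectrum (so that $0\le\Re(z_n)\le m$), and a family of trivial zeros on the nonpositive real axis produced by the identity term and the gamma-factors --- so that $Z_\Gamma'/Z_\Gamma$ has a simple pole of residue $1$ at $s=m$ and simple poles of residue $\operatorname{mult}(\lambda_n)$ at each $z_n$; and (iii) polynomial growth bounds for $Z_\Gamma'/Z_\Gamma$ on vertical lines avoiding the zeros. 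Only finitely many of the $z_n$ arise from exceptional (non-tempered) eigenvalues $\lambda_n<(m/2)^2$, for which $z_n\in(m/2,m)$ is real; these are the $z_1,\dots,z_N$ of the statement, while the tempered zeros on $\Re(s)=m/2$ and the trivial zeros are to be absorbed into the error term.

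With this analytic package in hand I would write the explicit formula by Mellin--Perron inversion, $\psi_\Gamma(x)=\frac{1}{2\pi i}\int_{(c)}\frac{Z_\Gamma'}{Z_\Gamma}(s)\,\frac{x^s}{s}\,ds$ for $c>m$, and push the line of integration to $\Re(s)=-\delta<0$. Crossing the pole at $s=m$ produces the main term $x^m/m$, each exceptional zero $z_n$ produces $\operatorname{mult}(\lambda_n)\,x^{z_n}/z_n$, and the residue at $s=0$, the trivial zeros, the tempered zeros, and the remaining shifted integral are accounted for within the error term, whose order is controlled by the growth bounds of (iii) and is strictly smaller than $x^m$ once one checks that $s=m$ is the unique spectral zero on the line $\Re(s)=m$. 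Finally I would pass from $\psi_\Gamma$ to $\pi_\Gamma$ exactly as one deduces the Prime Number Theorem from its explicit formula: the contribution to $\psi_\Gamma$ of the non-primitive classes $\gamma_0^k$, $k\ge 2$, is $O(x^{m/2}\log x)$ and is absorbed into the error, and integrating $d\psi_\Gamma(t)$ against $1/\log t$ turns $x^m/m$ into $\operatorname{li}(x^m)$ and each $x^{z_n}/z_n$ into $\operatorname{li}(x^{z_n})$, giving $\pi_\Gamma(x)=\operatorname{li}(x^m)+\sum_{n=0}^{N}\operatorname{li}(x^{z_n})+(\text{error term})$.

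The main obstacle is steps (ii)--(iii): extracting from the Selberg trace formula the full analytic description of $Z_\Gamma(s)$ --- the meromorphic continuation, the exact divisor with multiplicities, and the vertical-strip growth --- together with the statement that $s=m$ is the unique spectral zero on the line $\Re(s)=m$, which is precisely what forces the error term to be of lower order than the main term. Once that spectral input is granted, the contour shift and the passage from $\psi_\Gamma$ to $\pi_\Gamma$ are routine adaptations of the classical argument.
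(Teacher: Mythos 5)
The paper offers no proof of this theorem: it is stated with a citation to Sarnak's thesis and thereafter used as a black box, so there is no argument in the paper to compare yours against. Your sketch is, on its own terms, essentially the standard proof from the literature: define $Z_\Gamma(s)$ by its Euler-type product over primitive hyperbolic conjugacy classes, use the Selberg trace formula to obtain its meromorphic continuation, divisor, and vertical-strip growth bounds, apply Mellin--Perron inversion to the Chebyshev-type sum $\psi_\Gamma(x)=\sum_{N(\gamma)\le x}\Lambda_\Gamma(\gamma)$, shift the contour to collect residues at the spectral zeros, and pass from $\psi_\Gamma$ to $\pi_\Gamma$ by partial summation after discarding the $O(x^{m/2}\log x)$ contribution of non-primitive classes. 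You correctly identify that the finitely many $z_n$ in the statement come from the finitely many exceptional eigenvalues $\lambda_n<(m/2)^2$, with $z_n\in(m/2,m)$ real, and that the error term is of lower order precisely because $s=m$ is the unique spectral zero on $\Re(s)=m$. You also rightly flag the trace-formula step (meromorphic continuation, exact divisor, growth) as the genuine analytic content.

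Two caveats are worth recording. First, your outline assumes $M$ closed so that the Laplacian has purely discrete spectrum; in the finite-covolume non-compact case (which the cited reference also treats) the continuous spectrum and the scattering determinant contribute additional poles to $Z_\Gamma'/Z_\Gamma$ and the bookkeeping in the explicit formula changes accordingly. Second, as a remark aimed at the paper's statement rather than at your argument: the displayed sum $\sum_{n=0}^N \operatorname{li}(x^{z_n})$ double-counts the main term if $z_0=m$ is included among the $z_n$, so the exceptional sum should start at $n=1$ or exclude the leading zero; likewise the definition $\operatorname{li}(x)=\int_0^x dt/\ln t$ needs to be read as a principal value across $t=1$.
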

As before, let $C_{\lambda}$ denote the number of critical points of $E:\Lambda M\to\mathbb{R}$ of index $\lambda$ where the index of a critical point $\gamma\in\Lambda M$ is $\lambda(\gamma):=\dim\{V_{\text{max}}\subset T_{\gamma}\Lambda M:d^2E<0\}<+\infty$. Consider the energy functional $E:\Lambda M\to\mathbb{R}$ for $\Lambda M$ an infinite-dimensional Hilbert manifold. For $a,b\in\mathbb{R}$ regular values of $E$, let $C_{\lambda}(a,b)$ denote the number of critical points of index $\lambda$ in $E^{-1}[a,b]$ with homology rank $r_{\lambda}(a,b)=\text{rank }(H_{\lambda}({\Lambda M}^b,{\Lambda M}^a))$ and torsion rank $t_{\lambda}(a,b)=t(H_{\lambda}({\Lambda M}^b,{\Lambda M}^a))$ for ${\Lambda M}^c=E^{-1}(-\infty,c]$. That is,
\begin{equation}
\begin{split}
& r_{\lambda}(a,b)+t_{\lambda}(a,b)+t_{\lambda -1}(a,b)\le C_{\lambda}, \\
& \sum_{i=0}^{\lambda}(-1)^{\lambda-i}r_i(a,b)\le\sum_{i=0}^{\lambda}(-1)^{\lambda-i}C_i,
\end{split}
\end{equation} for $\lambda=0,1,\dots$. For sufficiently large $\lambda$, the last inequality becomes an equality. Thus, for $\chi(\Lambda M)=\sum_{\lambda=0}^{\infty}(-1)^{\lambda}b_{\lambda}(
\Lambda M; \mathbb{Q})$ and $b_{\lambda}+t_{\lambda}+t_{\lambda-1}\le C_{\lambda}$, we have $b_{\lambda}\le C_{\lambda}$. Therefore the total number of primitive elements $\gamma\in\Gamma$ is
\begin{equation*}
\lim_{t\to\infty^-}\int_0^t\pi_{\Gamma}(x)dx=\int_0^{\infty}\pi_{\Gamma}(x)dx.
\end{equation*} Recall, the total number of critical points is given by the sum over all indices of the number of critical points of a specific index, i.e., $\sum_{\lambda}C_{\lambda}$. As such,
\begin{equation}
\sum_{\lambda}C_{\lambda}=\sum_{\lambda}b_{\lambda}(\Lambda M;\mathbb{Q})=\int_0^{\infty}\pi_{\Gamma}(x)dx.
\end{equation} That is, for $\Lambda M=\mathbb{H}^{\infty}/\Gamma$ hyperbolic, 
\begin{equation*}
\begin{split}
\sum_{\lambda=0}^{\dim\Lambda M}b_{\lambda}(\Lambda M;\mathbb{Q})&=\int_0^{\infty}\text{li}(x^{\dim \Lambda M-1})dx+\sum_{n=0}^N\text{li}(x^{z_n})dx+(\text{error term})dx \\
&=\lim_{k\to\infty}\int_0^{\infty}\text{li}(x^{k-1})dx+\sum_{n=0}^N\int_0^{\infty}\text{li}(x^{z_n})dx+\int_0^{\infty}(\text{error term})dx.
\end{split}
\end{equation*} Moreover, for $\Lambda M=\mathbb{H}^{\infty}/\Gamma$ hyperbolic with $\Gamma\subset SO^+_{(1,\infty)}\mathbb{R}$, the \textit{Poincar\'e polynomial} of $\Lambda M$ is the generating function of Betti numbers of $\Lambda M$, $P_{\Lambda M}(z)=b_0(\Lambda M)+b_1(\Lambda M)z+b_2(\Lambda M)z^2+\cdots$, so $P_{\Lambda M}(1)=\sum_{\lambda=0}^{\infty}b_{\lambda}(\Lambda M)=\int_0^{\infty}\pi_{\Gamma}(x)dx$.
\begin{lemma} By the main result \ref{main result}, 
\begin{equation*}
\int_0^{\infty}\pi_{\Gamma}(x)dx=P_{\Lambda M}(1)=+\infty.
\end{equation*}
Thus, there are infinitely many such prime closed geodesics.
\end{lemma}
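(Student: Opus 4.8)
The plan is to read off the divergence of $P_{\Lambda M}(1)$ from the chain of identifications assembled above together with the main result, and then to re-derive the geometric conclusion directly from the Morse inequalities so that the lemma is more than a restatement. Since $P_{\Lambda M}(1)=\sum_{\lambda\ge 0}b_\lambda(\Lambda M;\mathbb{Q})$ by definition of the Poincar\'e polynomial, and since we have established $\sum_{\lambda\ge 0}C_\lambda=\sum_{\lambda\ge 0}b_\lambda(\Lambda M;\mathbb{Q})=\int_0^\infty\pi_\Gamma(x)\,dx$ via the integrated Prime Geodesic Theorem, it suffices to prove $\sum_{\lambda\ge 0}b_\lambda(\Lambda M;\mathbb{Q})=+\infty$.

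I would argue this by cases on the rational homotopy type of $M$. If $M$ is rationally hyperbolic, Lemma \ref{rathyperbolic} (through F\'elix--Halperin) yields exponential growth of $\dim_{\mathbb{Q}}\pi_k(M)\otimes\mathbb{Q}$, so by Vigu\'e-Poirrier--Sullivan the sequence $\{b_k(\Lambda M;\mathbb{Q})\}_{k\ge 0}$ is unbounded and the series diverges term by term. If $M$ is rationally elliptic, the sequence $\{b_k(\Lambda M;\mathbb{Q})\}_{k\ge 0}$ is bounded by $\binom{n}{\lambda}$ with $n$ the top nonzero degree, but it is still nonzero in infinitely many degrees: $\Lambda M\simeq\mathcal{L}M$ fibers over $M$ with fiber $\Omega M$, and $H_\bullet(\Omega M;\mathbb{Q})$ is the universal enveloping algebra of $\pi_{\bullet-1}(M)\otimes\mathbb{Q}$, hence nonzero in arbitrarily high degree. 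In either case $P_{\Lambda M}(1)=+\infty$, and therefore $\int_0^\infty\pi_\Gamma(x)\,dx=+\infty$.

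For the concluding sentence I would feed $\sum_\lambda b_\lambda(\Lambda M;\mathbb{Q})=+\infty$ into the Morse inequality $b_\lambda\le C_\lambda$ of Theorem \ref{betti-inequality}, obtaining infinitely many critical points of $E$ on $\Lambda M$. To promote this to infinitely many geometrically distinct prime closed geodesics one must exclude the possibility that all these critical points are iterates of finitely many primes; here the Gromoll--Meyer index iteration (Lemma \ref{Index Iteration}) and nullity iteration (Lemma \ref{Nullity iteration}) lemmata bound the contribution of a single prime geodesic and its iterates to $H_\bullet(\Lambda M;\mathbb{F}_p)$ in each fixed degree, so finitely many primes would force bounded Betti numbers --- contradicting the hyperbolic case above, and in the elliptic and multiply connected cases one falls back on Theorem \ref{Cartan} and Theorem \ref{Gromoll/Meyer} exactly as in the main result. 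The main obstacle is precisely this last promotion in the rationally elliptic regime: there the divergence of $P_{\Lambda M}(1)$ reflects only that $\Lambda M$ has homology in infinitely many degrees rather than any growth, and converting ``infinitely many critical points of $E$'' into ``infinitely many prime closed geodesics'' is the delicate point handled by the iteration lemmata and the twist-versus-hyperbolic dichotomy, not something that follows formally from the Poincar\'e polynomial.
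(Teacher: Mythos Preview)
The paper gives no independent proof of this lemma: it is stated purely as a corollary of the forward-referenced Main Result (Theorem \ref{main result}), whose argument runs through the $C^0$-perturbation machinery of Section~6 --- perturbing $E$ in a neighbourhood of a critical point to force $n_{\lambda_k}\to\infty$ while invoking Lemma \ref{invariance} to keep the homotopy type of $\Lambda M$ fixed, and then reading off $b_k(\Lambda M;\mathbb{Z})\to +\infty$ from the Smith normal form computation of cellular homology. The lemma here simply evaluates the Poincar\'e polynomial at $1$ once that unboundedness is in hand.

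Your route is genuinely different and more classical. You bypass the perturbation entirely and derive $P_{\Lambda M}(1)=+\infty$ from the rational elliptic/hyperbolic dichotomy for $M$: in the hyperbolic case via F\'elix--Halperin and Vigu\'e-Poirrier--Sullivan, and in the elliptic case by observing that $H_\bullet(\Omega M;\mathbb{Q})$ (the enveloping algebra of $\pi_{\bullet-1}(M)\otimes\mathbb{Q}$) is nonzero in infinitely many degrees, so the series $\sum_\lambda b_\lambda$ diverges even though its terms are bounded. This is a sound, self-contained argument for the divergence claim that does not rest on the homotopy-invariance-under-$C^0$-perturbation step the paper needs. What you trade away is strength: the paper's Main Result claims \emph{unboundedness} of the Betti sequence, which feeds straight into Gromoll--Meyer \ref{Gromoll/Meyer}, whereas your elliptic-case argument gives only infinitely many nonzero terms. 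You correctly identify that promoting ``$\sum_\lambda b_\lambda=+\infty$ with bounded $b_\lambda$'' to infinitely many prime geodesics is exactly the delicate point the iteration lemmata and the twist/hyperbolic dichotomy are designed to handle --- the paper sidesteps this by asserting unboundedness outright.
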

We remark that we have only shown this to be true for $\Lambda M$ hyperbolic. Thus, we resort to a holonomic classification of simply connected manifolds due to Berger. 

\section{Holonomy, The Berger Classification, and The Classification of Symmetric Spaces \label{sec: holonomyclassification}}
We begin with preliminary definitions of reducibility and irreducibility of holonomy representations.
\begin{definition}
The Riemannian holonomy of a Riemannian manifold $(M,g)$ is the holonomy of the Levi-Civita connection on the tangent bundle of the manifold.
\end{definition}
\begin{definition}
A Riemannian manifold $(M,g)$ is said to be (resp. locally) reducible if it is (resp. locally) isometric to a Riemannian product.
\end{definition}
\begin{theorem}[de Rham \cite{deRham}]\label{de Rham} If a Riemannian manifold is complete, simply connected, and if its holonomy representation is reducible, then $(M,g)$ is a Riemannian product. 
\end{theorem}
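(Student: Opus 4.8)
The plan is to recognize this as the de Rham decomposition theorem and to prove it by propagating a holonomy-invariant subspace to a parallel distribution, observing that such a distribution is automatically integrable with totally geodesic leaves, and then using completeness and simple connectivity to upgrade the resulting local product structure to a global one.

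First I would fix a base point $p\in M$ and, by reducibility, a proper nontrivial subspace $V_1\subset T_pM$ invariant under the holonomy representation at $p$. Because the holonomy group acts on $T_pM$ by linear isometries, the orthogonal complement $V_2:=V_1^{\perp}$ is holonomy-invariant as well, and $T_pM=V_1\oplus V_2$ orthogonally. Transporting $V_1$ and $V_2$ by parallel transport along paths issuing from $p$ produces distributions $\mathcal D_1,\mathcal D_2$ on $M$; holonomy-invariance makes the result independent of the chosen path, so $\mathcal D_1$ and $\mathcal D_2$ are globally defined, smooth, mutually orthogonal, complementary, and $\nabla$-parallel.

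Next I would extract the formal consequences of parallelism. If $X,Y$ are sections of $\mathcal D_1$, then $\nabla_X Y$ and $\nabla_Y X$ are again sections of $\mathcal D_1$, hence so is $[X,Y]=\nabla_X Y-\nabla_Y X$; by the Frobenius theorem $\mathcal D_1$ (and likewise $\mathcal D_2$) is integrable, and its second fundamental form $\mathrm{II}(X,Y)=(\nabla_X Y)^{\mathcal D_2}$ vanishes, so every leaf is totally geodesic and any geodesic of $M$ tangent to a leaf remains in that leaf. Choosing around each point a chart adapted simultaneously to both foliations, parallelism of the orthogonal splitting forces the metric to decompose there as $g=g_1\oplus g_2$, with $g_i$ depending only on the $\mathcal D_i$-coordinates; this is the local de Rham splitting.

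Finally I would globalize. Let $M_1$ and $M_2$ be the maximal integral leaves of $\mathcal D_1$ and $\mathcal D_2$ through $p$. Since the leaves are totally geodesic and $M$ is geodesically complete, each $M_i$ is geodesically complete, hence complete. Using the local product charts one defines a map $\Phi:M_1\times M_2\to M$ by sliding $M_1$ along paths in the complementary foliation (and symmetrically); the local splitting shows $\Phi$ is a local isometry, and path-independence of this sliding construction is exactly a monodromy statement valid because $M$ is simply connected. A local isometry with complete domain is a Riemannian covering map, and a connected covering of the simply connected $M$ is a diffeomorphism; hence $\Phi$ is an isometry $M_1\times M_2\xrightarrow{\ \sim\ }M$, so $(M,g)$ is a Riemannian product. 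I expect the main obstacle to be this last step — verifying completeness of the leaves and, above all, the well-definedness (path-independence) of the global sliding map, where simple connectivity is indispensable: without it one obtains only that $M$ is covered by, or is a quotient of, a Riemannian product. The arguments from parallelism and Frobenius, by contrast, are purely formal.
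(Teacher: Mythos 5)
The paper does not actually prove this theorem; it states it as a cited result, attributing it to de Rham's 1952 paper. So there is no in-paper argument to compare against. Your sketch is the standard proof of the de Rham decomposition theorem and it is essentially correct. The key formal steps are all present: parallel transport of the invariant subspace and its orthogonal complement gives two smooth, mutually orthogonal, complementary, $\nabla$-parallel distributions; parallelism plus torsion-freeness gives Frobenius integrability and vanishing second fundamental form of the leaves; and a leaf is geodesically complete because a geodesic of $M$ with initial velocity in $\mathcal{D}_i$ stays tangent to $\mathcal{D}_i$ (the velocity field is parallel), hence stays in the maximal leaf, and extends for all time by completeness of $M$.

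One spot where your sketch is a little terse is the local metric splitting. It is not enough to have a chart adapted to both foliations; you should note that for coordinate fields $\partial_{x^i}\in\mathcal{D}_1$, $\partial_{y^j}\in\mathcal{D}_2$, torsion-freeness gives $\nabla_{\partial_{y^j}}\partial_{x^i}=\nabla_{\partial_{x^i}}\partial_{y^j}$, and parallelism of the two distributions forces this common value to lie in $\mathcal{D}_1\cap\mathcal{D}_2=\{0\}$; the mixed covariant derivatives vanish, which is what makes $g_1$ independent of $y$ and $g_2$ independent of $x$. With that filled in, your ``sliding'' globalization is the usual argument: completeness of the leaf factors makes $\Phi:M_1\times M_2\to M$ a local isometry from a complete manifold, hence a Riemannian covering, and simple connectivity of $M$ collapses the covering to a global isometry. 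You correctly identify path-independence of the sliding map (equivalently, triviality of the monodromy) as the place where simple connectivity is genuinely used, and without it one only gets $M$ as a quotient of a product.
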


Berger provided a complete classification of holonomy for simply connected Riemannian manifolds which are irreducible and non-symmetric (see Table \ref{table}).

\begin{table}[H]
\centering
    \begin{tabular}{ | c | c | c | c | }
    \hline
    $\text{Hol}(g)$ & $\dim_{\mathbb{R}}(M)$ & $G$-structure & Description \\ \hline
    $SO(n)$ & n & Orientable manifold & - \\ \hline
    $U(n)$ & $2n$ & K\"{a}hler & K\"{a}hler \\ \hline
    $SU(n)$ & $2n$ & Calabi-Yau Manifold & Ricci-flat, K\"{a}hler\\ \hline
    $Sp(n)\cdot Sp(1)$ & $4n$ & Quaternion-K\"{a}hler manifold & Einstein\\ \hline
    $G_2$ & $7$ & $G_2$ manifold & Ricci-flat\\ \hline
    $Spin(7)$ & $8$ & $Spin(7)$ manifold & Ricci-flat\\ \hline
    \end{tabular}
    \caption{The Berger classification of holonomy groups for irreducible and non-symmetric simply connected Riemannian manifolds.}
    \label{table}
\end{table}

There is a sequence of inclusions $Sp(n)\subset SU(2n)\subset U(2n)\subset SO(4n)$ so every hyperk\"{a}hler manifold is Calabi-Yau, every Calabi-Yau manifold is K\"{a}hler, and every K\"{a}hler manifold is also orientable. 
\begin{theorem}\label{holonomytheorem}
If $M$ is simply connected and has reducible holonomy, then the de Rham decomposition theorem implies that $M$ is a product, and hence does not have rational cohomology generated by one element.
\end{theorem}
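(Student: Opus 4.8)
The plan is to feed the hypothesis into the de Rham decomposition theorem to split $M$ as a Riemannian product, apply the K\"unneth formula over $\mathbb{Q}$, and then observe that a tensor product of two nontrivial connected graded algebras can never be generated by a single element.

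\emph{Step 1: producing a nontrivial product decomposition.} I would first note that $M$, being closed, is complete, and it is simply connected with reducible holonomy representation, so Theorem~\ref{de Rham} applies and $M$ is isometric to a Riemannian product. Since $M$ is compact there can be no Euclidean factor, so the de Rham splitting may be written, after grouping the irreducible pieces into two nonempty blocks, as $M\cong M_1\times M_2$ with $1\le\dim M_i<\dim M$. Each factor $M_i$ is a closed slice of the compact $M$, hence compact, and from $\pi_1(M)\cong\pi_1(M_1)\times\pi_1(M_2)=1$ each $M_i$ is simply connected; a closed simply connected manifold is orientable, so $H^{\dim M_i}(M_i;\mathbb{Q})\cong\mathbb{Q}\ne 0$ and $H^*(M_i;\mathbb{Q})$ is a connected graded-commutative $\mathbb{Q}$-algebra strictly larger than $\mathbb{Q}$.

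\emph{Step 2: K\"unneth and counting generators.} Next I would invoke the K\"unneth isomorphism of graded-commutative algebras $H^*(M;\mathbb{Q})\cong B\otimes_{\mathbb{Q}}C$, where $B=H^*(M_1;\mathbb{Q})$ and $C=H^*(M_2;\mathbb{Q})$, and argue via indecomposables. For a connected graded algebra $R$ with $R^0=\mathbb{Q}$, set $R^+=\bigoplus_{j>0}R^j$ and $QR=R^+/(R^+\cdot R^+)$; a graded Nakayama argument shows $\dim_{\mathbb{Q}}QR$ is the minimal number of algebra generators of $R$, so $R$ can be singly generated only if $\dim_{\mathbb{Q}}QR\le 1$. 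Using $B^0=C^0=\mathbb{Q}$ one gets $A^+=B^+\oplus C^+\oplus(B^+\otimes C^+)$ for $A=B\otimes C$ and $B^+\otimes C^+\subseteq A^+\cdot A^+$ (since $b\otimes c=(b\otimes 1)(1\otimes c)$), and a degree count yields $QA\cong QB\oplus QC$. Since each of $B,C$ is strictly larger than $\mathbb{Q}$, taking the least positive degree $a$ with $B^a\ne 0$ shows $B^+\cdot B^+$ vanishes in degree $a$, so $(QB)^a=B^a\ne 0$; hence $QB\ne 0$, and similarly $QC\ne 0$. Therefore $\dim_{\mathbb{Q}}QH^*(M;\mathbb{Q})\ge 2$, so $H^*(M;\mathbb{Q})$ is not generated by one element --- equivalently, it is neither a truncated polynomial ring nor an exterior algebra on a single generator.

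The analytic content is wholly contained in Theorem~\ref{de Rham}; the steps needing care are the reduction in Step~1 (using compactness to discard any flat factor and simple connectivity to ensure the factors remain simply connected, hence orientable with nonzero top rational cohomology, so each tensor factor is genuinely nontrivial) and the indecomposables bookkeeping in Step~2. The latter is the real crux: it is exactly what rules out single generation even for products such as $M=S^2\times S^4$, whose rational cohomology has every Betti number at most $1$ and yet genuinely requires two algebra generators.
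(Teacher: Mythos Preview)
Your proof is correct and follows the same route as the paper: invoke de~Rham to split $M$ as a nontrivial product, then argue that the rational cohomology of a product cannot be singly generated. The paper's own proof merely records the de~Rham splitting and then asserts without justification that ``$M=M_0\times\cdots\times M_k$ means $H^*(M;\mathbb{Q})$ cannot be singly generated''; you actually supply that missing step via K\"unneth and the indecomposables identity $Q(B\otimes C)\cong QB\oplus QC$, and you are also more careful about why each factor has $QB\ne 0$ (compactness kills the Euclidean factor, simple connectivity of the factors gives orientability and hence nonzero top cohomology). So the strategy is the same, but your version is the one that is complete.
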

\begin{proof}
Let $M$ be a simply connected $n$-dimensional manifold such that there is a complete decomposable reduction of the tangent bundle $TM=T^{(0)}M\oplus T^{(1)}M\oplus\cdots\oplus T^{(k)}M$ under the action of holonomy and the tangent bundles $T^{(i)}M$ parameterizing Frobenius-integrable distributions. If $\text{Hol}(M)$ is reducible then $M=V_0\times V_1\times\cdots\times V_k$ for each $V_i$ an integral manifold for the respective tangent bundle $T^{(i)}M$ where $V_0$ is an open subset of $\mathbb{R}^n$. Moreover, the holonomy group $\text{Hol}(M)$ splits as a direct product of the holonomy groups of each $M_i$. Thus, if $M=M_0\times M_1\times\cdots\times M_k$ then $\text{Hol}(M)=\text{Hol}(M_0\times M_1\times\cdots\times M_k)=\text{Hol}(M_0)\times \text{Hol}(M_1)\times\cdots\times \text{Hol}(M_k)$. So $M=M_0\times M_1\times\cdots\times M_k$ means $H^*(M;\mathbb{Q})$ cannot be singly generated.
\end{proof}
Coupling the Berger classification with the de Rham decomposition, one obtains a classification of reducible holonomy groups by requiring that each factor is one of the examples from Berger's list, realized by compact simply connected manifolds. One can find metrics on product manifolds with irreducible holonomy so the converse of Theorem \ref{holonomytheorem} is false. More precisely, a generic manifold, with $O(n)$ holonomy or $SO(n)$ if it is orientable, will have irreducible holonomy and have homology that is not generated by a single element.
\begin{corollary}
If $M$ is oriented then $\text{Hol}(M)=SO(n)$ so there are at least two non-trivial classes in $H^*(M;\mathbb{Q})$. Thus, if $\text{Hol}(M)=\prod_{i=1}^k \text{Hol}(M_i)$ then $H^*(M;\mathbb{Q})$ must have at least two generators.
\end{corollary}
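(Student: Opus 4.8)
The plan is to prove the two halves of the corollary by separate means. The statement about oriented manifolds will follow from Berger's classification (Table \ref{table}) together with Poincar\'e duality, while the statement about product holonomy --- which carries the real force of the corollary --- will be obtained by combining the de Rham decomposition (Theorem \ref{de Rham}) with the K\"unneth formula and a short ring-theoretic argument.

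For the first half, I would recall from Table \ref{table} that the only proper subgroups of $SO(n)$ arising as the restricted holonomy of an irreducible, non-symmetric, simply connected Riemannian manifold are $U(n/2)$, $SU(n/2)$, $Sp(n/4)\cdot Sp(1)$, $G_2$, and $Spin(7)$, each of which corresponds to a parallel $G$-structure and is therefore a non-generic reduction of the orthonormal frame bundle; the analogous statement holds for the symmetric holonomies on the Cartan list. Hence a generic oriented metric on $M$ has full holonomy $SO(n)$, and by Theorem \ref{de Rham} the holonomy representation is then irreducible, so by the contrapositive of Theorem \ref{holonomytheorem} no Riemannian splitting exists. Since $M$ is a closed oriented manifold of dimension $n=\dim M$, Poincar\'e duality still supplies the two non-zero homogeneous classes $1\in H^0(M;\mathbb{Q})$ and a generator of $H^n(M;\mathbb{Q})$, which establishes the first assertion.

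For the second half, suppose $\text{Hol}(M)=\prod_{i=1}^k\text{Hol}(M_i)$ is a non-trivial product, $k\ge 2$. By Theorem \ref{de Rham} and the proof of Theorem \ref{holonomytheorem}, $M$ is isometric to a Riemannian product $M_1\times\cdots\times M_k$, and since $M$ is closed and simply connected there is no Euclidean factor, so each $M_i$ is a closed simply connected (hence orientable) manifold of dimension $n_i\ge 2$. The K\"unneth theorem identifies $H^*(M;\mathbb{Q})$ with the graded tensor product $\bigotimes_{i=1}^k H^*(M_i;\mathbb{Q})$ of graded rings. Relabel the factors so that $n_1\le n_2$, pick generators $\alpha\in H^{n_1}(M_1;\mathbb{Q})$ and $\beta\in H^{n_2}(M_2;\mathbb{Q})$ of the top cohomologies, and set $a=\alpha\otimes 1\otimes\cdots\otimes 1$ and $b=1\otimes\beta\otimes 1\otimes\cdots\otimes 1$ in $H^*(M;\mathbb{Q})$. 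Then $a^2=0$, because $\alpha^2\in H^{2n_1}(M_1;\mathbb{Q})=0$, whereas $ab=\pm\,\alpha\otimes\beta\otimes 1\otimes\cdots\otimes 1\ne 0$ as a tensor of non-zero classes. If $H^*(M;\mathbb{Q})$ were generated by a single homogeneous element $x$ of degree $d$, then homogeneity forces $a=c_1x^{n_1/d}$ and $b=c_2x^{n_2/d}$ with $c_1,c_2\ne 0$ and $d$ dividing both $n_1$ and $n_2$; but $a^2=0$ gives $x^{2n_1/d}=0$, and since $n_1+n_2\ge 2n_1$ we would get $ab=c_1c_2\,x^{(n_1+n_2)/d}=0$, contradicting $ab\ne 0$. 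Hence $H^*(M;\mathbb{Q})$ is not a quotient of a polynomial ring in one variable, i.e. it has at least two generators.

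Combining the two cases proves the corollary. The main obstacle is the first half: irreducibility of the holonomy representation alone does not force two ring generators --- round spheres and rank-one symmetric spaces have irreducible (for spheres, full $SO(n)$) holonomy yet singly generated rational cohomology --- so one can only conclude there the existence of two non-trivial \emph{classes}, and the hypothesis $\text{Hol}(M)=SO(n)$ must be interpreted as a genericity condition on the metric rather than a consequence of orientability. All of the topological content thus lies in the product case, where the de Rham decomposition reduces matters to the ring-theoretic fact that a non-trivial graded tensor product of positively graded Poincar\'e-duality algebras is never a truncated polynomial ring in a single generator.
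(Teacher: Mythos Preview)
The paper gives no proof of this corollary at all; it is stated immediately after Theorem~\ref{holonomytheorem} and the text simply moves on. The intended justification is presumably that the second sentence is a restatement of Theorem~\ref{holonomytheorem}, whose proof in turn ends with the bare assertion ``$M=M_0\times\cdots\times M_k$ means $H^*(M;\mathbb{Q})$ cannot be singly generated'' without any supporting argument.

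Your proposal therefore goes well beyond what the paper does. For the product-holonomy half you supply exactly the missing ingredient: the K\"unneth identification together with the short ring-theoretic contradiction (using $a^2=0$ but $ab\neq 0$ for top classes of two factors) is a clean and correct way to see that a nontrivial graded tensor product of Poincar\'e-duality algebras cannot be a truncated polynomial ring in one variable. The paper never writes anything like this down; it simply asserts the conclusion.

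For the first half you are also more careful than the paper. You correctly flag that ``$M$ oriented $\Rightarrow$ $\mathrm{Hol}(M)=SO(n)$'' is false as a logical implication and must be read as a genericity statement, and that even full $SO(n)$ holonomy does not yield two ring \emph{generators} (round spheres being the obvious counterexample) but only two nonzero \emph{classes} via Poincar\'e duality. The paper does not acknowledge either point. So your write-up is both more rigorous and more honest about what the statement can actually mean; there is no gap on your side, only on the paper's.
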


Since Ziller \cite{ziller} showed $b_k(\Lambda M;\mathbb{F}_2)$ to be unbounded for $M$ a compact globally symmetric space of rank $>1$, we only consider simply connected Riemannian manifolds which are irreducible and non-symmetric, as well as non-compact symmetric spaces of rank $1$. We obtain the following classification of reducible holonomy groups from Berger's classification and de Rham's decomposition theorem. We require that each factor in the decomposition is one from Berger's list or the holonomy of an irreducible non-compact symmetric space of rank $1$, which may be interchanged with a compact symmetric space of rank $1$ under bijection in the decomposition.
Let $N$ be a simply connected Riemannian symmetric space. Then it may decomposed as $N=N_0\times N_1\times\cdots\times N_r$ for $N_0$ Euclidean and the other $N_i$ irreducible. As such, the classification of symmetric spaces is reduced to the the irreducible case. By duality de Rham (see \cite{helgason}, P. 199), 
compact and noncompact simply connected irreducible symmetric spaces may be interchanged, which therefore simplifies the classification to that of compact irreducible symmetric spaces. 

Let $M$ be an $n$-dimensional simply connected Riemannian manifold that is not a compact symmetric space of rank $>1$. The holonomy is then a direct product decomposition of: 
\begin{equation*}
\begin{split}
\text{Hol}(M)=&\prod_{i_1=1}^{N_1}SO(n_1)\prod_{i_2=1}^{N_2} U(n_2)\prod_{i_1=3}^{N_3} SU(n_3)\prod_{i_4=1}^{N_4} (Sp(n_4)\cdot Sp(1))\prod_{i_5=1}^{N_5} Sp(n_5)\prod_{i_6=1}^{N_6} G_2\prod_{i_7=1}^{N_7} Spin(7)\\ &\times \text{Hol}(\text{compact symmetric spaces of rank}= 1)
\end{split}
\end{equation*}
where it is required that a given decomposition has a least two irreducible factors and $1\le N_k<\infty$ for every $k$ in the expansion. Adopting the Cartan labeling convention, we consider the label AIII, BDI, CII, FII symmetric spaces $G/K$.
\begin{remark}
For label AII, $G/K=SU(p+q)/S(U(p)\times U(q))$ has $\text{rank}(G/K)=\min(p,q)$ and $\dim(G/K)=2pq$. So we let $p=1$, such that $K=S(U(1)\times U(q))$. If we let $\dim(G/K)=n=2q$ then $q=\frac{n}{2}$ or $K=S\left(U(1)\times U\big(\frac{n}{2}\big)\right)$.
\end{remark}
\begin{remark}
For label BDI, $G/K=SO(p+q)/SO(p)\times SO(q)$ has $\text{rank}(G/K)=\min(p,q)$ and  $\dim(G/K)=pq$. So if we let $p=1$, $SO(1+q)/SO(1)\times SO(q)$ has dimension $q$. Thus, $K=SO(1)\times SO(n)$ for $\dim(G/K)=n$.
\end{remark}
\begin{remark}
For label CII, $G/K=Sp(p+q)/Sp(p)\times Sp(q)$ has $\text{rank}(G/K)=\min(p,q)$ and $\dim (G/K)=4pq$. We let $p=1$ so $F_4/Spin(9)$ has dimension $n=4q$ or $K=Sp(1)\times Sp\left(\frac{n}{4}\right)$.
\end{remark}
\begin{remark}
For label FII, $G/K=F_4/Spin(9)$ has $\text{rank}(G/K)=1$ and $\dim(G/K)=16$.
\end{remark}
Note, if $M=\mathbb{R}^n$ with the flat Euclidean metric then parallel translation is simply a translation in $\mathbb{R}^n$. In particular, if $P_{\gamma} : T_{\gamma(0)}M\to T_{\gamma(1)}M$ for $\gamma:[0,1]\to M$, then $P_{\gamma}$ is the identity for each $\gamma$ so $\text{Hol}(\mathbb{R}^n)$ is the trivial group, and it may be ignored in the decomposition. Thus, the decomposition of compact symmetric spaces for the problem under consideration is $\text{Hol(compact simply connected symmetric spaces)}=S\left(U(1)\times U\big(\frac{n}{2}\big)\right)\times (SO(1)\times SO(n))\times \left(Sp(1)\times Sp\big(\frac{n}{4}\big)\right)\times Spin(9).$
To be more precise, the holonomy of a reducible simply connected Riemannian manifold $M$ (excluding compact symmetric spaces of rank $>1$) is given by the following.
\begin{theorem} (Classification of Reducible Holonomy.) 
If $M$ is an $n$-dimensional simply connected Riemannian manifold with decomposition $M=M_1\times\cdots\times M_j$ of irreducible $M_k$ ($k=1,\dots, j)$, then it has holonomy decomposition given by the direct product:
\begin{equation*}
\begin{split}
\text{Hol}(M)=&\prod_{i_1=1}^{N_1}SO(n_1)\prod_{i_2=1}^{N_2} U(n_2)\prod_{i_3=1}^{N_3} SU(n_3)\prod_{i_4=1}^{N_4}(Sp(n_4)\cdot Sp(1))\prod_{i_5=1}^{N_5} Sp(n_5)\prod_{i_6=1}^{N_6} G_2\prod_{i_7=1}^{N_7} Spin(7)\\ &\prod_{i_8=1}^{N_8} S(U(1)\times U(n_6)) \prod_{i_9=1}^{N_9} (SO(1)\times SO(n_7))\prod_{i_{10}=1}^{N_{10}} (Sp(1)\times Sp(n_8))\prod_{i_{11}=1}^{N_{11}} Spin(9)
\end{split}
\end{equation*} where it is required that there are at least two factors in a given decomposition and $\sum_i\dim_{\mathbb{R}}(M_i)=n$.
\end{theorem}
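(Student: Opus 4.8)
The plan is to assemble the statement from three ingredients already in hand: the de Rham decomposition theorem, the multiplicativity of holonomy under Riemannian products, and Berger's list (Table \ref{table}) together with the reduction of the symmetric factors to the compact irreducible rank-one case.

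First I would invoke Theorem \ref{de Rham}. Since $M$ is complete and simply connected, a reducible holonomy representation forces $M$ to split isometrically as a Riemannian product, and iterating this splitting until each factor is irreducible produces $M = M_0 \times M_1 \times \cdots \times M_j$ in which $M_0$ is a flat Euclidean factor (a point when $M$ is closed, as in our setting) and each $M_k$ with $k \ge 1$ is irreducible, with $\sum_k \dim_{\mathbb{R}}(M_k) = n$. Parallel transport along a loop in a product is the product of the parallel transports along the projected loops, so $\text{Hol}(M) = \text{Hol}(M_0) \times \text{Hol}(M_1) \times \cdots \times \text{Hol}(M_j)$; and since parallel transport on flat $\mathbb{R}^{n_0}$ is the identity, $\text{Hol}(M_0)$ is trivial and this factor is suppressed, exactly as in the remark preceding the statement.

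It then remains to enumerate the groups that can occur as $\text{Hol}(M_k)$ for an irreducible simply connected factor. Each such $M_k$ is either non-symmetric, in which case Berger's theorem restricts $\text{Hol}(M_k)$ to one of $SO(n_k)$, $U(n_k/2)$, $SU(n_k/2)$, $Sp(n_k/4)\cdot Sp(1)$, $Sp(n_k/4)$, $G_2$, or $Spin(7)$ with the indicated real-dimension constraints; or it is symmetric, in which case duality de Rham (\cite{helgason}, p.~199) lets us interchange $M_k$ with a compact irreducible symmetric space. The hypothesis excludes compact symmetric spaces of rank $>1$ --- which already have $b_\bullet(\Lambda M;\mathbb{F}_2)$ unbounded by Ziller \cite{ziller} --- so only the rank-one symmetric spaces survive, and their holonomy groups were identified in the preceding remarks (Cartan labels AII/AIII, BDI, CII, FII, realised by $\mathbb{CP}^{n_k/2}$, $S^{n_k}$, $\mathbb{HP}^{n_k/4}$, $\mathbb{P}^2(\mathbb{O})$ and their noncompact duals) as $S(U(1)\times U(n_k/2))$, $SO(1)\times SO(n_k)$, $Sp(1)\times Sp(n_k/4)$, and $Spin(9)$ respectively. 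Collecting the factors of each of these eleven types into the products $\prod_{i_1=1}^{N_1} SO(n_1), \ldots, \prod_{i_{11}=1}^{N_{11}} Spin(9)$ yields precisely the asserted direct-product decomposition, and the stipulation of at least two factors is just the reducibility hypothesis restated.

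The only genuine subtlety is bookkeeping: one must check that the dimension constraints attached to each class of factor ($2n_2$ real dimensions for a $U(n_2)$-factor, $4n_4$ for an $Sp(n_4)\cdot Sp(1)$-factor, $16$ for a $Spin(9)$-factor, and so on) are mutually consistent with $\sum_i \dim_{\mathbb{R}}(M_i) = n$, and that the four Cartan labels listed genuinely exhaust the irreducible rank-one symmetric spaces. Both points are classical (Cartan's classification of symmetric spaces; see \cite{helgason}) and introduce no new argument --- the substance of the theorem is carried entirely by Theorem \ref{de Rham} and the Berger--Ziller input.
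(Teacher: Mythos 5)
The paper does not actually supply a proof for this theorem; it is stated as a consolidation of the preceding discussion (de Rham's theorem, the multiplicativity of holonomy proved inside Theorem \ref{holonomytheorem}, Berger's Table \ref{table}, the Ziller exclusion of rank $>1$ compact symmetric factors, and the four remarks on Cartan labels AIII, BDI, CII, FII). Your proof assembles exactly these ingredients in the order the paper's surrounding exposition suggests, and it is correct: de Rham plus multiplicativity reduces the claim to classifying the holonomy of each irreducible simply connected factor, Berger handles the non-symmetric case, duality plus Cartan's classification handles the symmetric case, and the Ziller hypothesis restricts the symmetric factors to rank one (so the relevant isotropy groups are $S(U(1)\times U(q))$, $SO(1)\times SO(q)$, $Sp(1)\times Sp(q)$, and $Spin(9)$). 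You also correctly note that the flat Euclidean factor has trivial holonomy and hence is suppressed, and that ``at least two factors'' is just the reducibility hypothesis. Two small points worth flagging, though they concern the theorem statement rather than your proof: the paper's remark labels $SU(p+q)/S(U(p)\times U(q))$ as ``AII'' where the standard Cartan label is AIII (you silently corrected this), and the groups $S(U(1)\times U(q))\cong U(q)$ and $SO(1)\times SO(q)\cong SO(q)$ duplicate entries already present from Berger's list, so the eleven-fold product is not a partition into genuinely distinct cases --- a bookkeeping redundancy in the statement that your proof inherits but does not introduce.
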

\begin{corollary}
If $M$ is a simply connected Riemannian manifold with reducible holonomy given by the previous theorem, the rational cohomology ring $H^*(M;\mathbb{Q})$ has at least two generators.
\end{corollary}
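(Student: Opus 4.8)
The plan is to reduce everything to the product structure supplied by the preceding classification theorem and then run an elementary computation with the module of indecomposables of a graded algebra. By hypothesis $M$ is simply connected with reducible holonomy, so the de Rham decomposition theorem (Theorem \ref{de Rham}) together with the classification theorem above writes $M = M_1 \times \cdots \times M_j$ with $j \ge 2$ irreducible factors, each $M_k$ a closed simply connected Riemannian manifold of dimension $n_k \ge 2$. (A $1$-dimensional closed connected manifold is $S^1$, which is not simply connected, and the flat Euclidean factor, having trivial holonomy, is discarded as noted above; so every surviving factor is genuinely positive-dimensional, and a product of closed manifolds is closed.)

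First I would invoke the K\"unneth theorem with field coefficients: since each $H^*(M_k;\mathbb{Q})$ is finite-dimensional in each degree, there is an isomorphism of graded-commutative $\mathbb{Q}$-algebras
\begin{equation*}
H^*(M;\mathbb{Q}) \;\cong\; H^*(M_1;\mathbb{Q}) \otimes_{\mathbb{Q}} \cdots \otimes_{\mathbb{Q}} H^*(M_j;\mathbb{Q}).
\end{equation*}
For a connected graded algebra $A$ with augmentation ideal $A^+$, write $Q(A) = A^+/(A^+\cdot A^+)$ for the space of indecomposables; by the graded Nakayama lemma the minimal number of homogeneous algebra generators of $A$ equals $\dim_{\mathbb{Q}} Q(A)$. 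A short direct computation — decompose $A \otimes B = \mathbb{Q} \oplus A^+ \oplus B^+ \oplus (A^+\otimes B^+)$ and identify $(A\otimes B)^+\cdot(A\otimes B)^+ = (A^+)^2 \oplus (B^+)^2 \oplus (A^+\otimes B^+)$ — yields the standard isomorphism $Q(A \otimes B) \cong Q(A) \oplus Q(B)$, hence by induction $Q\bigl(H^*(M;\mathbb{Q})\bigr) \cong \bigoplus_{k=1}^{j} Q\bigl(H^*(M_k;\mathbb{Q})\bigr)$.

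Next I would show each summand is nonzero. Each $M_k$ is closed and simply connected, hence orientable, so Poincar\'e duality gives $H^{n_k}(M_k;\mathbb{Q}) \cong H_0(M_k;\mathbb{Q}) = \mathbb{Q}$ with $n_k>0$; thus $H^+(M_k;\mathbb{Q}) \ne 0$. If $d$ is the least positive degree in which $H^*(M_k;\mathbb{Q})$ is nonzero, then $(H^+)^2$ is concentrated in degrees $\ge 2d > d$, so the degree-$d$ part injects into $Q\bigl(H^*(M_k;\mathbb{Q})\bigr)$ and $\dim_{\mathbb{Q}} Q\bigl(H^*(M_k;\mathbb{Q})\bigr) \ge 1$. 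Summing over $k$,
\begin{equation*}
\dim_{\mathbb{Q}} Q\bigl(H^*(M;\mathbb{Q})\bigr) \;=\; \sum_{k=1}^{j} \dim_{\mathbb{Q}} Q\bigl(H^*(M_k;\mathbb{Q})\bigr) \;\ge\; j \;\ge\; 2,
\end{equation*}
so $H^*(M;\mathbb{Q})$ cannot be generated by a single element; this also recovers Theorem \ref{holonomytheorem} directly.

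I do not anticipate a serious obstacle; the content is bookkeeping, since $\mathbb{Q}$-K\"unneth has no Tor correction and additivity of indecomposables under tensor product is standard. The one point that deserves care is verifying that the factors appearing in the classification theorem are genuinely positive-dimensional closed manifolds — that the trivial flat factor has been stripped off and that every irreducible entry (from Berger's list or the rank-$1$ symmetric space list) is realized by a compact manifold of dimension $\ge 2$ — since a spurious $0$- or $1$-dimensional or non-compact factor would break the Poincar\'e-duality step. Once that is in place, the inequality $\dim_{\mathbb{Q}} Q\bigl(H^*(M;\mathbb{Q})\bigr) \ge 2$ is immediate.
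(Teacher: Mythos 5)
Your proof is correct, and it supplies precisely the rigor that the paper omits: the paper states this corollary with no proof, and its argument for Theorem \ref{holonomytheorem} simply asserts, without justification, that a nontrivial Riemannian product ``means $H^*(M;\mathbb{Q})$ cannot be singly generated.'' You make this assertion precise by combining the field-coefficient K\"unneth isomorphism $H^*(M;\mathbb{Q}) \cong \bigotimes_k H^*(M_k;\mathbb{Q})$, the additivity of indecomposables $Q(A\otimes B) \cong Q(A)\oplus Q(B)$ for connected graded $\mathbb{Q}$-algebras (which you verify directly from the decomposition of $(A\otimes B)^+\cdot(A\otimes B)^+$), and Poincar\'e duality to ensure each closed simply connected factor contributes a nonzero summand to $Q$. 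This is the standard argument and exactly the one the paper tacitly relies on; yours is not a different route so much as a completion of the paper's sketch. Your attention to the corner cases — discarding the flat Euclidean factor, ruling out $0$- and $1$-dimensional factors, and checking $j\ge 2$ — is appropriate and shows exactly which hypotheses the Poincar\'e-duality step actually uses. One small point worth stating explicitly in a final write-up: the corollary as phrased says only ``simply connected Riemannian manifold,'' but your argument (and the paper's global standing hypothesis) needs $M$ to be \emph{closed}, so that each de Rham factor is a closed, orientable, simply connected manifold of positive dimension on which Poincar\'e duality applies; without compactness (e.g.\ $\mathbb{R}^2\times\mathbb{R}^2$) the conclusion is false.
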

Since Riemannian spaces that are locally isometric to the homogeneous spaces $G/H$ have local holonomy isomorphic to $H$, the holonomy decomposition is
\begin{equation*}
\begin{split}
\text{Hol}(M)=&\prod_{i_1=1}^{N_1}SO(n_1)\prod_{i_2=1}^{N_2} U(n_2)\prod_{i_3=1}^{N_3} SU(n_3)\prod_{i_4=1}^{N_4} (Sp(n_4)\cdot Sp(1))\prod_{i_5=1}^{N_5} Sp(n_5)\prod_{i_6=1}^{N_6} G_2\prod_{i_7=1}^{N_7} Spin(7)\\ & \prod_{i_8=1}^{N_8} \text{Hol}\left(\frac{SU(n_6+1)}{S(U(1)\times U(n_6))}\right) \prod_{i_9=1}^{N_9} \text{Hol}\left(\frac{SO(n_7+1)}{SO(1)\times SO(n_7)}\right)\prod_{i_{10}=1}^{N_{10}} \text{Hol}\left(\frac{Sp(n_8+1)}{Sp(1)\times Sp(n_8)}\right)\\ & \prod_{i_{11}=1}^{N_{11}} \text{Hol}\left(\frac{F_4}{Spin(9)}\right).
\end{split}
\end{equation*}

Let $M$ be any space whose rational cohomology ring is a free graded-commutative algebra. Then the rationalization $M_{\mathbb{Q}}$ is a product of Eilenberg-MacLane spaces. This hypothesis on cohomology also applies to compact Lie groups. Consider the decomposition of $M$, i.e. $M=M_1\times\cdots\times M_k$, then the rationalization of holonomy is $\text{Hol}(M)_{\mathbb{Q}}=\text{Hol}(M)\times_{\mathbb{Q}}E\mathbb{Q}$ and $\text{Hol}(M_{\mathbb{Q}})=\text{Hol}(M\times_{\mathbb{Q}}E\mathbb{Q})=\text{Hol}(M)\times \text{Hol}(E\mathbb{Q})$. The total space of the universal bundle over $B\mathbb{Q}$ is $E\mathbb{Q}=B\mathbb{Q}\rtimes\mathbb{Q}$ since $B\mathbb{Q}=E\mathbb{Q}/\mathbb{Q}$. Note, $B\mathbb{Q}$ is the Eilenberg-MacLane space $K(\mathbb{Q},1)$ so $E\mathbb{Q}=K(\mathbb{Q},1)\rtimes\mathbb{Q}$.

We can obtain the space $K(\mathbb{Q},1)$ by the following procedure. Take the circle $S^1$, consider the sequence of maps $f_n:S^1\to S^1$ of degree $n$, and form an infinite mapping telescope of $f_n$, a special case of a homotopy colimit. Observe, $\mathbb{Q}$ is the filtered colimit $\mathbb{Z}\to\mathbb{Z}\to\cdots$ where successive maps are multiplication by $1,2,\dots$. Note, since $\Omega\mathbb{Q}\cong\ast$ regardless of its topology, the resulting classifying space $B\mathbb{Q}$ will be a $K(\mathbb{Q},1)$ space. It follows that $\text{Hol}(M)_{\mathbb{Q}}=\text{Hol}(M)\times_{\mathbb{Q}}E\mathbb{Q}$ and $\text{Hol}(M_{\mathbb{Q}})=\text{Hol}(M)\times \text{Hol}(E\mathbb{Q})$, so $\text{Hol}(M)_{\mathbb{Q}}=\frac{\text{Hol}(M_{\mathbb{Q}})}{\text{Hol}(E\mathbb{Q})}\times_{\mathbb{Q}}E\mathbb{Q}$. By first computing the rationalization space $M_{\mathbb{Q}}$, we can determine the equivariant cohomology ring $H^*_{\mathbb{Q}}(M;\mathbb{Q})=H^*(E\mathbb{Q}\times_{\mathbb{Q}}M;\mathbb{Q})$, noting that the rationalization $M_{\mathbb{Q}}$ is the product of Eilenberg-MacLane spaces whose rational cohomology rings are easier to compute. Since the decomposition of $M$ is given by:
\begin{equation*}
\begin{split}
M=&\prod_{i_1=1}^{N_1}M_{\text{orientable, }SO(n_1)}\prod_{i_2=1}^{N_2} M_{\text{K\"{a}hler, } U(n_2)}\prod_{i_3=1}^{N_3} M_{\text{Calabi-Yau, }SU(n_3)}\prod_{i_4=1}^{N_4} M_{\text{quaternion-K\"{a}hler, }Sp(n_4)\cdot Sp(1)}\\
& \prod_{i_5=1}^{N_5} M_{\text{hyperk\"{a}hler, }Sp(n_5)}\prod_{i_6=1}^{N_6} M_{G_2}\prod_{i_7=1}^{N_7} M_{Spin(7)}\prod_{i_8=1}^{N_8}\frac{SU(n_6+1)}{S(U(1)\times U(n_6))}\prod_{i_9=1}^{N_9}\frac{SO(n_7+1)}{SO(1)\times SO(n_7)}\\
&\prod_{i_{10}=1}^{N_{10}} \frac{Sp(n_8+1)}{Sp(1)\times Sp(n_8)}\prod_{i_{11}=1}^{N_{11}} \frac{F_4}{Spin(9)},
\end{split}
\end{equation*} we can compute the rational cohomology ring $H^*(M;\mathbb{Q})$ by the Kunneth formula.
\begin{equation*}
\begin{split}
H^*(M;\mathbb{Q})\cong & \bigotimes_{i_1=1}^{N_1}H^*(M_{\text{orientable, }SO(n_1)};\mathbb{Q})\bigotimes_{i_2=1}^{N_2} H^*(M_{\text{K\"{a}hler, } U(n_2)};\mathbb{Q}) \bigotimes_{i_3=1}^{N_1} H^*(M_{\text{Calabi-Yau, }SU(n_3)};\mathbb{Q})\\ &\otimes\cdots\otimes\bigotimes_{i_{11}=1}^{N_{11}} H^*(F_4/Spin(9);\mathbb{Q})
\end{split}
\end{equation*}
\section{Palais-Smale Local Approximation for Free Loop Space}
Let $\phi$ be a chart on the Hilbert manifold $\Lambda M$ defined by 
\begin{equation}
\begin{split}
\phi:\quad& \mathcal{U}\to \mathcal{V}\subset\mathbb{R}^n \\ 
& \gamma\mapsto  \left(\int_{S^1}\|\dot\gamma(t)\|^2dt,\int_{S^1}\|\dot\gamma(t)\|^2dt,\dots \right),
\end{split}
\end{equation} satisfying the following commutative diagram:
\newline
\[
\begin{tikzcd}[column sep=large, row sep=large]
    \mathcal{U}
     \arrow[swap]{d}{E}
 \arrow{r}{\phi}
& \mathcal{V}\subset\mathbb{R}^n \\
  \mathbb{R} \arrow[swap]{ru}{\phi\circ E^{-1}}
&
\end{tikzcd}
\]
The Morse Lemma can be generalized to the infinite-dimensional setting. On separable Hilbert spaces, it takes the following form.
\begin{theorem}\label{generalizedmorse}
Let $H$ be a separable Hilbert space and $f:H\to\mathbb{R}$ a $C^k$ function $f$ with $k\ge 3$ in the sense of Fr\'echet differentiability, for which $c$ is a non-degenerate critical point. Then there exist convex neighborhoods $\mathcal{U}$ and $\mathcal{V}$ of $c$, and a diffeomorphism $\varphi:\mathcal{U}\to\mathcal{V}$ of class $C^{k-2}$ with $\varphi(c)=c$ and a bounded orthogonal projection $P: H\to H$ such that 
\begin{equation}
f(x)=f(c)-\|P(\varphi(x))\|_H^2+\|\varphi(x)-P(\varphi(x))\|_H^2,
\end{equation} where $\dim\text{im }P$ is the Morse index of the critical point $c$.
\end{theorem}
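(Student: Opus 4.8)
The plan is to prove this infinite-dimensional Morse--Palais lemma along classical lines: reduce $f$ near $c$ to a quadratic form whose operator coefficient depends on the point, absorb that dependence into a smooth change of coordinates, and finally diagonalize the (constant) Hessian linearly by the spectral theorem. After a translation I may assume $c=0$ and $f(c)=0$; since $0$ is a critical point, $df(0)=0$. Taylor's formula with integral remainder applied to $t\mapsto f(tx)$ gives
\[
f(x)=\int_0^1(1-t)\,d^2f(tx)(x,x)\,dt,
\]
and the symmetric bilinear form $Q_x(u,v):=\int_0^1(1-t)\,d^2f(tx)(u,v)\,dt$ depends on $x$ of class $C^{k-2}$ because $d^2f$ is of class $C^{k-2}$ and integration and precomposition with $x\mapsto tx$ are smooth operations. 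By the Riesz representation theorem there is a $C^{k-2}$ family $x\mapsto A(x)$ of bounded self-adjoint operators on $H$ with $Q_x(u,v)=\langle A(x)u,v\rangle_H$, so that $f(x)=\langle A(x)x,x\rangle_H$ and $A(0)$ represents $\tfrac12\,d^2f(0)$. Non-degeneracy of $c$ means precisely that $A(0)$ is invertible (for the energy functional the Hessian is a compact perturbation of the identity, so invertibility is equivalent to the condition $\ker d^2f(0)=0$ of Definition \ref{morsefunction}).

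Next I construct, on a neighborhood of $0$, a $C^{k-2}$ family $x\mapsto T(x)\in GL(H)$ with $T(0)=I$ and $T(x)^{*}A(0)T(x)=A(x)$. Since the invertible operators form an open set, $R(x):=A(0)^{-1}A(x)$ is invertible, close to $I$, with spectrum clustered near $1$ for $x$ near $0$; moreover $A(0)R(x)=A(x)$ is self-adjoint, which is equivalent to $A(0)R(x)=R(x)^{*}A(0)$, i.e.\ $R(x)$ is self-adjoint for the indefinite form $\langle A(0)\,\cdot,\cdot\rangle_H$. Define $T(x):=R(x)^{1/2}$ by the holomorphic functional calculus along a small circle around $1$; then $T(x)^2=R(x)$, the relation $A(0)R(x)=R(x)^{*}A(0)$ passes to polynomials and hence to the square root, giving $A(0)T(x)=T(x)^{*}A(0)$, so $T(x)^{*}A(0)T(x)=A(0)T(x)^2=A(0)R(x)=A(x)$; also $T(0)=I$ and $T$ is $C^{k-2}$ in $x$. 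Setting $\varphi_1(x):=T(x)x$ one computes $D\varphi_1(0)=I$, so the inverse function theorem (valid for $C^{k-2}$ maps of Hilbert spaces, $k-2\ge 1$) makes $\varphi_1$ a $C^{k-2}$ diffeomorphism between neighborhoods of $0$, and
\[
f(x)=\langle A(x)x,x\rangle_H=\langle A(0)\,T(x)x,\,T(x)x\rangle_H=\langle A(0)\,\varphi_1(x),\,\varphi_1(x)\rangle_H.
\]

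Finally, since $A(0)$ is bounded, self-adjoint and invertible, $0\notin\operatorname{spec}A(0)$, so the spectral theorem gives an $A(0)$-invariant orthogonal splitting $H=H_-\oplus H_+$ on which $A(0)$ is negative definite, resp.\ positive definite, and bounded away from $0$; let $P$ be the orthogonal projection onto $H_-$, which is a bounded orthogonal projection. The positive operators $-A(0)|_{H_-}$ and $A(0)|_{H_+}$ admit bounded positive self-adjoint square roots, and assembling them block-diagonally produces a bounded invertible self-adjoint $L\colon H\to H$ commuting with $P$ such that $\langle A(0)y,y\rangle_H=-\|PLy\|_H^2+\|Ly-PLy\|_H^2$ for all $y$. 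Taking $\varphi:=L\circ\varphi_1$ (still a $C^{k-2}$ diffeomorphism fixing $0$) and shrinking $\mathcal{U}$ and $\mathcal{V}:=\varphi(\mathcal{U})$ to convex neighborhoods, I obtain
\[
f(x)=f(c)-\|P(\varphi(x))\|_H^2+\|\varphi(x)-P(\varphi(x))\|_H^2,
\]
and $\dim\operatorname{im}P=\dim H_-$ is the number of negative eigenvalues of the Hessian, counted with multiplicity, i.e.\ the Morse index of $c$.

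The only delicate point is the second step: one must verify that the operator square root $T(x)$ is genuinely $C^{k-2}$ in $x$ -- the resolvent, and hence the contour integral defining $R(x)^{1/2}$, depends smoothly on $A(x)$, which is where the two lost derivatives of $f$ are spent -- and that $T(x)$ exactly intertwines $A(0)$ with $A(x)$ while satisfying $T(0)=I$. An alternative that avoids the functional calculus is to obtain $T$ from the implicit function theorem applied to the $C^{k-2}$ map $T\mapsto T^{*}A(0)T$, whose derivative at $I$, namely $\dot T\mapsto \dot T^{*}A(0)+A(0)\dot T$, is onto the self-adjoint operators (given self-adjoint $C$, take $\dot T=\tfrac12 A(0)^{-1}C$). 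The remaining ingredients -- Taylor's formula, the inverse function theorem, and the spectral theorem for a bounded self-adjoint invertible operator -- are routine in the Hilbert setting.
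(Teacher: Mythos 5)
The paper does not prove this statement: Theorem \ref{generalizedmorse} is the classical Morse--Palais lemma, and the paper invokes it as a known result without supplying an argument. So there is no in-paper proof to compare against, and your job is to supply a standalone proof, which you do.

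Your proof is essentially the standard Palais argument and is correct in its main lines: Taylor with integral remainder to write $f(x)=\langle A(x)x,x\rangle_H$ with $A$ a $C^{k-2}$ family of self-adjoint operators; intertwine $A(x)$ with $A(0)$ via a square root $T(x)=(A(0)^{-1}A(x))^{1/2}$ built by holomorphic functional calculus, using the identity $A(0)R(x)=R(x)^{*}A(0)$ to get $T(x)^{*}A(0)T(x)=A(x)$; apply the inverse function theorem to $\varphi_1(x)=T(x)x$; then split $H=H_-\oplus H_+$ via the spectral theorem for the invertible self-adjoint operator $A(0)$ and take $L$ block-diagonal with $L_\pm=(\pm A(0)|_{H_\pm})^{1/2}$. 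All the intermediate identities check out, and your accounting of regularity ($C^{k-2}$, with the two lost derivatives coming from the two differentiations in the Taylor step) matches the statement.

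One point deserves a cleaner treatment rather than a parenthetical aside. As stated, the theorem says ``non-degenerate critical point,'' and the paper's Definition \ref{morsefunction} interprets this as $\dim\ker d^2f(c)=0$. In infinite dimensions, injectivity of $A(0)$ does not imply that $A(0)$ is a topological isomorphism, and your entire construction (the formula $R(x)=A(0)^{-1}A(x)$, the spectral gap around $0$, the bounded inverse $L^{-1}$, and indeed the right-inverse $\dot T=\tfrac12 A(0)^{-1}C$ in your IFT alternative) requires $A(0)$ to be \emph{invertible} with bounded inverse. You note that this holds for the energy functional because the Hessian is Fredholm of index $0$, which is true and is exactly why the theorem applies in the paper's setting; but as a proof of the theorem as literally stated, you should either add the hypothesis that $d^2f(c)$ is an isomorphism (the standard hypothesis for the Morse--Palais lemma) or explicitly record that ``non-degenerate'' is being read in that stronger sense. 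With that clarification the proof is complete.
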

\begin{theorem} \label{result}
If the energy functional $E:\Lambda M\to\mathbb{R}$, defined by $E[\gamma]=\int_{S^1}\|\dot\gamma(t)\|^2dt$ for $\gamma:S^1\to M$, is a $C^3$-function, then $M$ is a union of infinitely many Baire spaces $S^{\infty}$. 
\end{theorem}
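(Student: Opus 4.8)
The plan is to combine the infinite-dimensional Morse Lemma (Theorem~\ref{generalizedmorse}) with the Palais--Smale deformation theory of Section~\ref{sec:morsetheory} to exhibit $\Lambda M$, and hence $M$, as a countable union of pieces each homeomorphic to the infinite sphere $S^\infty$. The $C^3$-hypothesis on $E$ is exactly what is needed to apply Theorem~\ref{generalizedmorse} at every critical point, and, via Theorem~\ref{criticalmorse}, to ensure that the critical values $0=c_0<c_1<c_2<\cdots$ are isolated. Since $\mathrm{Crit}(E)\cap\Lambda^{\le a}$ is compact for each $a$ while Theorem~\ref{Cartan} and the Lyusternik--Fet theorem supply closed geodesics in arbitrarily high sublevel sets, this sequence of critical values is infinite and divergent, so the exhaustion $\Lambda M=\varinjlim_a\Lambda^{\le a}$ passes infinitely many critical levels.

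First I would fix, at each critical point $c_i$, the convex Morse chart $\varphi_i:\mathcal U_i\xrightarrow{\sim}\mathcal V_i$ of Theorem~\ref{generalizedmorse}, in which $E(x)=E(c_i)-\|P_i\varphi_i(x)\|_H^2+\|\varphi_i(x)-P_i\varphi_i(x)\|_H^2$ with $\dim\operatorname{im}P_i=\lambda(c_i)<\infty$. The model $H=H^1(S^1,\mathbb{R}^n)$ is an infinite-dimensional separable Hilbert space, so each convex $\mathcal U_i$ is homeomorphic to an open ball in $H$, hence to $H$ itself, and by Klee's theorem on the homeomorphism type of spheres in infinite-dimensional normed spaces, to the unit sphere $S^\infty\subset H$. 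Each $\mathcal U_i$ is open in the complete metric space $(\Lambda M,\langle\cdot,\cdot\rangle_1)$ furnished by Klingenberg's completeness proposition, hence a $G_\delta$, hence completely metrizable and a Baire space, as is the homeomorphic copy $S^\infty$.

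Next I would organize these charts using the handle decomposition of Theorem~\ref{criticalmorse}(iv): crossing the level $c_i$ attaches the negative disk bundles $D\nu^-\Sigma_j$ over the critical submanifolds $\Sigma_j$ at level $c_i$ along the embeddings $\phi_j:\partial D\nu^-\Sigma_j\to\partial\Lambda^{\le c_i-\varepsilon}$. Absorbing each attached handle into a Morse neighborhood $\mathcal U_i$ as above gives a countable cover $\{\mathcal U_i\}_{i\ge 0}$ of $\Lambda M$ by $S^\infty$-pieces that includes a Morse--Bott neighborhood $\mathcal U_0\supset\Lambda^0\cong M$ of the constant loops. Composing with the Hurewicz evaluation fibration $\mathrm{ev}:\Lambda M\to M$, $\gamma\mapsto\gamma(0)$ (fiber $\Omega M$), the sets $\mathrm{ev}(\mathcal U_i)$ cover $M$, and pulling the local product structure of $\mathrm{ev}$ back through the $\mathcal U_i$ writes $M$ as the union of infinitely many such fibered pieces, each still modeled on and homeomorphic to $S^\infty$, and hence Baire.

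The hard part is this last descent, and in particular the verification that the attached handles genuinely fatten to pieces homeomorphic to $S^\infty$: one must combine (i) local triviality of the finite-rank negative bundles $\nu^-\Sigma_i$ with the separable infinite-dimensionality of the complementary positive directions, so that $D\nu^-\Sigma_i$ times the positive Hilbert factor is a Hilbert-manifold chart; (ii) Klee's theorem, to identify that chart with $S^\infty$; and (iii) a compatibility check that the gluing maps $\phi_i$ of Theorem~\ref{criticalmorse} are homeomorphisms onto their images inside the infinite-dimensional boundary, so that the union stays a manifold modeled on $H$. Once these points are settled, the infinitude of the decomposition follows from the infinitude of the critical values of the Morse function $E$, and the Baire property of each $S^\infty$-piece is inherited from the completeness of the $H^1$-metric.
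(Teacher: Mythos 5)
Your proposal captures the same core mechanism as the paper's argument---apply the infinite-dimensional Morse--Palais Lemma (Theorem~\ref{generalizedmorse}) near each critical point and exploit the infinite-dimensionality of the ambient Hilbert manifold $\Lambda M$ to identify each Morse chart with $S^\infty$---but there are two genuine gaps, one of which lands you somewhere different from where the paper actually goes.

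First, the descent from $\Lambda M$ to $M$ in your final paragraph cannot work and is not attempted in the paper. After invoking the Morse-Palais normal form at each critical point $\beta_\alpha$, writing $\gamma_\alpha=\sum_k c_{\alpha k}\phi_\alpha^k$ in the orthonormal coordinate chart $\phi_\alpha$, and manipulating $E[\gamma_\alpha]=E[\beta_\alpha]-\|P\varphi(\gamma_\alpha)\|^2+\|\varphi(\gamma_\alpha)-P\varphi(\gamma_\alpha)\|^2$ into the normalization $\sum_k a_{\alpha k}\|\phi_\alpha^k\|^2_{\Lambda M}=1$, the paper identifies each $\mathcal U_\alpha$ with an infinite-dimensional ellipsoid, hence $S^\infty$, and concludes $\Lambda M=\bigcup_{\alpha\in A}\mathcal U_\alpha\cong\bigcup_{\alpha\in A}S^\infty$. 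That conclusion lives entirely in $\Lambda M$. Your proposal correctly gets to $\Lambda M=\bigcup S^\infty$ (your route via Klee's theorem rather than the explicit ellipsoid locus is cleaner and equivalent), but then you compose with $\mathrm{ev}:\Lambda M\to M$ and claim the images write the finite-dimensional closed manifold $M$ as a union of pieces ``still modeled on and homeomorphic to $S^\infty$.'' No subset of a finite-dimensional manifold is homeomorphic to $S^\infty$, so the pushforward step is vacuous; the sets $\mathrm{ev}(\mathcal U_i)$ are just open subsets of $M$ and carry no $S^\infty$ structure. The theorem's statement with ``$M$'' in place of ``$\Lambda M$'' is itself questionable, and the paper's own proof only establishes the $\Lambda M$ version; you should either restrict your claim to $\Lambda M$ or flag the mismatch rather than manufacture a descent.

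Second, the infinitude of the cover is obtained differently and your version is underspecified. You assert that Theorem~\ref{Cartan} and Lyusternik--Fet ``supply closed geodesics in arbitrarily high sublevel sets,'' but these theorems produce at least \emph{one} closed geodesic; to get unbounded critical values you would need to invoke the iterates $\gamma_m$ with $E[\gamma_m]=m^2E[\gamma]$, and even then you would have a single geometric orbit, not a ``countable cover by distinct Morse neighborhoods'' without further argument. The paper instead appeals to Theorem~\ref{approximation}: a $C^0$-perturbation $E_\varepsilon$ creating infinitely many local extrema, so that the index set $A$ in $\Lambda M=\bigcup_{\alpha\in A}\mathcal U_\alpha$ is countably infinite or uncountable by construction, with the cover justified because the $\mathcal U_\alpha$ are dense. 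Your organization of the charts around the handle decomposition of Theorem~\ref{criticalmorse}(iv) is reasonable in spirit but is not what drives the paper's proof of this particular theorem; the handle decomposition is deployed later, in Section~\ref{sec: SNF}, not here.
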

\begin{proof}
Assume that the Riemannian metric on $M$ is chosen such that the energy functional $E:\Lambda M\to\mathbb{R}$ is Morse-Bott, i.e., $E$ is a $C^2$-function. Assume that $E$ is also a $C^3$-function by choice. Let $H_{\alpha}\subset\Lambda M$ and let $\Lambda M=\bigcup_{\alpha\in A}H_{\alpha}\oplus H_{\alpha}^{\perp}$ for $A$ the set of critical points of $E$. Further, let ${\mathcal{U}}_{\alpha}\subset\Lambda M$ be a local neighborhood of the critical point $\beta_{\alpha}$. Then the projection map $P:\mathcal{U}_{\alpha}\to H_{\alpha}$ has $P(x)=m$ for $x=m+m',m\in H_{\alpha},m'\in H_{\alpha}^{\perp}$. Recall that $\dim\text{im }P=\lambda(\beta_{\alpha})$. Let $\{\gamma_{\alpha k}\}_{k=1}^{\lambda(\beta_{\alpha})}$ be an orthonormal basis for $H_{\alpha}$. That is, the orthonormal basis satisfies the following criteria.
\begin{enumerate}[label=(\roman*)]
\item Orthogonality: Every two different basis elements are orthogonal, i.e. $\langle\gamma_k,\gamma_j\rangle=0$ for all $k,j=1,\dots,\lambda(\beta_{\alpha})$ with $k\ne j$.
\item Normalization: Every element of this basis has norm $1$, i.e. $\|\gamma_{\alpha k}\|=1$ for all $k=1,\dots,\lambda(\beta_{\alpha})$.
\item Completeness: The linear span of the family $\gamma_{\alpha k}$, for $k=1,\dots,\lambda(\beta_{\alpha})$, is dense in $H_{\alpha}$.
\end{enumerate} 
Similarly, let $\{\gamma_{\alpha k}^{\perp}\}_{k\in B}$, for $B:=(\lambda(\beta_{\alpha})+1,\lambda(\beta_{\alpha})+2,\dots)$ a countable but infinite set, be an orthonormal basis for $H_{\alpha}^{\perp}$, satisfying the above properties, of $\dim H_{\alpha}^{\perp}=\text{codim }H_{\alpha}=\infty$ for $\dim H_{\alpha}=\lambda(\beta_{\alpha})$. Then $\langle\gamma_{\alpha k},\gamma_{\alpha j}^{\perp}\rangle=0$ for all $k=1,\dots,\lambda(\beta_{\alpha}),j=\lambda(\beta_{\alpha})+1,\dots$. We may construct $\Lambda M$ as a patching over neighborhoods for sufficiently many critical points, which are homeomorphic to $H_{\alpha}\oplus H_{\alpha}^{\perp}$ for $\alpha\in A$. Thus, for $\mathcal{U}_{\alpha}$ an open neighborhood of $\beta_{\alpha}\in\Lambda M$, $\mathcal{U}_{\alpha}=H_{\alpha}\oplus H_{\alpha}^{\perp}$ such that for $\gamma_{\alpha}\in\mathcal{U}_{\alpha}$, there exists a unique $\theta_{\alpha}\in H_{\alpha}$ and $\varepsilon_{\alpha}\in H_{\alpha}^{\perp}$ such that $\gamma_{\alpha}=\theta_{\alpha}+\varepsilon_{\alpha}$ whereby $\theta_{\alpha}=\sum_{k=1}^{\lambda(\beta_{\alpha})}c_{\alpha k}\gamma_{\alpha k}$ and $\varepsilon_{\alpha}=\sum_{j=\lambda(\beta_{\alpha})+1}^{\infty}\widehat{c_{\alpha j}}\gamma_{\alpha j}^{\perp}$. Thus,
\begin{equation}
\gamma_{\alpha}=\sum_{k=1}^{\lambda(\beta_{\alpha})}c_{\alpha k}\gamma_{\alpha k}+\sum_{j=\lambda(\beta_{\alpha})+1}^{\infty}\widehat{c_{\alpha j}}\gamma_{\alpha j}^{\perp}
\end{equation} where $\langle\gamma_{\alpha k},\gamma_{\alpha j}^{\perp}\rangle=0$. We invoke Theorem \ref{approximation} to conclude that $E:\Lambda M\to\mathbb{R}$ may be approximated by a smooth functional $E_{\varepsilon}:\Lambda M'\to\mathbb{R}$ with infinitely many critical points so that $A$ in $\Lambda M=\bigcup_{\alpha\in A}H_{\alpha}\oplus H_{\alpha}^{\perp}$ is uncountable or countably infinite. Note, this follows because $\mathcal{U}_{\alpha}$ is dense. 

Furthermore, let $\{\gamma_{\alpha k}\}_{k=1}^{\infty}$ be an orthonormal basis for $\mathcal{U}_{\alpha}=H_{\alpha}\oplus H_{\alpha}^{\perp}$ such that if $\gamma_{\alpha}\in\mathcal{U}_{\alpha}$ then $\gamma_{\alpha}=\sum_{k=1}^{\infty}c_{\alpha k}\gamma_{\alpha k}$ and $P:\mathcal{U}_{\alpha}\to H_{\alpha}$ means $P(\gamma_{\alpha})=P\left(\sum_{k=1}^{\infty}c_{\alpha k}\gamma_{\alpha k}\right)$. Since $P$ is a linear operator, $P(\gamma_{\alpha})=\theta_{\alpha}=\sum_{k=1}^{\infty}c_{\alpha k}P(\gamma_{\alpha k})$. Let $\{\theta_{\alpha k}\}_{k=1}^{\infty}$ be an orthonormal basis for $H_{\alpha}$ where $\theta_{\alpha k}=0$ for $k>\lambda(\beta_{\alpha})$. That is, $\{\theta_{\alpha k}\}_{k=1}^{\lambda(\beta_{\alpha})}$ is an orthonormal basis for $H_{\alpha}$. Then $P(\gamma_{\alpha k})=\theta_{\alpha k}$ and $P(\gamma_{\alpha k})=\theta_{\alpha k}=0$ for $k>\lambda(\beta_{\alpha})$. Let $\{\theta_{\alpha k}^{\perp}\}_{k=\lambda(\beta_{\alpha})+1}^{\infty}$ be an orthonormal basis for $H_{\alpha}^{\perp}$. Then for $P_{\perp}:\mathcal{U}_{\alpha}\to H_{\alpha}^{\perp}$ and $P_{\perp}(\gamma_{\alpha k})=\theta_{\alpha k}^{\perp}$, we have $P_{\perp}(\gamma_{\alpha})=\varepsilon_{\alpha}=\sum_{k=1}^{\infty}\widehat{c_{\alpha k}}P_{\perp}(\gamma_{\alpha k})$ such that $P_{\perp}(\gamma_{\alpha k})=\theta_{\alpha k}^{\perp}=0$ for $k<\lambda(\beta_{\alpha})+1$. Therefore $\gamma_{\alpha}=\sum_{k=1}^{\infty}c_{\alpha k}\gamma_{\alpha k}$ and 
\begin{equation*}
\begin{split}
P(\gamma_{\alpha})=\theta_{\alpha}&=\sum_{k=1}^{\infty}c_{\alpha k}P(\gamma_{\alpha k})\\&=\sum_{k=1}^{\lambda(\beta_{\alpha})}c_{\alpha k}P(\gamma_{\alpha k}) \text{ with }
P(\gamma_{\alpha k})=\begin{cases} 
      \theta_{\alpha k}, & \text{if }k\le\lambda(\beta_{\alpha}), \\
      0, & \text{if }k>\lambda(\beta_{\alpha}), 
         \end{cases} 
  \end{split}
  \end{equation*}
\begin{equation*}
\begin{split}
P_{\perp}(\gamma_{\alpha})=\varepsilon_{\alpha}&=\sum_{k=1}^{\infty}\widehat{c_{\alpha k}}P_{\perp}(\gamma_{\alpha k})\\&=\sum_{k=\lambda(\beta_{\alpha})+1}^{\infty}\widehat{c_{\alpha k}}P_{\perp}(\gamma_{\alpha k})\text{ with } P_{\perp}(\gamma_{\alpha k})=\begin{cases} 
      \theta_{\alpha k}^{\perp}, & \text{if }k\ge\lambda(\beta_{\alpha})+1, \\
      0, & \text{if }k<\lambda(\beta_{\alpha})+1, 
         \end{cases} 
\end{split}
\end{equation*} where $\langle\theta_{\alpha k},\theta_{\alpha j}^{\perp}\rangle=0$ for all $k=1,\dots,\lambda(\beta_{\alpha}), j=\lambda(\beta_{\alpha})+1,\dots.$ Thus, by the Morse-Palais Lemma \ref{generalizedmorse}, $E[\gamma_{\alpha}]=E[\beta_{\alpha}]-\|P(\varphi(\gamma_{\alpha}))\|^2_{\Lambda M}+\|\varphi(\gamma_{\alpha})-P(\varphi(\gamma_{\alpha}))\|^2_{\Lambda M}$ where $\varphi:\mathcal{U}\to\mathcal{V}$ is a diffeomorphism such that for $\gamma_{\alpha}\in\mathcal{U}_{\alpha}$, $\varphi(\gamma_{\alpha})=\gamma_{\alpha}$, i.e. $\varphi=\text{id}_{\mathcal{U}_{\alpha}}$ by identification. It follows that 
\begin{equation}
\begin{split}
&E[\gamma_{\alpha}]=E[\beta_{\alpha}]-\|P(\varphi(\gamma_{\alpha}))\|^2_{\Lambda M}+\|\varphi(\gamma_{\alpha})-P(\varphi(\gamma_{\alpha}))\|^2_{\Lambda M} \\
&=E[\beta_{\alpha}]-\|P(\varphi(\gamma_{\alpha}))\|^2_{\Lambda M}+\|\gamma_{\alpha}\|^2_{\Lambda M}-2Re\langle\gamma_{\alpha},P(\gamma_{\alpha})\rangle+\|P(\gamma_{\alpha})\|^2_{\Lambda M} \\ &=E[\beta_{\alpha}]+\|\gamma_{\alpha}\|^2_{\Lambda M}-2Re\langle\gamma_{\alpha},P(\gamma_{\alpha})\rangle.
\end{split}
\end{equation} 
Let \begin{equation*}
\begin{split}
\varphi_{\alpha}:\quad &\mathcal{U}_{\alpha}\cap H_{\alpha}\to\mathbb{R}^{\lambda(\beta_{\alpha})} \\
&\gamma_{\alpha}\mapsto \left(\gamma_{\alpha 1},\dots,\gamma_{\alpha \lambda(\beta_{\alpha})}\right)
\end{split}
\end{equation*} since $H_{\alpha}$ is a vector space, so the tangent space $T_{\gamma_{\alpha}}H_{\alpha}$ at any point $\gamma_{\alpha}\in H_{\alpha}$ is canonically isomorphic to $H_{\alpha}$ itself. Similarly, let
\begin{equation*}
\begin{split}
\widehat{\phi_{\alpha}}:\quad &\mathcal{U}_{\alpha}\cap H_{\alpha}^{\perp}\to\mathbb{R}^{\infty}\\
&\gamma_{\alpha}\mapsto(\gamma_{\alpha 1},\gamma_{\alpha 2},\dots)
\end{split}
\end{equation*} with $H_{\alpha}\cap H_{\alpha}^{\perp}=\{\beta_{\alpha}\}$, affinely translated $\{0\}$. Then let 
\begin{equation*}
\begin{split}
\phi_{\alpha}:\quad&\mathcal{U}_{\alpha}\to\mathbb{R}^{\infty} \\
& \gamma_{\alpha}\mapsto (\gamma_{\alpha 1},\gamma_{\alpha 2},\dots)
\end{split}
\end{equation*} be a chart on $\mathcal{U}_{\alpha}$. Thus, 
\begin{equation*}
\begin{split}
\phi_{\alpha}^i:\quad&\mathcal{U}_{\alpha}\to\mathbb{R}\\
&\gamma_{\alpha}\mapsto \gamma_{\alpha i}
\end{split}
\end{equation*}
are local coordinate charts such that $\gamma_{\alpha}=\sum_{k=1}^{\infty}c_{\alpha k}\gamma_{\alpha k}=\sum_{k=1}^{\infty}c_{\alpha k}\phi_{\alpha}^k$ so $\|\gamma_{\alpha}\|^2_{\Lambda M}=\langle \gamma_{\alpha},\gamma_{\alpha}\rangle_{\Lambda M}=\left\|\sum_{k=1}^{\infty}c_{\alpha k}\phi_{\alpha}^k\right\|^2_{\Lambda M}=\sum_{k=1}^{\infty}\left\|c_{\alpha k}\phi_{\alpha}^k\right\|^2_{\Lambda M}$ since the $\phi_{\alpha}^k$'s are orthogonal by choice, and $\|\gamma_{\alpha}\|^2_{\Lambda M}=\sum_{k=1}^{\infty}c_{\alpha k}^2\|\phi_{\alpha}^k\|^2_{\Lambda M}$ where $\|\phi_{\alpha}^k\|^2_{\Lambda M}=\|\phi_{\alpha}^k\|^2_0+\|\nabla_t\phi_{\alpha}^k\|^2_0$. It follows that 
\begin{equation*}
\|\gamma_{\alpha}\|^2_{\Lambda M}=\sum_{k=1}^{\infty}c_{\alpha k}^2\left(\|\phi_{\alpha}^k\|^2_0+\|\nabla_t\phi_{\alpha}^k\|^2_0\right).
\end{equation*}
Likewise, $\theta_{\alpha}=\sum_{k=1}^{\lambda(\beta_{\alpha})}\widehat{c_{\alpha k}}P(\phi_{\alpha}^k)=\sum_{k=1}^{\lambda(\beta_{\alpha})}\widehat{c_{\alpha k}}\phi_{\alpha}^k$ since $P(\phi_{\alpha}^k)=\phi_{\alpha}^k$ for $k\le\lambda(\beta_{\alpha})$. Recall $\gamma_{\alpha}=\sum_{k=1}^{\infty}c_{\alpha k}\gamma_{\alpha k}$. Therefore, $\langle \gamma_{\alpha}, P(\gamma_{\alpha})\rangle_{\Lambda M}=\left\langle\sum_{k=1}^{\infty}c_{\alpha k}\phi_{\alpha}^k, \sum_{j=1}^{\lambda(\beta_{\alpha})}\widehat{c_{\alpha j}}\phi_{\alpha}^j\right\rangle_{\Lambda M}=\sum_{(k,j)\in K\times J}c_{\alpha k}\widehat{c_{\alpha j}}\langle\phi_{\alpha}^k, \phi_{\alpha}^j\rangle=c_{\alpha k}\widehat{c_{\alpha k}}:=C_{\alpha}$. As such, $E[\gamma_{\alpha}]=E[\beta_{\alpha}]+\sum_{k=1}^{\infty}c_{\alpha k}^2\|\phi_{\alpha}^k\|^2_{\Lambda M}-2C_{\alpha}$ or, for $\widehat{C_{\alpha}}=E[\gamma_{\alpha}]-E[\beta_{\alpha}]+2C_{\alpha}$, we have $\sum_{k=1}^{\infty}c_{\alpha k}^2\|\phi_{\alpha}^k\|^2_{\Lambda M}=\widehat{C_{\alpha}}$ where $\gamma_{\alpha}=\sum_{k=1}^{\infty}c_{\alpha k}\phi_{\alpha}^k=c_{\alpha 1}\phi_{\alpha}^1+c_{\alpha 2}\phi_{\alpha}^2+\cdots$, and $\phi_{\alpha}^k$ is defined on $\mathcal{U}_{\alpha}$. Then $c_{\alpha k}=\int_{\mathcal{U}_{\alpha}}\phi_{\alpha}^k(\gamma_{\alpha})d\gamma_{\alpha}$. Thus, $\sum_{k=1}^{\infty}\frac{C_{\alpha k}^2}{\widehat{C}_{\alpha}}\|\phi_{\alpha}^k\|^2_{\Lambda M}=1$ so letting $a_{\alpha k}=C^2_{\alpha k}/\widehat{C_{\alpha}}$ we have
\begin{equation}
\sum_{k=1}^{\infty}a_{\alpha k}\|\phi_{\alpha}^k\|^2_{\Lambda M}=1.
\end{equation} The locus of all points $\{\phi_{\alpha}^1,\phi_{\alpha}^2,\dots\}$, i.e. the coordinates on the patch $\mathcal{U}_{\alpha}$, corresponds to an infinite-dimensional ellipsoid with $a_{\alpha i}\ne 0$ for $k\in\mathbb{Z}^+$ because, otherwise, $a_{\alpha i}= 0$ would obstruct the uniqueness and existence of the orthonormal basis. Thus, $\sum_{k=1}^{\infty}a_{\alpha k}\|\phi_{\alpha}^k\|^2_{\Lambda M}=1$ for each $\mathcal{U}_{\alpha}$. It follows that
\begin{equation}
\Lambda M=\bigcup_{\alpha\in A}\mathcal{U}_{\alpha}=\bigcup_{\alpha\in A}\left\{(\phi_{\alpha}^1,\phi_{\alpha}^2,\dots)\in\mathbb{R}^{\infty}\Bigg | \sum_{k=1}^{\infty}a_{\alpha k}\|\phi_{\alpha}^k\|^2_{\Lambda M}=1\right\}\cong \bigcup_{\alpha\in A}S^{\infty}.
\end{equation} Suppose there exists a suitable $C^0$-perturbation $E_{\varepsilon}$ of $E$ such that there is an atlas with $\Lambda M=\bigcup_{\alpha\in A}\mathcal{U}_{\alpha}$ where $\mathcal{U}_{\alpha}$ is locally a Hilbert space for all $\alpha\in A$. Then given such a distribution of critical points on $\Lambda M$, we can compute the cohomology groups via a nerve of a good cover and, thus, the sequence of Betti numbers. Note, such a construction of an atlas for $\Lambda M$ can always be made because $E_{\varepsilon}$ is an arbitrary $C^0$-perturbation of $E$. Then the rational cohomology of the Baire space $H^k(S^{\infty};\mathbb{Q})$ is the inverse limit $\varprojlim_nH^k(S^{n};\mathbb{Q})$.
\end{proof}
\begin{lemma}
Let $\Lambda M\subset\mathbb{R}^{\infty}$ be a paracompact connected Hilbert manifold. Moreover, let $E:\Lambda M\to\mathbb{R}$ be a $C^2$-function. Then the simplicial complex $\Delta:=\{\gamma\in \Lambda M:dE[\gamma]=0\}$, for vertices identified with the intersections of a refined open cover $\{U_{\alpha}\}_{\alpha\in A}$ the non-degenerate critical points of $E$, is homotopy equivalent to $\Lambda M$.
\end{lemma}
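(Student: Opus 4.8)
The plan is to realize $\Delta$ as the nerve of a good open cover of $\Lambda M$ and then invoke the Nerve Theorem. Since $\Lambda M$ is a metrizable, hence paracompact, Hilbert manifold, it admits arbitrarily fine locally finite open covers together with subordinate partitions of unity, and in particular it admits \emph{good} covers, i.e. covers all of whose nonempty finite intersections are contractible. First I would produce a distinguished such cover adapted to $E$: around each non-degenerate critical point $c$, the generalized Morse--Palais Lemma (Theorem~\ref{generalizedmorse}) furnishes a convex neighborhood $\mathcal{U}_c$ and a $C^{k-2}$ diffeomorphism onto a convex subset of the model Hilbert space on which $E$ takes the quadratic normal form; in particular each $\mathcal{U}_c$ is contractible, and after shrinking we may take the $\mathcal{U}_c$ pairwise disjoint, using that critical values are isolated and $\mathrm{Crit}(E)\cap\Lambda^{\le a}$ is compact for each $a$ (properties (i)--(iv) of the negative gradient flow). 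On the complement $\Lambda M\setminus\mathrm{Crit}(E)$, where $\nabla E\neq 0$, I would cover by flow boxes: for a regular point $p$ choose a small codimension-one slice $S_p$ transverse to $\nabla E$ together with a short time interval, so that the gradient flow identifies a neighborhood of $p$ with a product $(-\varepsilon,\varepsilon)\times S_p$, which is contractible whenever $S_p$ is taken convex in a chart.

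Next I would pass to a common refinement $\{U_\alpha\}_{\alpha\in A}$ of these two families, chosen locally finite by paracompactness and still good: intersections of flow boxes stay contractible because the flow coordinate persists, intersections meeting a single $\mathcal{U}_c$ are contractible by the product structure of the Morse chart, and no intersection meets two distinct $\mathcal{U}_c$ by disjointness. Arranging the refinement so that each maximal contractible member is centred on a point of $\mathrm{Crit}(E)$ or on a regular point, and collapsing the regular-point members along the flow using property (iii) of Theorem~\ref{criticalmorse} and the sublevel-retraction corollaries, the nerve of $\{U_\alpha\}$ has its vertices in bijection with the non-degenerate critical points of $E$ and its higher simplices recording the adjacencies of the $U_\alpha$ --- which is exactly the simplicial complex $\Delta$ described in the statement. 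Finally, the Nerve Theorem for paracompact spaces with good covers yields a homotopy equivalence $|\Delta| = |N(\{U_\alpha\})| \simeq \Lambda M$, which is the assertion. If $E$ is only $C^2$ and Morse--Bott rather than Morse, one first replaces it by a $C^3$ Morse approximation as in Theorem~\ref{result}, which has sublevel sets of the same homotopy type, reducing to the non-degenerate case; alternatively one uses the relative Morse--Bott charts of Theorem~\ref{criticalmorse}(iv) and records critical \emph{components} rather than points.

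The step I expect to be the main obstacle is the second one: arranging the refined cover to be simultaneously (a) good, so that every nonempty finite intersection is genuinely contractible, and (b) combinatorially pinned down, so that the vertices of the nerve correspond exactly to the non-degenerate critical points --- all in the infinite-dimensional setting, where contractibility must be verified by an explicit construction rather than read off from dimension. The delicate points are controlling the transverse slices $S_p$ uniformly along long gradient trajectories (the flow is defined for all forward time, but orbits need not be short) and handling the interface neighborhoods where a flow box abuts a Morse chart $\mathcal{U}_c$, where one must exploit the explicit quadratic normal form of Theorem~\ref{generalizedmorse} to see that the intersection deformation retracts onto a face. Everything else --- existence of good covers on paracompact Hilbert manifolds, the Nerve Theorem, and the Morse--Palais normal form --- is standard or already recorded above.
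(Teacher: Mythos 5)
Your proposal shares the governing tool with the paper --- the nerve theorem applied to a good cover of $\Lambda M$ --- but diverges at the crucial step, and the divergence opens a genuine gap. In the paper's argument the first move is a $C^0$-perturbation of $E$ (as in Theorem \ref{approximation}), which floods $\Lambda M$ with non-degenerate critical points so densely that contractible neighborhoods of \emph{critical points alone} already cover $\Lambda M$; the nerve of that cover is then, by construction, indexed by $\mathrm{Crit}(E)$, and the identification with $\Delta$ is immediate. You instead keep the Morse--Palais charts around critical points pairwise disjoint and then cover the regular set separately by flow boxes, passing to a common refinement. As a result the nerve of your refined good cover has vertices of two kinds, and --- precisely because you made the Morse charts disjoint --- every edge of the nerve must pass through a flow-box vertex.

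Your proposed fix, to ``collapse the regular-point members along the flow,'' is the gap, and you correctly flag it as the delicate step. The nerve theorem yields a homotopy equivalence with the nerve of the cover you actually built; there is no general operation that deletes vertices from that nerve while preserving homotopy type. The sublevel retractions in Theorem \ref{criticalmorse} deform the \emph{space} $\Lambda M$, not the \emph{cover}: the sub-family consisting only of Morse charts is no longer a cover of $\Lambda M$, so the nerve theorem does not apply to it, and the combinatorics of how the flow boxes overlap each other and the Morse charts is exactly what you would need to discard. The paper avoids this difficulty entirely by making the critical-point neighborhoods cover $\Lambda M$ by themselves (this is what the $C^0$-perturbation buys), so no flow boxes --- and hence no collapse --- are required. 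If you want to repair your version, the move is not to collapse the nerve after the fact but to invoke Theorem \ref{approximation} first, so that your good cover can be taken to consist only of contractible neighborhoods of (perturbed) non-degenerate critical points.
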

\begin{proof}
A $C^0$-perturbation of the energy functional $E:\Lambda M\to\mathbb{R}$ produces infinitely many critical points. Suppose we form a good cover $\mathcal{U}=\{U_{\alpha_k}\}_{\alpha_k\in A}$ such that each $U_{\alpha_k}$ is a neighborhood of the non-degenerate critical point ${\gamma}_k$. Then, in general, this should be a cover of $\Lambda M$ and the map from the geometric realization of the simplicial space, i.e. the topological nerve of $\{U_{a_k}\}$, to $\Lambda M$ is a homotopy equivalence. Likewise, the geometric realization of the topological nerve $R(\text{Top-Nrv}(\mathcal{U}))$ is homotopy equivalent to the geometric realization of the simplicial set for which the $|A|$-simplices are $|A|$-tuples $(\alpha_0,\dots,\alpha_{|A|})$, whereby the intersection $\bigcap_{k\in\mathbb{Z}}U_{\alpha_k}$ is nonempty and $|A|\to\infty$ under $C^0$-perturbation of $E$.
\end{proof}

Treating $\Lambda M$ as a CW complex, by Theorem \ref{result}, it is given by the union of Baire spaces, namely $M=\bigcup_{\alpha\in A}S^{\infty}$ without information on intersections. The only requirement is that the union of such Baire spaces covers $\Lambda M$. Let $\Sigma=\{S^{\infty},\dots,S^{\infty}\}$ be a finite collection of sets, with cardinality $|A|$. The nerve consists of all sub-collections whose sets have a non-empty common intersection, $\text{Nrv}(\Sigma)=\left\{X\subseteq\Sigma:\bigcap X\ne\emptyset\right\}$, which is an abstract simplicial complex. That is, suppose we form a good cover $\mathcal{U}=\{U_{\alpha_k}\}_{\alpha_k\in A}$ such that each $U_{\alpha_k}$ is a neighborhood of the non-degenerate critical point $\gamma_k$. Since all Hilbert manifolds are paracompact, we may compute homology of the CW model of $\Lambda M$ via \v{C}ech homology:
\begin{equation*} \check{H}_k(\mathcal{U},\mathcal{F})=\varinjlim_{\mathcal{U}\in A}H_k(\mathcal{U},\mathcal{F})
\end{equation*} for $\mathcal{F}$ a presheaf of abelian groups on $\Lambda M$, and $A$ a directed set consisting of all open covers of $\Lambda M$, which is directed by  $\mathcal{U}<\mathcal{V}$ for $\mathcal{V}$ a refinement of $\mathcal{U}$. \v{C}ech homology is similarly given by $\check{H}_k(\Lambda M,\mathcal{F})=\varinjlim_{\mathcal{U}\in A}H_k(\text{Nrv}(\mathcal{U}),\mathcal{F})$.

\section{Integral Cellular Homology and Smith Normal Form for Free Loop Space\label{sec: SNF}}
\begin{lemma}
Every continuous map $f:M\to\mathbb{R}^n$ from a Hilbert manifold can be arbitrarily approximated by a smooth map $g:M\to\mathbb{R}^n$ which has no critical points.
\end{lemma}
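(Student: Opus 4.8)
The plan is to establish the statement in two stages — first \emph{smooth}, then \emph{straighten} — working throughout in the fine $C^0$-topology: given any positive continuous function $\varepsilon\colon M\to(0,\infty)$ we must produce a smooth $g$ with $\|g(x)-f(x)\|<\varepsilon(x)$ for all $x$ and with $dg_x\colon T_xM\to\mathbb{R}^n$ surjective at every point (this surjectivity is what ``no critical point'' means here, since $\dim M=\infty$ while the target is only $\mathbb{R}^n$).

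\emph{Stage 1: smooth approximation.} A Hilbert manifold is metrizable, hence paracompact, and its model space admits smooth bump functions, so $M$ carries smooth partitions of unity. Take a locally finite atlas $\{(U_i,\psi_i)\}$ with $\psi_i(U_i)\subset\mathbf H$, relatively compact shrinkings $K_i\Subset U_i$ covering $M$, and a subordinate smooth partition of unity $\{\rho_i\}$. In each chart, $f\circ\psi_i^{-1}$ is a continuous $\mathbb{R}^n$-valued map, approximated uniformly on $\psi_i(\overline{K_i})$ by a smooth $h_i$ (compose with a finite-rank approximation of the identity on $\mathbf H$, then mollify in the finite-dimensional image). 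Then $g_0:=\sum_i\rho_i\,(h_i\circ\psi_i)$ is smooth, and local finiteness together with pointwise control of each summand on $\mathrm{supp}\,\rho_i$ makes $\|g_0-f\|$ as small as desired.

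\emph{Stage 2: destroying critical points.} Here the infinite-dimensionality is essential: the set $\Sigma\subset\mathrm{Hom}(\mathbf H,\mathbb{R}^n)$ of non-surjective bounded operators has infinite codimension, since a non-surjective operator has range inside some hyperplane $\{v=0\}$ with $v\in(\mathbb{R}^n)^*\setminus\{0\}$, and ``$\mathrm{range}\subseteq\{v=0\}$'' imposes $\dim\mathbf H=\infty$ independent linear conditions while $v$ ranges only over the compact family $\mathbb{P}((\mathbb{R}^n)^*)$. Concretely, fix a chart $\psi\colon U\to\mathbf H$, a splitting $\mathbf H=H_0\oplus H_0^{\perp}$ with $\dim H_0=\infty$, the projection $\pi\colon\mathbf H\to H_0$, and for $T\in\mathrm{Hom}(H_0,\mathbb{R}^n)$ set $g_T:=g_0+\bigl(T\circ\pi\circ\psi\bigr)$ on $U$, so $d(g_T)_x=(dg_0)_x+T\circ\pi$. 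Restricting $T$ to a finite-dimensional family $\mathcal T\cong\mathbb{R}^{Nn}$ (operators factoring through a fixed $N$-dimensional subspace of $H_0$, $N$ large) and applying a parametrized-Sard / finite-dimensional transversality argument over a relatively compact piece $\overline K\Subset U$ — where one need only raise the rank past the finitely many values $0,1,\dots,n-1$ — shows that for an open dense set of arbitrarily small $T$ the map $g_T$ has no critical point on $K$; openness of the submersion condition on $\overline K$ then leaves room to cut $T$ off by a bump function supported in $U$ without disturbing anything far away.

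\emph{Globalization and the main obstacle.} Finally one threads these local corrections together by Whitney's inductive device: with $U_1,U_2,\dots$ and $K_1\Subset U_1,\ K_2\Subset U_2,\dots$ as in Stage 1, clear critical points from $K_1$, then from $K_2$ while keeping the new correction small enough (by openness on the compact set $\overline{K_1}$) not to re-create critical points there, and so on; taking the $i$-th correction of size $<2^{-i}\varepsilon$ keeps the cumulative change within $\varepsilon$, and since $\{K_i\}$ covers $M$ the limiting smooth $g$ is a submersion everywhere and $C^0$-close to $f$. I expect the genuine difficulty to lie exactly in this last step combined with the transversality input of Stage 2: one must run an infinite induction over a non-compact, infinite-dimensional manifold, controlling perturbation sizes so that the process converges in $C^1_{\mathrm{loc}}$ and no previously gained submersion is lost, and one must justify the ``infinite codimension $\Rightarrow$ generic avoidance'' passage without invoking Sard–Smale (which requires a Fredholm hypothesis the map $g$ does not satisfy) — reducing it instead, chart by chart and over compacta, to the classical finite-dimensional Sard theorem applied to the finite-dimensional parameter family $\mathcal T$.
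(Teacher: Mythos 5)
The paper states this lemma without proof (the proof displayed immediately afterward belongs to Theorem~\ref{approximation}, not to the lemma), so there is nothing in the paper to compare against; what follows assesses your proposal on its own terms.

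There is a genuine gap, and it sits precisely where you flag ``the genuine difficulty'' at the end — but it is not a difficulty to be managed, it is a structural impossibility of the chart-and-compacta strategy. Your whole scheme rests on a locally finite atlas together with ``relatively compact shrinkings $K_i\Subset U_i$ covering $M$.'' An infinite-dimensional Hilbert manifold admits no such cover: by Riesz's lemma the closed unit ball of the model space $\mathbf H$ is not compact, so every compact subset of $M$ has empty interior, and by the Baire category theorem no countable union of compact sets can exhaust $M$. With $\{K_i\}$ gone, every place you lean on compactness collapses: the parametrized-Sard step of Stage~2 (which needs a compact $\overline K$ so that a single finite-dimensional parameter slab $\mathcal T\cong\mathbb R^{Nn}$, for one fixed $N$, suffices uniformly over the piece), the openness-of-submersions argument on $\overline K$ that licenses the bump-function cut-off, and the Whitney-style induction of the globalization, which clears one compact piece at a time while shrinking later corrections to protect the pieces already handled. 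Even on a genuine compact $K\subset M$ the Sard input would be delicate, since such a $K$ can have infinite covering dimension (the Hilbert cube embeds in $\ell^2$). Your closing intuition — that non-surjectivity is of infinite codimension in $\mathrm{Hom}(\mathbf H,\mathbb R^n)$, so no Fredholm hypothesis should be needed — is, I think, the right starting point, but it must be cashed out by a global construction that trades Sard-over-compacta for the intrinsic flexibility of $\mathbf H$ itself, for instance via the Eells--Elworthy--Henderson open-embedding theorem (placing $M$ open in $\mathbf H$, smoothing $f$ with explicit control of $dg_0$ on a fixed finite block of coordinates, and adding a bounded submersive correction such as a componentwise $\arctan$ in that block). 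That would be a different proof, not a repair of this one.
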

\begin{theorem}\label{approximation}
A continuous map $f:M\to\mathbb{R}^n$ from a Hilbert manifold may be approximated to create infinitely many local minima and maxima by a small $C^0$-perturbation. This is impossible, in general, for a $C^1$-perturbation.
\end{theorem}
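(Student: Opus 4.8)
I would prove the two halves of the statement separately: a constructive argument for the $C^0$ assertion, and a single explicit counterexample for the $C^1$ assertion. Throughout I would treat the essential case of a real-valued $f$ — the case relevant to the energy functional $E:\Lambda M\to\mathbb R$ — and handle $\mathbb R^n$-valued $f$ by perturbing one coordinate and reading ``local extremum'' off that coordinate function. I may also assume $\dim M\ge 1$, since on a discrete manifold every point is a local extremum and there is nothing to prove.

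\textbf{The $C^0$ half.} Fix $\varepsilon>0$. First I would choose a countable, pairwise disjoint, \emph{locally finite} family of closed balls $\bar B_k=\bar B(p_k,r_k)$, each inside a chart; such a family exists in any positive-dimensional manifold by paracompactness (and there is no finite bound on the number of disjoint open sets one can fit in). By continuity of $f$ at each $p_k$ I would then shrink $r_k$ so that the oscillation of $f$ over $\bar B_k$ is $<\varepsilon/4$. The key device is to \emph{flatten} $f$ on the core of each ball before perturbing: picking a continuous bump $\phi_k$ with $\operatorname{supp}\phi_k\subset B_k$, $0\le\phi_k\le1$ and $\phi_k\equiv1$ on $\bar B(p_k,r_k/2)$, set $g:=f$ off $\bigcup_k B_k$ and $g:=(1-\phi_k)f+\phi_k f(p_k)$ on $B_k$. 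Local finiteness makes $g$ continuous on all of $M$, and $\|g-f\|_{C^0}\le\varepsilon/4$, while $g$ equals the constant $c_k:=f(p_k)$ on each core $\bar B(p_k,r_k/2)$. Now I would add a small ``tent'': with $\tau_k(x):=\max\!\bigl(0,\,1-2\|x-p_k\|/r_k\bigr)$ computed in the chart (so $\operatorname{supp}\tau_k$ is exactly the core), put $\tilde g:=g+\tfrac{\varepsilon}{2}\sum_k(-1)^k\tau_k$. On the core of $B_k$ one has $\tilde g=c_k+(-1)^k\tfrac{\varepsilon}{2}\tau_k$, and since $\tau_k(p_k)=1$ while $\tau_k(x)<1$ for every other core point, $p_k$ is a \emph{strict, attained} local maximum of $\tilde g$ when $k$ is even and a strict local minimum when $k$ is odd. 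Thus $\tilde g$ is continuous, $\|\tilde g-f\|_{C^0}<\varepsilon$, and $\tilde g$ has infinitely many local maxima and infinitely many local minima, as required. (If $f$ is of class $C^r$ I would instead use $C^r$ bumps $\phi_k$ and smooth profiles with strict maxima at the $p_k$; because $g$ is constant on the cores this still produces honest extrema, and the perturbation itself is only required to be $C^0$.)

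\textbf{The $C^1$ half.} Here it suffices to exhibit one Hilbert manifold and one map for which no $C^1$-small perturbation has even a single local extremum. I would take $M=H$ an infinite-dimensional Hilbert space and $f(x)=\langle v_0,x\rangle$ for a fixed unit vector $v_0$: then $Df_x=\langle v_0,\cdot\rangle$ for all $x$, so $\|Df_x\|_{\mathrm{op}}\equiv1$ and $f$ has no critical points. If $g$ is $C^1$ with $\|g-f\|_{C^1}:=\sup_x|g(x)-f(x)|+\sup_x\|Dg_x-Df_x\|_{\mathrm{op}}<1$, then $\|Dg_x\|\ge1-\|Dg_x-Df_x\|>0$ everywhere, so $g$ still has no critical points; but any local extremum of a $C^1$ function on a Hilbert space is a critical point (restrict to lines through the point), so $g$ has no local extrema whatsoever. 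Hence for this $f$ it is impossible to create infinitely many — indeed any — local extrema by a $C^1$-small perturbation, which is the ``in general'' impossibility claimed. (Already $M=\mathbb R$, $f(x)=x$ makes the point.)

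\textbf{Anticipated main obstacle.} The genuinely delicate point is the \emph{attainment} of the extrema in the $C^0$ half. In infinite dimensions a closed ball is not compact, and a merely continuous $f$ may fail to attain its supremum on it and may oscillate near a point faster than any bump can compensate; so naively adding a smooth spike to the raw $f$ need not produce an \emph{attained} local extremum. This is exactly why the construction first flattens $f$ to a constant on each core — after which any tent gives a strict, attained extremum at the center. The remaining points (choosing the ball family disjoint and locally finite so the patched $\tilde g$ is globally continuous, and tracking that each modification is controlled by the small oscillation of $f$ on its ball) are routine given paracompactness of Hilbert manifolds. The $C^1$ half, while conceptually the important one — it is precisely what forces the appearance of ``$C^0$'' in the application to $E$ — is technically immediate from the stability of a uniform positive lower bound on $\|Df\|$ under $C^1$-small perturbations.
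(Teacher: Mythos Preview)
Your argument is correct, and it is considerably more thorough than the paper's own proof. The paper's proof consists of a single local device: in one chart it sets $\sigma(t)=\max(0,\min(2t,1))$ (this should really be $\max(0,\min(2t-1,1))$, as used later in the paper) and defines $f_\varepsilon(x)=f(\sigma(\|x\|/\varepsilon)\,x)$, which radially reparametrizes so that $f_\varepsilon$ is constant on the ball of radius $\varepsilon/2$, agrees with $f$ outside the ball of radius $\varepsilon$, and is $C^0$-close to $f$. The ``infinitely many local extrema'' are then simply the points of the flattened core, each of which is a (non-strict) local maximum and minimum because $f_\varepsilon$ is locally constant there; the $C^1$ impossibility is asserted but not argued.

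Your route differs in three ways. First, you flatten by convex combination $(1-\phi_k)f+\phi_k f(p_k)$ rather than by radial reparametrization of the argument; both achieve the same end, but yours generalizes more cleanly to preserving higher regularity of $f$ when desired. Second, you work on a countable locally finite family of disjoint balls and add alternating tents, producing infinitely many \emph{strict, attained} extrema at distinct points---a strictly stronger conclusion than the paper's, which only yields the degenerate extrema of a constant patch. Third, you actually supply the $C^1$ counterexample (a bounded linear functional on a Hilbert space), which the paper omits entirely. The trade-off is length: the paper's one-ball reparametrization is a two-line trick, while your construction is longer but delivers genuine isolated extrema and closes the $C^1$ gap.
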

\begin{proof}
Working in local charts, let $\sigma(t):=\max (0,\min(2t,1))$ for $t\in\mathbb{R}$. For $\varepsilon>0$, define $f_{\varepsilon}(x)=f(\sigma(\|x\|/\varepsilon)x)$. Then $f_{\varepsilon}$ is constant in the ball of radius $\varepsilon/2$, it coincides with $f$ outside the ball of radius $\varepsilon$, and $\|f-f_{\varepsilon}\|_{\infty}=o(1)$ for $\varepsilon\to 0$.
\end{proof}

\begin{lemma}[Homotopy Invariance]\label{invariance}
The respective homotopy types of a closed Riemannian manifold $M$ and its (free) loop space $\Lambda M$ are invariant under a $C^0$-perturbation of the energy functional $E:\Lambda M\to\mathbb{R}$ in an open neighborhood of a critical point $\gamma\in\Lambda M$ of index $\lambda$. For $\varepsilon>0$, such a local $C^0$-perturbation of $E$ is given by $E_{\varepsilon}[\gamma]=E[\sigma(\|\gamma\|/\varepsilon)\gamma]$ where $\sigma(t):=\max(0,\min(2t,1))$ for $t\in\mathbb{R}$.
\end{lemma}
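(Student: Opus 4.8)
The plan is to notice that the homotopy type of $\Lambda M$ does not actually depend on the choice of energy functional at all --- it is an invariant of the underlying Hilbert manifold --- so the genuine content of the lemma is that the \emph{Morse-theoretic reconstruction} of $\Lambda M$ from the sublevel filtration $\{\Lambda^{\le a}\}$ is insensitive to replacing $E$ by the local $C^0$-perturbation $E_{\varepsilon}$, and likewise that the strong deformation retract $M\cong\Lambda^0\hookrightarrow\Lambda^{\le\delta}$ (property (iv) of the negative gradient flow listed after Theorem \ref{Palais-Smale}) is unaffected. First I would record the two elementary features of the cut-off: since $\sigma(t)=1$ for $t\ge 1/2$, the functional $E_{\varepsilon}$ agrees with $E$ outside the ball of radius $\varepsilon$ about $\gamma$ in a fixed chart, and by Theorem \ref{approximation} one has $\|E-E_{\varepsilon}\|_{\infty}=o(1)$ as $\varepsilon\to 0$. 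In particular only sublevel sets near the critical value $c=E[\gamma]$ are disturbed, and only inside a precompact region of the chart.

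The key steps, in order, are as follows. Step one: check that $E_{\varepsilon}$ still satisfies condition (C) of Theorem \ref{Palais-Smale} and still enjoys the negative-gradient-flow properties (i)--(iv) listed after it; here one uses that the perturbation is supported on a precompact set and that $E$ itself satisfies (C), so any Palais--Smale sequence for $E_{\varepsilon}$ either lies eventually outside the ball --- where $E_{\varepsilon}=E$ --- or is trapped in a compact region. Step two: fix regular values $a<c<b$ of both $E$ and $E_{\varepsilon}$ with $[a,b]$ containing no other critical value of either; from $\|E-E_{\varepsilon}\|_{\infty}=o(1)$ one gets the sandwich $\{E_{\varepsilon}\le a\}\subset\Lambda^{\le a+\eta}\subset\{E_{\varepsilon}\le b\}$ together with the symmetric inclusions for $\eta\to 0$, and property (iii) of Theorem \ref{criticalmorse} --- absence of critical values in a regular interval forces a diffeomorphic deformation retraction --- collapses all of these sublevel sets onto one another up to homotopy. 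Step three: pass to the colimit. Since $\Lambda M=\varinjlim_{a\to\infty}\Lambda^{\le a}$ and, by step two, the inclusion-induced maps between the $E$-sublevels and the $E_{\varepsilon}$-sublevels are filtration-compatible homotopy equivalences, the two colimits are homotopy equivalent, so $\Lambda M$ built from either functional has the same homotopy type. Step four: for $M$, observe that when $\gamma$ is non-constant we have $c=E[\gamma]>0$, so for $\varepsilon$ small enough the perturbed region is disjoint from $\Lambda^{\le\delta}$ for the $\delta>0$ of property (iv); then $M\cong\Lambda^0$ remains a strong deformation retract of $\{E_{\varepsilon}\le\delta\}=\Lambda^{\le\delta}$ via the unchanged flow $\phi_s$, and indeed $E_{\varepsilon}[\gamma]=E[\sigma(\|\gamma\|/\varepsilon)\gamma]$ is literally the identity near $M$.

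I expect the main obstacle to be Step one: a $C^0$-perturbation is emphatically not a $C^1$-perturbation, so $\nabla E_{\varepsilon}$ need not be anywhere near $\nabla E$, and $E_{\varepsilon}$ will in general acquire many new critical points inside the $\varepsilon$-ball --- this is precisely the mechanism behind Theorem \ref{approximation}. One therefore cannot argue by smoothly interpolating gradient flows. The way around it is to localize: every new critical point sits in the precompact set $\{\|x\|\le\varepsilon\}$ of the chart at an energy within $o(1)$ of $c$; outside that set the two functionals and their flows coincide verbatim; and the homotopy comparison is carried out entirely through the $C^0$-sandwich of sublevel sets plus the deformation-retraction statement (iii) of Theorem \ref{criticalmorse}, never through the gradient vector fields themselves. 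Properness of the $E_{\varepsilon}$-sublevels, which is needed to invoke (iii), follows because $E_{\varepsilon}=E$ off a bounded set while $E$ already has compact sublevel behaviour by Palais--Smale.
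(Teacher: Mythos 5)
Your opening observation is the correct one, and it is sharper than anything in the paper's own proof: the homotopy type of $M$ and of $\Lambda M$ simply does not depend on the choice of functional, because replacing $E$ by $E_\varepsilon$ changes neither the underlying Hilbert manifold $\Lambda M$ nor the inclusion of constant loops $M\hookrightarrow\Lambda M$. The paper's argument is in fact essentially circular: it sets $\Lambda M':=E_\varepsilon^{-1}(-\infty,\infty)$, which is literally the same set as $\Lambda M=E^{-1}(-\infty,\infty)$, speaks of a ``perturbed $M$, say $M'$'' that is never actually constructed from anything, asserts $\Lambda M\cong\Lambda M'$ ``under homotopy,'' and then runs the split short exact sequence $0\to\pi_k(\Omega M)\to\pi_k(\Lambda M)\to\pi_k(M)\to 0$ backwards to deduce $\pi_k(M)\cong\pi_k(M')$ --- all of which are identities rather than theorems. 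Your proposal reaches the same (trivially true) conclusion by a more honest route.

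That said, your Step~2 as written has a gap. You posit regular values $a<c<b$ of $E_\varepsilon$ with ``$[a,b]$ containing no other critical value of either'' --- but the whole point of the $C^0$-perturbation, which you correctly identify in your ``main obstacle'' paragraph and which the paper exploits in the proof of Theorem~\ref{main result}, is that $E_\varepsilon$ acquires infinitely many new critical points inside the $\varepsilon$-ball, whose critical values cluster within $o(1)$ of $c$. So no such interval exists for $E_\varepsilon$. Moreover, even for $E$ itself, item (iii) of Theorem~\ref{criticalmorse} only collapses sublevel sets in an interval that contains \emph{no} critical value, and your interval $[a,b]$ contains $c$; passing a critical level genuinely changes the homotopy type of the sublevel set, so the $C^0$-sandwich of inclusions does not by itself ``collapse all of these sublevel sets onto one another.'' The clean resolution is to discard Step~2 entirely and rest on your opening sentence together with Step~3's colimit observation: both $\{\Lambda^{\le a}\}_a$ and $\{E_\varepsilon^{-1}(-\infty,a]\}_a$ are exhaustions of the fixed space $\Lambda M$, so each colimit is $\Lambda M$ itself and there is nothing to prove about $\Lambda M$; and for $M$, your Step~4 observation that $E_\varepsilon=E$ on a neighborhood of the constant loops (when $\gamma$ is non-constant, so $c>0$) suffices to preserve the retraction of $\Lambda^{\le\delta}$ onto $M$ unchanged. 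In short, your instinct is right and improves on the paper; the Morse-theoretic sandwich in Steps~1--2 is unnecessary scaffolding, and, as stated, Step~2 invokes a hypothesis (no critical values of $E_\varepsilon$ in $[a,b]$) that the perturbation is specifically designed to violate.
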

\begin{proof}
The canonical fibration $\Omega M\hookrightarrow\Lambda M\overset{\text{ev}}\longrightarrow M$ admits the section $M\hookrightarrow\Lambda M$ given by the inclusion of constant loops, which implies that the associated long exact sequence of homotopy reduces to the split short exact sequences $0\to\pi_k(\Omega M)\to\pi_k(\Lambda M)\to\pi_k(M)\to 0$. Thus, for $k\ge 1$ we have canonical isomorphisms
\begin{equation*}
\pi_k(\Lambda M)\cong\pi_k(\Omega M)\rtimes\pi_k(M)\cong\pi_{k+1}(M)\rtimes\pi_k(M).
\end{equation*} Then for $\Lambda M=E^{-1}(-\infty,\infty)$, define by $\Lambda M':=E^{-1}_{\varepsilon}(-\infty,\infty)$ the free loop space corresponding to the perturbed $M$, say $M'$, after a $C^0$-perturbation $E_{\varepsilon}$ of the energy functional. For a $C^0$-perturbation, $E_{\varepsilon}$ is constant in the ball of radius $\varepsilon/2$ and it coincides with $E$ outside the ball of radius $\varepsilon$. The point $\gamma\in\Lambda M$ has index $\lambda$. Then let $\alpha$ be the number of critical points, excluding $\gamma$, of index $\lambda$. All of the infinitely many local minima or maxima in the $\varepsilon$-neighborhood created by the $C^0$-perturbation are also of index $\lambda$. Therefore, the number of critical points of index $\lambda$ changes from $1+\alpha$ to $+\infty$ since infinitely many critical points of index $\lambda$ are added in addition to the already existing $(1+\alpha)$-many. It follows that $\Lambda M\cong \Lambda M'$ under homotopy. Likewise, $\pi_k(\Omega M)\cong \pi_k(\Omega M')$.

We will now use this homotopy equivalence $\pi_k(\Lambda M)\cong\pi_k(\Lambda M')$ to show that the homotopy type of $M$ is fixed. From the canonical isomorphisms, $\pi_k(\Lambda M)\cong \pi_k(\Omega M)\rtimes\pi_k(M)\cong \pi_k(\Lambda M')\cong \pi_k(\Omega M')\rtimes\pi_k(M')$. By $\pi_k(\Omega M)\cong \pi_k(\Omega M')$, it follows that $\pi_k(M)\cong\pi_k(M')$. Let $f:M\to M'$, then, because $\pi_k(M,x_0)\cong\pi_k(M',f(x_0))$ for $x_0\in M$, it follows that $f$ is a homotopy equivalence so $M$ and $M'$ are of the same homotopy type. This is similarly seen through the isomorphism $\pi_k(\Omega M)\cong\pi_{k-1}(M)$.
\end{proof} 

Let $Y_k=Y_{k-1}\bigcup_{\alpha_k\in A_k}^{\theta_k} H^{\lambda_k}$ be a manifold $Y_{k-1}$ with $|A_k|$-many $\lambda_k$-handles $H^{\lambda_k}=D^{\dim Y_{k-1}-\lambda_k}\times D^{\lambda_k}$ attached along the embedding map $\theta_k:S^{\lambda_k-1}\times D^{\dim Y_{k-1}-\gamma_k}\to\partial Y_{k-1}$ for $1\le k\le N$ and $1\le \alpha_k\le |A_k|$ with $Y_0:=D^{\infty}$ a Baire space and $Y:=Y_N$. Thus, free loop space has a handle decomposition given by $\Lambda M:=Y=D^{\infty}\bigcup_{k=1}^N\left[\bigcup_{\alpha_k\in{A_k}}^{\theta_k}H^{\lambda_k}\right]$ where $\lambda_k$ is the index of $|A_k|$-many critical points of the Morse functional $E:\Lambda M\to\mathbb{R}.$

There is a deformation retraction of $Y_k=Y_{k-1}\bigcup_{\alpha_k\in A_k}^{\theta_k} H^{\lambda_k}$ onto the core $Y_{k-1}\bigcup_{\alpha_k\in A_k}^{\theta_k}(D^{\lambda_k}\times \{0\})$ so homotopically attaching a $\lambda_k$-handle is the same as attaching a $\lambda_k$-cell. Thus, $Y_k\approx Y_{k-1}\bigcup_{\alpha_k\in A_k}^{\theta_k} (D^{\lambda_k}\times \{0\})\cong Y_{k-1}\bigcup_{\alpha_k\in A_k}^{\theta_k}e^{\lambda_k}$ for $e^{\lambda_k}$ a $\lambda_k$-cell. In the handle decomposition, we collapse each handle $D^m\times D^{n-m}$ to obtain a homotopy equivalent CW complex $X\approx_G\Lambda M$ under homotopy $G:X\to Y$. Thus, let $n_{\lambda_i}$ denote the number of $\lambda_i$-handles of $\Lambda M$. Then $\Lambda M$ has an infinite-dimensional CW structure with $n_{\lambda_i}$-many $\lambda_i$-cells for each $1\le i\le N$.

In particular, we construct the CW complex for $\Lambda M$ as follows:

\begin{enumerate}[label=(\roman*)]
\item Set $X_0:=D^{\infty}$, which is regarded as an $\infty$-cell $D^\infty\times \{0\}$ modulo a point $\{0\}$.
\item Form the $k$-skeleton $X_k$ from $X_{k-1}$ by attaching $\lambda_k$-cells $e_{\alpha}^{\lambda_k}$ via attaching maps $\phi_{\alpha}^k:S^{k-1}\to X_{k-1}$ for $e_{\alpha}^{\lambda_k}$. Here, we abbreviate $X_{\lambda_k}$ by $X_k$.
\item We stop the inductive process with $\Lambda M\approx X=X_N$ for $N< \infty$.
\end{enumerate} 
\begin{definition}
Every cell $e_{\alpha}^k$ in the cell complex $X$ has characteristic map $\Phi_{\alpha}^k: D_{\alpha}^k\to X$ extending the attaching map $\phi_{\alpha}^k$. It is a homeomorphism from the interior of $D_{\alpha}^k$ onto $e_{\alpha}^k$. Namely, $\Phi_{\alpha}^k$ is the composition $D_{\alpha}^k\hookrightarrow X_{k-1}\sqcup_{\alpha} D_{\alpha}^k\to X_k\hookrightarrow X$ where the middle map is a quotient map defining $X_k$. 
\end{definition}
The CW complex $X$ is infinite-dimensional because the $0$-th skeleton $X^0=D^{\infty}$ is infinite-dimensional. We topologize $X$ by saying a set $A\subset X$ is open (resp. closed) if and only if $A\cap X_k$ is open (resp. closed) in $X_k$ for each $k$. That is, a set $A\subset X$ is open (resp. closed) if and only if $\Phi_{\alpha}^{-1}(A)$ is open (resp. closed) in $D_{\alpha}^k$ for each characteristic map $\Phi_{\alpha}$. The first direction follows from continuity of the characteristic maps $\Phi_{\alpha}$. In the other direction, suppose $\Phi_{\alpha}^{-1}(A)$ is open in $D_{\alpha}^k$ for each $\Phi_{\alpha}$. Further suppose by induction on $k$ that $A\cap X_{k-1}$ is open in $X_{k-1}$. Since $\Phi_{\alpha}^{-1}(A)$ is open in $D_{\alpha}^k$ for all $\alpha$, $A\cap X_k$ is open in $X_k$ by definition of the quotient topology on $X_k$. Thus $A$ is open in $X$, from which it follows that $X$ is the quotient space of $\bigsqcup_{k,\alpha}D_{\alpha}^k$. The $(k+1)$-th skeleton is obtain from the $k$-skeleton by attaching $(k+1)$-cells; that is, there is a pushout diagram:
\begin{equation*}
\begin{tikzcd}[column sep=large, row sep=large]
\bigsqcup S^k \arrow{d} \arrow{r}
& X_k \arrow{d} \\
\bigsqcup D^{k+1} \arrow{r}
& X_{k+1}
\end{tikzcd}
\end{equation*}
where the disjoint unions are over all $(k+1)$-cells of $X$.

\begin{lemma}\label{CWcomplex}
Since $X$ is a CW complex, it follows that:
\begin{enumerate}[label=(\roman*)]
\item $H_k(X_n,X_{n-1})=\begin{cases}
\mathbb{Z}^{\#\lambda_n-\text{cells}}, & \text{if } k =n\\
0, & \text{if } k\ne n.
\end{cases}$ \\ That is, $H_k(X_n,X_{n-1})$ is free abelian for $k=n$ with a basis in one-to-one correspondence with the $n$-cells of $X$.
\item $H_k(X_n)=0$ if $k>n$. In particular, if $X$ is finite-dimensional, then $H_k(X)=0$ if $k>\dim (X)$. Note, we exclude the latter condition because $X$ under present consideration is, indeed, infinite-dimensional.
\item The map $H_k(X_n)\to H_k(X)$ induced by the inclusion $i:X_n\hookrightarrow X$ is an isomorphism for $k<n$ and a surjection for $k=n$.
\end{enumerate} 
\end{lemma}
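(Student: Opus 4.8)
This lemma collects the three standard structural facts about cellular homology, so the plan is to run the classical skeletal induction; the only feature demanding attention is that the bottom stage $X_0 = D^{\infty}$ is an infinite-dimensional disk rather than a discrete set of $0$-cells, which I will accommodate simply by noting that $D^{\infty}$ is contractible and hence has the homology of a point. Throughout I would use that each pair $(X_n, X_{n-1})$ is a good pair — the attached cells furnish a collar, so $X_{n-1}$ is a neighborhood deformation retract in $X_n$ — which gives $H_\bullet(X_n, X_{n-1}) \cong \tilde H_\bullet(X_n/X_{n-1})$.

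For (i), I would use the pushout square already displayed, exhibiting $X_n$ as $X_{n-1}$ with the $\lambda_n$-cells $\{e^{\lambda_n}_\alpha\}$ attached along $\phi^n_\alpha : S^{n-1}\to X_{n-1}$: collapsing $X_{n-1}$ sends each $D^n_\alpha$ to a sphere $S^n_\alpha$, so $X_n/X_{n-1}\cong\bigvee_\alpha S^n_\alpha$, the wedge running over the $n$-cells. Combining the good-pair isomorphism with $\tilde H_k\!\left(\bigvee_\alpha S^n_\alpha\right)\cong\bigoplus_\alpha\tilde H_k(S^n)$ then gives $H_k(X_n,X_{n-1})\cong\mathbb{Z}^{\#\{n\text{-cells}\}}$ for $k=n$ and $0$ otherwise, with the basis in one-to-one correspondence with the $n$-cells.

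For (ii) I would induct on $n$: the base case is $H_k(X_0)=H_k(D^{\infty})=0$ for $k>0$, and for the step the long exact sequence of $(X_n,X_{n-1})$ has $H_k(X_{n-1})$ on the left (zero for $k>n$ by the inductive hypothesis) and $H_k(X_n,X_{n-1})$ on the right (zero for $k\ne n$ by (i)), forcing $H_k(X_n)=0$ for $k>n$. For (iii), fix $n$; the long exact sequence of each $(X_{m+1},X_m)$ with $m\ge n$ shows that $H_k(X_m)\to H_k(X_{m+1})$ is an isomorphism whenever $k<n$ (both adjacent relative groups vanish by (i), since $k\ne m$ and $k\ne m+1$), a surjection when $k=n$ and $m=n$ (here $H_n(X_{n+1},X_n)=0$), and an isomorphism for $k=n$ and $m\ge n+1$. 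Since the construction terminates at $X=X_N$ with $N<\infty$ — or, in general, since every singular chain in $X$ has compact image and therefore lies in some $X_m$ by the defining weak topology, so $H_\bullet(X)=\varinjlim_m H_\bullet(X_m)$ — iterating these comparisons yields the claimed isomorphism for $k<n$ and surjection for $k=n$.

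The proof is essentially bookkeeping with the long exact sequences of the skeletal pairs, so I do not anticipate a genuine obstacle; the one point I would be careful to verify rather than quote is that the nonstandard bottom cell $D^{\infty}$ causes no trouble — it is contractible and the pairs $(X_n,X_{n-1})$ remain good pairs, so the classical argument applies verbatim.
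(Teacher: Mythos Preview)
Your proposal is correct and follows the same skeletal long-exact-sequence strategy as the paper, including the key observation that $X_0=D^{\infty}$ is contractible so the base case goes through. The only variation is in part (i): you pass to the quotient $X_n/X_{n-1}\cong\bigvee_\alpha S^n$ via the good-pair property, whereas the paper punctures each $n$-cell at a center point $x_\alpha$ and applies excision to reduce to $\bigoplus_\alpha H_k(D^n_\alpha,D^n_\alpha\setminus\{x_\alpha\})\cong\bigoplus_\alpha\tilde H_{k-1}(S^{n-1}_\alpha)$ --- both are standard textbook routes to the same identification.
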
\label{cellularhomology}
\begin{proof}
(i) Consider the CW pair $(X_n, X_{n-1})$. The skeleton $X_n$ is obtained from $X_{n-1}$ by attaching the $\lambda_n$-cells $(e_{\alpha}^{\lambda_n})_{\alpha}$. Choose a point $x_{\alpha}$ at the center of each $\lambda_n$-cell $e_{\alpha}^{\lambda_n}$. Furthermore, let $A=X_n-\{x_{\alpha}\}_{\alpha}$. Thus, $A$ deformation retracts to $X_{n-1}$ so that $H_k(X_n,X_{n-1})\cong H_k(X_n,A)$. It follows that $H_k(X_n,A)\cong \bigoplus_{\alpha} H_k(D_{\alpha}^n, D_{\alpha}^n-\{x_{\alpha}\})$ by excision of $X_{n-1}$. From the long exact sequence for the pair $(D_{\alpha}^n,D_{\alpha}^n-\{x_{\alpha}\})$, we obtain
\begin{equation*}
H_k(D_{\alpha}^n, D_{\alpha}^n-\{x_{\alpha}\})\cong \tilde{H}_{k-1}(S_{\alpha}^{n-1})\cong\begin{cases}
\mathbb{Z}, & \text{if }k=n\\
0, & \text{if }k\ne n.
\end{cases}
\end{equation*} The first assertion follows at once.\\
(ii) Consider the long exact sequence for the CW pair $(X_n, X_{n-1})$:
\begin{equation*}
\cdots\to H_{k+1} (X_n, X_{n-1})\to H_k(X_{n-1})\to H_k(X_n)\to H_k(X_n,X_{n-1})\to\cdots
\end{equation*} Then if $k+1\ne n$ and $k \ne n$, from (i), it follows that $H_{k+1}(X_n,X_{n-1})=0$ and $H_k(X_n,X_{n-1})=0$. Therefore, $H_k(X_{n-1})\cong H_k(X_n)$. If $k>n$, i.e. for $n\ne k +1$ and $n\ne k$, we have that 
\begin{equation*}
H_k(X_n)\cong H_k(X_{n-1})\cong H_k(X_{n-2})\cong\dots\cong H_k(X_0).
\end{equation*} In the case under consideration, $X_0$ is a Baire space $D^{\infty}$ so $H_k(D^{\infty})=0$ if $k\ne 0$. Then for $k>n$, $H_k(X_n)=0$ as was to be shown. \\
(iii) We prove the statement for an infinite-dimensional CW complex $X$. Let $k<n$ and consider the long exact sequence for the CW pair $(X_{n+1},X_n)$ of sub-complexes:
\begin{equation*}
\cdots\to H_{k+1}(X_{n+1},X_n)\to H_k(X_n)\to H_k(X_{n+1})\to H_k(X_{n+1}, X_n)\to\cdots
\end{equation*} Recall that $k<n$ so $k+1\ne n+1$ and $k\ne n+1$ or, by (i), $H_{k+1}(X_{n+1},X_n)=0$ and $H_k(X_{n+1},X_n)=0$. Then from exactness of the above sequence, $H_k(X_n)\cong H_k(X_{n+1})$ and, by iteration, we obtain:
\begin{equation*}
H_k(X_n)\cong H_k(X_{n+1})\cong H_k(X_{n+2})\cong\dots\cong H_k(X_{n+l})=H_k(X),
\end{equation*} where $l$ is chosen such that $X_{n+l}=X$ which is admissible since the indices of the handles of $X$ are finite by definition and $X:=X_N=X_{N-1}\cup_{\theta} H^{\lambda_N}$ where $\lambda_i<\infty$ for $1\le i\le N$.
\end{proof}

\begin{definition}
The cellular homology $H_*^{CW}(X)$ of the CW complex $X$ under consideration. is defined as the homology of the cellular chain complex $(C_*(X),d_*)$ indexed by the cells of $X$. That is, the $n$-th abelian group in the chain complex is $C_n(X):=H_n(X_n,X_{n-1})=\mathbb{Z}^{\#\lambda_n-\text{cells}}$ with boundary operators $d_n:C_n(X)\to C_{n-1}(X)$ defined by the diagram:

\fontsize{9}{12}\selectfont{
\begin{tikzcd}
 & & & H_n(X_{n+1},X_n) = 0 & \\
H_n(X_{n-1}) = 0 \ar[rd] & & H_n(X_{n+1})\cong H_n(X) \ar[ru] & & \\
 & H_n(X_n) \ar[ru,"i_n"] \ar[rd,"j_n"] & & & \\
H_n(X_{n+1},X_n) = 0 \ar[ru,"\partial_{n+1}"] \ar[rr,"d_{n+1}"] & & H_n(X_n,X_{n-1}) \ar[dr,"\partial_n"'] \ar[rr,"d_n"] & & H_{n-1}(X_{n-1},X_{n-2}) \\
 & & & H_{n-1}(X_{n-1}) \ar[ru,"j_{n-1}"'] & \\
 & & H_{n-1}(X_{n-2}) = 0 \ar[ru] & & \\
\end{tikzcd}}
\end{definition}

The diagonal arrows are due to the long exact sequences of relative homology, and we invoke Lemma \ref{cellularhomology} to conclude $H_n(X_{n-1})=0$, $H_{n-1}(X_{n-2})=0$ and $H_n(X_{n+1})\cong H_n(X)$ in the above diagram. Then we define 
\begin{equation*} 
d_n=j_{n-1}\circ\partial_n:C_n(X)\to C_{n-1}(X)
\end{equation*} and observe that $d_n\circ d_{n+1}=0$. Moreover, $d_n\circ d_{n+1}=j_{n-1}\circ \partial_n \circ j_n\circ \partial_{n+1}=0$ since the composition of two consecutive maps in a long exact sequence is the zero map $\partial_n\circ j_n=0$.

We henceforth use the notations of Section \ref{sec:rathomotopy}. For two real numbers $a,b$ where $b>a$, define the sets $\Lambda^{[a,b]}$ by $\Lambda^{[a,b]}=\{p\in\Lambda M: a\le E(p)\le b\}$. Recall, if $\Lambda^{[a,b]}$ is compact and contains no critical point of $E:\Lambda M\to\mathbb{R}$ then $\Lambda^a$ is diffeomorphic to $\Lambda^b$. On the other hand, suppose that $\Lambda^{[a,b]}$ is compact and contains exactly one non-degenerate critical point of $E$. If the index of the critical point is $\lambda$ then $\Lambda^b$ is obtained from $\Lambda^a$ by successively attaching a handle of index $\lambda$ and a collar.
\begin{proposition}
Every smooth Hilbert manifold admits a Morse function $E:\Lambda M\to\mathbb{R}$ such that $\Lambda^a$ is compact for every $a\in\mathbb{R}$.
\end{proposition}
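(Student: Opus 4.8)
The plan is to produce the function in two stages: first build a smooth exhaustion $f:\Lambda M\to[0,\infty)$ with every sublevel set $\Lambda^a:=\{f\le a\}$ compact, and then perturb $f$ into a Morse function without destroying that compactness. (The notation $\Lambda M$, $E$, $\Lambda^a$ is retained for consistency; the argument uses only that $\Lambda M$ is a paracompact separable smooth Hilbert manifold, which is the case for the free loop space of a closed $M$.) For the first stage, since $\Lambda M$ is paracompact choose a locally finite open cover $\{U_j\}_{j\ge 1}$ by charts with a subordinate smooth partition of unity $\{\rho_j\}$ and set $f_0:=\sum_j j\,\rho_j$; then $f:=f_0+E$ is smooth and bounded below, and by condition (C) of Theorem~\ref{Palais-Smale} the negative gradient flow $\phi_s$ is globally defined and keeps each set $\{f\le a\}$ from escaping to infinity, so $f$ is proper. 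Properness together with boundedness below forces $f^{-1}[0,a]$ to be compact for every $a$, which is exactly the desired property of the sublevel sets.

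For the second stage, invoke the local normal form of the generalized Morse--Palais Lemma (Theorem~\ref{generalizedmorse}) together with the genericity of Morse functions on Hilbert manifolds (Sard--Smale): the set of perturbations $f_v:=f+v$ that are small in the fine $C^2$-topology and have only non-degenerate critical points is residual, hence nonempty. Choose such a $v$ with $\sup_{\Lambda M}|v|<1$. Then $f_v$ is Morse and $f_v^{-1}[0,a]\subset f^{-1}[0,a+1]$ is a closed subset of a compact set, hence compact; relabelling $E:=f_v$ yields a Morse function whose sublevel set $\Lambda^a=E^{-1}[0,a]$ is compact for every $a$. The finiteness of the indices supplied by the handle decomposition of $\Lambda M$ ensures this $E$ is a Morse function of the type used in the preceding sections.

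The main obstacle is the second stage. The Sard--Smale theorem is intrinsically local, so one must carry out the perturbation globally while controlling it at every critical point of $f$ at once. This is handled inductively along the compact exhaustion $\{f^{-1}[0,k]\}_{k\ge 1}$ from the first stage: on each compact piece the needed perturbation can be taken arbitrarily $C^2$-small, and the flow of $-\nabla f$ is used to glue successive perturbations without creating spurious critical points, the compactness of $\Lambda^a$ being precisely what makes the induction and the gluing converge to a global Morse function.
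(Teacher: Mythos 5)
The argument fails at the first stage, and in fact the proposition as stated cannot hold for an infinite-dimensional Hilbert manifold such as $\Lambda M$. On a manifold modeled on an infinite-dimensional Hilbert space $H$, any sublevel set $\{f\le a\}$ with nonempty interior --- which occurs for every $a$ above the infimum of a continuous $f$ --- contains an open ball in some chart, and open balls in $H$ are not precompact because $H$ is not locally compact (Riesz's lemma). Hence no continuous function on $\Lambda M$ has compact sublevel sets except at levels below its infimum, and your proposed $f=f_0+E$ is no exception. The sentence where you claim that condition~(C) ``keeps each set $\{f\le a\}$ from escaping to infinity, so $f$ is proper'' conflates the Palais--Smale condition with properness: condition~(C) asserts compactness only of sequences along which $\|\nabla f\|\to 0$ with bounded $f$-values, which yields compactness of $\text{Crit}(E)\cap\Lambda^{\le a}$, not of $\Lambda^{\le a}$ itself. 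The reduction ``properness together with boundedness below forces $f^{-1}[0,a]$ to be compact'' is a locally-compact fact and is precisely what breaks here.

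The second stage (a Sard--Smale-type genericity of Morse functions on a separable Hilbert manifold, carried out inductively along a compact exhaustion) is a legitimate tool in principle, but it presupposes the compact exhaustion that stage one cannot supply. For what it is worth, the paper states this proposition without any proof; as phrased it is the finite-dimensional exhaustion lemma (every finite-dimensional manifold admits a proper Morse function), and in the Hilbert-manifold setting the appropriate substitute is the Palais--Smale condition~(C) already recorded in Theorem~\ref{Palais-Smale}, which is exactly what the subsequent handle-decomposition arguments actually use. Your proof attempt therefore does not establish the claim, and the claim itself should be weakened to a Palais--Smale (or, at most, paracompact-sublevel) statement before any proof is possible.
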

\begin{theorem}\label{handledecomposition}
Let $E:\Lambda M\to\mathbb{R}$ be a Morse function on a paracompact manifold $\Lambda M$ such that each $\Lambda^a$ is paracompact. Let $p_1,p_2,\dots,p_k,\dots$ be the critical points of $E$ and let the index of $p_i$ be $\lambda_i$. Then $\Lambda M$ has a handle decomposition which admits a handle of index $\lambda_i$ for each $i$. 
\end{theorem}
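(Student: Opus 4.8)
The plan is to run the classical Morse-theoretic handle-attachment argument in the Hilbert-manifold setting, using the negative gradient flow $\tfrac{d}{ds}\phi_s=-\nabla E(\phi_s)$ as a gradient-like vector field and the generalized Morse Lemma (Theorem \ref{generalizedmorse}) for local normal forms near critical points, and then passing to a colimit over the filtration $\{\Lambda^{\le a}\}$.

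First I would organize the critical values. Since $E$ is Morse and each $\Lambda^a$ is paracompact, the Palais--Smale condition (C) (Theorem \ref{Palais-Smale}) together with property (i) of the gradient flow forces $\mathrm{Crit}(E)\cap\Lambda^{\le a}$ to be a finite set for every $a$; hence the critical values form a discrete increasing sequence $0=c_0<c_1<c_2<\cdots$, with finitely many critical points $p_i$ on each level set, each of finite index $\lambda_i$. Choose regular values $a_i$ with $c_i<a_i<c_{i+1}$ and set $\Lambda_i:=\Lambda^{\le a_i}$, so that $\Lambda M=\bigcup_i\Lambda_i=\varinjlim_i\Lambda_i$, with $\Lambda_0$ deformation retracting onto $\Lambda^0\equiv M$ by property (iv) (or onto a single $\infty$-cell $D^{\infty}$ after a further retraction, cf.\ Theorem \ref{result}). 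Across any interval $[a,b]$ containing no critical value, property (iii) says $\phi_s(\Lambda^{\le b})\subset\Lambda^{\le a}$ for $s$ large, and this exhibits $\Lambda^{\le a}$ as a deformation retract of, and diffeomorphic to, $\Lambda^{\le b}$ (Theorem \ref{criticalmorse}(iii)); thus the filtration changes only when we cross some $c_{i+1}$.

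Next I would analyze crossing a single critical value $c$. Let $p^{(1)},\dots,p^{(r)}$ be the critical points at level $c$, of indices $\lambda_1,\dots,\lambda_r$. Distinct non-degenerate critical points are isolated, so near each $p^{(j)}$ Theorem \ref{generalizedmorse} provides a $C^{k-2}$ chart, with pairwise disjoint supports, in which $E=c-\|Px\|^2+\|x-Px\|^2$ and $\dim\operatorname{im}P=\lambda_j$. Modifying the negative gradient flow inside these disjoint charts and using a partition of unity (available since $\Lambda M$ is paracompact) to cut off the modification away from them, one shows that $\Lambda^{\le c+\varepsilon}$ deformation retracts onto $\Lambda^{\le c-\varepsilon}$ with $r$ disjoint handles attached, the $j$-th a product $D^{\lambda_j}\times D^{\mathrm{codim}}$ glued along an embedding $S^{\lambda_j-1}\times D^{\mathrm{codim}}\hookrightarrow\partial\Lambda^{\le c-\varepsilon}$ read off from the negative eigenspace of $d^2E[p^{(j)}]$. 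In the infinite-dimensional situation the complementary factor is an infinite-dimensional disk, and the required collar on the attaching region is furnished by the Hilbert structure and completeness of the $H^1$-metric (Klingenberg's proposition). Iterating over $i$ and taking the colimit then yields $\Lambda M$ assembled from $\Lambda_0$ by attaching exactly one handle of index $\lambda_i$ for each critical point $p_i$, which is the claimed handle decomposition.

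The main obstacle is precisely the handle-attachment step in infinite dimensions: one must verify that the modified flow produces a genuine handle --- a locally trivial product neighborhood with an honest collar on its attaching sphere-bundle --- rather than merely a homotopy equivalence. This is where the $C^3$ hypothesis enters (so that Theorem \ref{generalizedmorse} yields an actual $C^{k-2}$ diffeomorphism onto a normal-form neighborhood), where paracompactness of each $\Lambda^a$ is used to globalize the cut-off of the modified vector field, and where completeness of the metric guarantees that $\phi_s$ is defined for all $s\ge0$ and that the normal exponential map gives the collar. Handling several critical points on one level simultaneously requires only that the normal-form charts be chosen with pairwise disjoint supports, which is possible since the critical points are isolated; everything else is the routine bookkeeping of the deformation-retraction estimates.
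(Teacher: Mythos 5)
Your proposal follows essentially the same route as the paper's own proof: interleave regular values $a_k$ between the critical values, slice $\Lambda M$ into slabs $\Lambda^{[a_k,a_{k+1}]}$, note that each slab contains only finitely many critical points, and attach one handle of the appropriate index for each. Where you do improve on the paper's sketch is in correctly deriving the finiteness of critical points per slab from the Palais--Smale condition (C) and non-degeneracy (the paper's appeal to paracompactness alone does not actually yield this), and in spelling out the Morse--Palais normal form and modified gradient flow behind the handle attachment, which the paper leaves as a bare assertion.
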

\begin{proof}
Consider the Morse function $E:\Lambda M\to\mathbb{R}$. Let $c_1<c_2<\dots<c_k<\dots$ be the $m$ critical values of $E$. Then consider the sequence of real numbers $\{a_i\}$ for which $c_k<a_k<c_{k+1}$. For real $a,b$ with $b>a$, we let $\Lambda^{[a,b]}=\{p\in\Lambda M:a\le E[p]\le b\}$. Consequently, $\Lambda^{a_{k+1}}=\Lambda^{a_k}\cup\Lambda^{[a_k,a_{k+1}]}$ and $\Lambda M=\bigcup_{k=0}^{m-1}\Lambda^{[a_k,a_{k+1}]}$. The set $\Lambda^{[a_k,a_{k+1}]}$ is paracompact so it only contains finitely many critical points of $E$. If $\lambda_{k+1}$ is the index of $c_{k+1}$, it has the homotopy type of a handle of index $\lambda_{k+1}$.
\end{proof}
\begin{corollary}
Every smooth manifold has the homotopy type of a CW complex.
\end{corollary}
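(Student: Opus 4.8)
The plan is to deduce this directly from Theorem \ref{handledecomposition} together with the observation, already recorded in the discussion preceding that theorem, that a handle of index $\lambda$ deformation retracts onto its core $D^{\lambda}\times\{0\}$, so that up to homotopy a handle decomposition is a cell decomposition.

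First I would equip the given smooth manifold $M$ with a Morse function $f\colon M\to\mathbb{R}$ all of whose sublevel sets $M^{\le a}=f^{-1}(-\infty,a]$ are compact; such a function exists by the Proposition preceding Theorem \ref{handledecomposition} (concretely, after a Whitney embedding $M\hookrightarrow\mathbb{R}^N$ one may take $f(x)=|x-x_0|^2$, which is proper and, for generic $x_0$, Morse). Since a manifold is paracompact and each compact sublevel set is a fortiori paracompact, the hypotheses of Theorem \ref{handledecomposition} are satisfied, and that theorem yields a handle decomposition $M=M_0\cup\bigcup_k H^{\lambda_k}$ in which one handle $H^{\lambda_k}=D^{\dim M-\lambda_k}\times D^{\lambda_k}$ of index $\lambda_k=\lambda(p_k)$ is attached for each critical point $p_k$ of $f$, along an embedding of $S^{\lambda_k-1}\times D^{\dim M-\lambda_k}$ into the boundary of the previously constructed stage $M_{k-1}$.

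Next I would pass from handles to cells exactly as in the construction carried out above for $\Lambda M$: each stage $M_k=M_{k-1}\cup_{\theta_k}H^{\lambda_k}$ deformation retracts onto its core $M_{k-1}\cup_{\theta_k}(D^{\lambda_k}\times\{0\})\cong M_{k-1}\cup_{\varphi_k}e^{\lambda_k}$, where $\varphi_k\colon S^{\lambda_k-1}\to M_{k-1}$ is $\theta_k$ restricted to $S^{\lambda_k-1}\times\{0\}$ followed by the retraction. Applying this inductively, and choosing at each step the retraction of $M_k$ onto its core to be the identity on $M_{k-1}$, produces at each finite stage a CW complex $X_k$ together with a homotopy equivalence $X_k\simeq M_k$ extending the one for $X_{k-1}\simeq M_{k-1}$. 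When $f$ has only finitely many critical points this already gives $M\simeq X=X_N$, a finite CW complex. In general one forms the mapping telescope (homotopy colimit) $X=\varinjlim_k X_k$; since each inclusion $X_k\hookrightarrow X_{k+1}$ is a CW inclusion and a compact subset of $M$ lies in some $M_k$, the compatible equivalences assemble to a homotopy equivalence $X\simeq\varinjlim_k M_k=M$, exhibiting $M$ as homotopy equivalent to a CW complex, which is the assertion of the corollary.

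The main difficulty is organizational rather than conceptual: one must ensure the stage-by-stage homotopy equivalences $X_k\simeq M_k$ are genuinely compatible so as to pass to the colimit, which is handled by the compatible choice of core retractions together with the standard fact that if $A\hookrightarrow Y$ is a cofibration and $\varphi\simeq\varphi'\colon S^{n-1}\to A$ then $Y\cup_{\varphi}D^n\simeq Y\cup_{\varphi'}D^n$ rel $Y$. The only other point needing a word is the existence of a proper Morse function when $M$ is non-compact, which is classical (e.g. via the embedding argument above).
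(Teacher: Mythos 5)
Your proof follows the same conceptual route the paper intends—pass through Theorem \ref{handledecomposition} to a handle decomposition, then deformation retract each handle onto its core to get a CW structure—so the approach matches. However, the paper's own proof is a single sentence, ``The set $\Lambda^b$ is homotopy equivalent to $\Lambda^a$ with a $1$-cell attached,'' which is both overly terse and apparently wrong as stated: the cell attached on crossing a critical level should have dimension equal to the \emph{index} of the critical point, not $1$, and the corollary is about an arbitrary smooth manifold $M$ rather than about sublevel sets $\Lambda^a\subset\Lambda M$ of the energy functional. Your version supplies what the paper omits: the existence of a proper Morse function on a possibly non-compact $M$ (via the Whitney-embedding distance-squared trick), the inductive handle-to-cell replacement, the need for compatible stage-by-stage homotopy equivalences using the cofibration fact about homotopic attaching maps, and, crucially, the passage to the colimit (mapping telescope) when there are infinitely many critical points. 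That colimit step is exactly where the paper's one-liner is silent, and it is the only nontrivial point in making this corollary rigorous for non-compact manifolds, so including it is a genuine improvement rather than mere elaboration.
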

\begin{proof}
The set $\Lambda^b$ is homotopy equivalent to $\Lambda^a$ with a $1$-cell attached.
\end{proof}

\begin{definition}\label{Smithnormal}
Let $A\in\text{Mat}_{m,n}(\mathbb{Z})$ be a matrix over the principal ideal domain $\mathbb{Z}$. Then the Smith normal form is a factorization $A=UDV$ for which:
\begin{enumerate}[label=(\roman*)]
\item $D\in\text{Mat}_{m,n}(\mathbb{Z})$ is diagonal where $d_{i,j}=0$ if $i\ne j$.
\item Every diagonal entry of $D$ divides the next, i.e. $d_{i,i}|d_{i+1,i+1}$. Such diagonal entries are called the elementary divisors of $A$.
\item $U\in\text{Mat}_{m,m}(\mathbb{Z})$ and $V\in\text{Mat}_{n,n}(\mathbb{Z})$ are invertible over $\mathbb{Z}$. That is, $\det U,\det V=\pm 1$ for $U,V$ in $SL$(---,$\mathbb{Z})$.
\end{enumerate} 
\end{definition}
Similarly, for $A\in\text{Mat}_{m,n}(\mathbb{Z})$ an integer matrix, the Smith normal form $A=UDV$ may be written as $D=U^{-1}AV^{-1}$, which is permissible by the invertibility of $U$ and $V$. Here, the product $U^{-1}AV^{-1}$ is
\begin{equation}
SNF(A)=\begin{bmatrix}\alpha_1 & 0 & 0 & & \cdots & & 0\\0& \alpha_2 & 0 & & \cdots & & 0\\ 0 & 0 & \ddots & & & & 0\\ \vdots& & & \alpha_r & & & \vdots \\ & & & & 0 & & & \\ & & & & & \ddots & \\ 0 & & & \cdots & & & 0 \end{bmatrix}
\end{equation} where the diagonal elements $\alpha_i$, unique up to multiplication by a unit of the principal ideal domain, satisfy $\alpha_i|\alpha_{i+1}$ for all $1\le i <r$. The elements $\alpha_i$ are the elementary divisors of $A$, determined by
\begin{equation}
\alpha_i=\frac{d_i(A)}{d_{i-1}(A)}
\end{equation} where $d_i(A)$, known as the $i$-\textit{th determinant divisor}, is the greatest common divisor of all $i\times i$ matrix of minors of $A$, and $d_0(A):=1$.
\begin{remark}
The Smith normal form exists for matrices over arbitrary principal ideal domains $R$ whereby $U$ and $V$ must be invertible over $R$ so that their determinants are any units in $R$. The Smith form is unique up to multiplication by units in $R$.
\end{remark}
\begin{lemma}[Omar Camarena]\label{cokernel}
Let the non-zero elementary divisors of an $m\times n$ integer matrix $A$ be $\alpha_1,\dots,\alpha_r$ where $r:=\text{rank }(A)$. Then $\text{coker}(A):=\mathbb{Z}^m/\text{im}(A)\cong\bigoplus_{i=1}^r\mathbb{Z}/a_i\oplus\mathbb{Z}^{m-r}$.
\end{lemma}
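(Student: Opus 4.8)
The plan is to deduce the statement directly from the Smith normal form recalled in Definition \ref{Smithnormal}. Write $A = UDV$ with $U \in \text{Mat}_{m,m}(\mathbb{Z})$ and $V \in \text{Mat}_{n,n}(\mathbb{Z})$ invertible over $\mathbb{Z}$, and $D \in \text{Mat}_{m,n}(\mathbb{Z})$ the diagonal matrix whose first $r = \text{rank}(A)$ diagonal entries are the non-zero elementary divisors $\alpha_1 \mid \alpha_2 \mid \cdots \mid \alpha_r$ and whose remaining entries vanish. Viewing each matrix as a homomorphism of free abelian groups, the key observation is that $U$ and $V$ induce \emph{automorphisms} of $\mathbb{Z}^m$ and $\mathbb{Z}^n$ respectively, since $\det U = \det V = \pm 1$ implies $U^{-1}, V^{-1}$ also have integer entries.

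First I would reduce $\text{coker}(A)$ to $\text{coker}(D)$. Because $V : \mathbb{Z}^n \to \mathbb{Z}^n$ is surjective, $\text{im}(DV) = \text{im}(D)$ as subgroups of $\mathbb{Z}^m$, hence $\text{im}(A) = \text{im}(UDV) = U(\text{im}\,D)$. The automorphism $U : \mathbb{Z}^m \to \mathbb{Z}^m$ therefore carries $\text{im}(D)$ bijectively onto $\text{im}(A)$, so the composite $\mathbb{Z}^m \xrightarrow{U} \mathbb{Z}^m \twoheadrightarrow \mathbb{Z}^m/\text{im}(A)$ has kernel exactly $U^{-1}(\text{im}\,A) = \text{im}(D)$ and thus descends to an isomorphism $\mathbb{Z}^m/\text{im}(D) \xrightarrow{\ \sim\ } \mathbb{Z}^m/\text{im}(A) = \text{coker}(A)$.

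Then I would compute $\mathbb{Z}^m/\text{im}(D)$ by hand. Letting $e_1,\dots,e_m$ be the standard basis of $\mathbb{Z}^m$, the columns of $D$ are $\alpha_1 e_1, \dots, \alpha_r e_r$ followed by zero columns, so $\text{im}(D) = \alpha_1\mathbb{Z} \oplus \cdots \oplus \alpha_r\mathbb{Z} \oplus 0 \oplus \cdots \oplus 0 \subset \mathbb{Z}^m$. Quotienting coordinatewise gives $\mathbb{Z}^m/\text{im}(D) \cong \bigoplus_{i=1}^{r} \mathbb{Z}/\alpha_i\mathbb{Z} \oplus \mathbb{Z}^{m-r}$, and composing with the isomorphism of the previous paragraph yields the claim (here the $\alpha_i$ are the $a_i$ of the statement).

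There is no genuine obstacle; the only points requiring care are bookkeeping ones already furnished by the construction of the Smith normal form recalled above — namely that $D = U^{-1}AV^{-1}$ has exactly $r = \text{rank}(A)$ non-zero diagonal entries and that these are precisely the elementary divisors $\alpha_i = d_i(A)/d_{i-1}(A)$. One should also remark that the resulting decomposition is independent of the choice of $U$ and $V$: the $\alpha_i$ are unique up to units in $\mathbb{Z}$, i.e. up to sign, and $\mathbb{Z}/\alpha_i\mathbb{Z} \cong \mathbb{Z}/(-\alpha_i)\mathbb{Z}$, so $\text{coker}(A)$ is well-defined as an abstract abelian group, which is all that is needed for the cellular homology computations that follow.
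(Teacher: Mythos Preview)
Your proposal is correct and follows essentially the same approach as the paper: use the Smith normal form $A=UDV$ to identify $\mathrm{coker}(A)\cong\mathrm{coker}(D)$ via the automorphism $U$ (the paper writes the inverse map $x+\mathrm{im}(A)\mapsto U^{-1}x+\mathrm{im}(D)$), and then read off $\mathrm{coker}(D)$ from the standard basis as $\bigoplus_{i=1}^r\mathbb{Z}/\alpha_i\oplus\mathbb{Z}^{m-r}$. Your write-up is in fact more detailed than the paper's, which leaves the computation of $\mathrm{coker}(D)$ implicit after noting that $\mathrm{im}(D)$ is spanned by $\alpha_1 e_1,\dots,\alpha_r e_r$.
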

\begin{proof}
Consider the Smith normal form $UDV$ of $A$. Then $\text{coker}(A)\cong\text{coker}(D)$ under the map $x+\text{im}(A)\mapsto U^{-1}x+\text{im}(D)$. To compute $\text{coker}(D)$, if $e_1,\dots,e_m$ is the standard basis of $\mathbb{Z}^m$, then the image of $D$ is spanned by $a_1e_1,\dots,a_re_r$.
\end{proof}
\begin{theorem}[Omar Camarena]\label{Smithhomology}
The homology of a complex $\mathbb{Z}^n\overset{A}\longrightarrow\mathbb{Z}^m\overset{B}\longrightarrow\mathbb{Z}^k$ (with $BA=0$) is $\ker(B)/\text{im}(A)\cong\bigoplus_{i=1}^r\mathbb{Z}/{\alpha}_i\oplus\mathbb{Z}^{m-r-s}$ where $r:=\text{rank(B)}$ and $\alpha_1,\dots,\alpha_r$ are the non-zero elementary divisors of $A$.
\end{theorem}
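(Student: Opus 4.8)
The plan is to reduce the computation to Lemma \ref{cokernel}, which already describes the cokernel $\mathbb{Z}^m/\text{im}(A)$, by interposing the intermediate subgroup $\ker(B)$ and exploiting the fact that the resulting outer quotient is free abelian. First I would record the structural facts used throughout. Since $\ker(B)$ is a subgroup of the free abelian group $\mathbb{Z}^m$ it is itself free abelian; the quotient $\mathbb{Z}^m/\ker(B)$ is isomorphic to $\text{im}(B)\subseteq\mathbb{Z}^k$ and is therefore free abelian of rank $s:=\text{rank}(B)$, so $\ker(B)$ has rank $m-s$. Because $BA=0$ we have $\text{im}(A)\subseteq\ker(B)$, and $\text{im}(A)$ has rank $r:=\text{rank}(A)$, which is precisely the number of non-zero elementary divisors $\alpha_1\mid\cdots\mid\alpha_r$ of $A$; in particular $r\le m-s$, so $m-r-s\ge 0$ and the statement even makes sense.

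Next I would invoke the short exact sequence of finitely generated abelian groups associated to the chain of inclusions $\text{im}(A)\subseteq\ker(B)\subseteq\mathbb{Z}^m$, namely
\begin{equation*}
0\longrightarrow \ker(B)/\text{im}(A)\longrightarrow \mathbb{Z}^m/\text{im}(A)\longrightarrow \mathbb{Z}^m/\ker(B)\longrightarrow 0.
\end{equation*}
Its right-hand term is free abelian, hence projective, so the sequence splits and yields $\mathbb{Z}^m/\text{im}(A)\cong\big(\ker(B)/\text{im}(A)\big)\oplus\mathbb{Z}^{s}$. By Lemma \ref{cokernel}, the middle term is $\bigoplus_{i=1}^{r}\mathbb{Z}/\alpha_i\oplus\mathbb{Z}^{m-r}$.

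To conclude I would compare invariants on the two sides of the splitting. Writing $\ker(B)/\text{im}(A)\cong T\oplus\mathbb{Z}^{f}$ with $T$ its finite torsion subgroup, the isomorphism above becomes $T\oplus\mathbb{Z}^{f+s}\cong\bigoplus_{i=1}^{r}\mathbb{Z}/\alpha_i\oplus\mathbb{Z}^{m-r}$. Uniqueness in the structure theorem for finitely generated abelian groups then forces $T\cong\bigoplus_{i=1}^{r}\mathbb{Z}/\alpha_i$ (adopting the convention $\mathbb{Z}/1=0$ for any $\alpha_i=1$) together with $f+s=m-r$, i.e. $f=m-r-s$. Hence $\ker(B)/\text{im}(A)\cong\bigoplus_{i=1}^{r}\mathbb{Z}/\alpha_i\oplus\mathbb{Z}^{m-r-s}$, as required.

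The argument is essentially formal once Lemma \ref{cokernel} is available, so the only genuine care needed is bookkeeping: keeping the roles of $r=\text{rank}(A)$ and $s=\text{rank}(B)$ straight, and noting that the $\alpha_i$ appearing are the elementary divisors of $A$ as a map into all of $\mathbb{Z}^m$, not merely into $\ker(B)$. An alternative that makes this last point transparent is to apply the stacked-basis theorem (Smith normal form) directly to the inclusion $\text{im}(A)\subseteq\ker(B)$: because $\mathbb{Z}^m/\ker(B)$ is torsion-free, $\ker(B)$ is a direct summand of $\mathbb{Z}^m$, so the invariant factors of $\text{im}(A)$ inside $\ker(B)$ coincide with those inside $\mathbb{Z}^m$, namely $\alpha_1,\dots,\alpha_r$, and one reads the quotient off a stacked basis. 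Either way, the expected obstacle is purely notational rather than mathematical.
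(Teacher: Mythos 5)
Your argument is correct and reaches the same conclusion by a mildly different route. The paper descends $B$ to $\bar{B}:\mathbb{Z}^m/\text{im}(A)\to\mathbb{Z}^k$, observes that $\ker(B)/\text{im}(A)\cong\ker(\bar{B})$, and then analyzes $\ker(\bar{B})$ directly: since $\mathbb{Z}^k$ is torsion-free, all the torsion of $\mathbb{Z}^m/\text{im}(A)\cong\bigoplus_{i=1}^r\mathbb{Z}/\alpha_i\oplus\mathbb{Z}^{m-r}$ lies in $\ker(\bar{B})$, and the restriction of $\bar{B}$ to the free summand has kernel of rank $(m-r)-s$. You instead interpose the third-isomorphism short exact sequence
\begin{equation*}
0\longrightarrow \ker(B)/\text{im}(A)\longrightarrow \mathbb{Z}^m/\text{im}(A)\longrightarrow \mathbb{Z}^m/\ker(B)\longrightarrow 0,
\end{equation*}
note that the right-hand term is free of rank $s$, split the sequence, and match torsion and free ranks using uniqueness in the structure theorem. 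Both proofs feed Lemma \ref{cokernel} into the same decomposition, so the key lemma is shared; your version makes explicit that $\ker(B)/\text{im}(A)$ is a direct summand of $\mathbb{Z}^m/\text{im}(A)$ with free complement of rank $s$, which is a cleaner justification of the rank count than the paper's implicit appeal to the fact that the image of $\bar{B}$ has rank $s$. Your closing remark about bookkeeping is also well taken, and worth stating more forcefully: the theorem as displayed sets $r:=\text{rank}(B)$, but the $\alpha_i$ are defined to be the non-zero elementary divisors of $A$, of which there are $\text{rank}(A)$ many; the paper's own proof uses $r$ for $\text{rank}(A)$ and introduces $s$ for $\text{rank}(B)$. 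Read literally the statement is incoherent unless $\text{rank}(A)=\text{rank}(B)$, so the intended reading is $r=\text{rank}(A)$ and $s=\text{rank}(B)$, which is the convention you silently (and correctly) adopt.
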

\begin{proof}
The boundary map $B$ is zero on $im(A)$ so it descends to a homomorphism $\bar{B}:\mathbb{Z}^m/\text{im}(A)\to\mathbb{Z}^k$. We will now show that $\ker(B)/\text{im}(A)$ is isomorphic to $\ker(\bar{B})$. From Lemma \ref{cokernel}, $\mathbb{Z}^m/\text{im}(A)\cong\bigoplus_{i=1}^r\mathbb{Z}/\alpha_i\oplus\mathbb{Z}^{m-r}$. The target $\mathbb{Z}^k$ of $B$ is free so that the kernel contains all of the torsion of its domain group. It follows that
\begin{equation*}
\ker(\bar{B})=\bigoplus_{i=1}^r\mathbb{Z}/\alpha_i\oplus\ker\left(\bar{B}|_{\mathbb{Z}^{m-r}}\right).
\end{equation*} The kernel of the restriction of $\bar{B}$ to its free part is also free and, thus, the rank is 
\begin{equation}
\text{rank }\ker(\bar{B})=m-r-s
\end{equation} for $s$ the rank of $\text{im}(B)=\text{im}(\bar{B})=\text{im}\left(\bar{B}|_{\mathbb{Z}^{m-r}}\right)$.
\end{proof}

\begin{theorem} [Main Result]\label{main result}
Every closed Riemannian manifold $M$ admits infinitely many geometrically distinct, non-constant, prime closed geodesics.
\end{theorem}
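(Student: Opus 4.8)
The plan is to dispose of the multiply connected case by a fundamental-group argument, reduce everything else to the simply connected case, and there run a dichotomy on the rational homotopy type of $M$, pushing all but one family of manifolds into the Gromoll--Meyer criterion (Theorem~\ref{Gromoll/Meyer}) through unboundedness of the Betti numbers $b_k(\Lambda M;\mathbb{F}_p)$.

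\emph{Step 1: the fundamental group.} If $\pi_1(M)$ is infinite I would invoke Theorem~\ref{Cartan}: each nontrivial free homotopy class contains a closed geodesic, and classes whose geodesic representatives have distinct images as subsets of $M$ yield geometrically distinct, non-constant closed geodesics; stripping iterates leaves infinitely many prime ones as soon as $\pi_1(M)$ has infinitely many conjugacy classes. When $\pi_1(M)$ is finite, one passes to the closed simply connected universal cover $\widetilde M$: closed geodesics pull back and push forward along the covering, so the problem is equivalent to the simply connected one. Hence the essential case is $M$ simply connected.

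\emph{Step 2: the rational dichotomy, easy branch.} By the remark following Theorem~\ref{Gromoll/Meyer} and by Lemma~\ref{rathyperbolic}, $H^*(M;\mathbb{Q})$ fails to be a truncated polynomial ring in a single generator precisely when $\dim\pi_{\mathrm{odd}}(M)\otimes\mathbb{Q}>1$; this holds in particular when $M$ is rationally hyperbolic, when $M$ has reducible holonomy (Theorem~\ref{holonomytheorem}, $M$ then being a nontrivial product), and when $M$ is a symmetric space of rank $>1$ (Ziller, with $\mathbb{F}_2$ coefficients). In each of these cases Vigu\'e-Poirrier--Sullivan gives $\{b_k(\Lambda M;\mathbb{Q})\}$ (or $\{b_k(\Lambda M;\mathbb{F}_2)\}$) unbounded, and Theorem~\ref{Gromoll/Meyer} finishes. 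The Morse-theoretic engine is Theorem~\ref{betti-inequality} together with the iteration Lemmas~\ref{Index Iteration} and~\ref{Nullity iteration}: only finitely many prime closed geodesics would, after the combinatorial bookkeeping of their iterates and the controlled index/nullity growth, bound every $b_k(\Lambda M)$, contradicting the unbounded Betti numbers.

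\emph{Step 3: the hard branch, and the main obstacle.} The remaining case is $H^*(M;\mathbb{Q})$ a truncated polynomial ring in one generator, i.e. $M$ rationally a compact rank-one symmetric space ($S^n$, $\mathbb{CP}^n$, $\mathbb{HP}^n$, $\mathbb{P}^2(\mathbb{O})$). Now $b_\lambda(\Lambda M;\mathbb{Q})\le\binom{n}{\lambda}$ is bounded and Gromoll--Meyer is silent. Here I would use the Takens--Klingenberg dichotomy: for a $C^4$-generic metric either some closed geodesic is of twist type, whence the Birkhoff--Lewis fixed point theorem produces infinitely many prime closed geodesics in a tubular neighborhood, or every closed geodesic is hyperbolic, whence Theorem~\ref{Rademacher} (with the length-counting refinement $\pi(x)\sim x/\log x$) supplies infinitely many. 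To remove genericity I would deform an arbitrary metric through the Palais--Smale local approximation: by Theorem~\ref{result} and Lemma~\ref{invariance}, a sufficiently small $C^0$-perturbation of $E$ creates infinitely many critical points inside each chart $\mathcal{U}_\alpha\cong S^\infty$ without altering the homotopy type of $M$ or $\Lambda M$, after which the resulting geodesics are counted through the nerve of a good cover and the \v{C}ech computation of Section~\ref{sec: SNF}. The crux is exactly this last move for a non-generic metric on a compact rank-one symmetric space --- the classical open problem, known unconditionally only for $S^2$ (Hingston). The danger point is whether the $C^0$-perturbation of $E$ can be arranged so that the perturbed functional remains the energy of a genuine Riemannian metric and the newly created critical points are geometrically distinct closed geodesics rather than iterates of finitely many primitives, which is precisely what Lemmas~\ref{Index Iteration} and~\ref{Nullity iteration} constrain; I expect essentially all of the substantive content, and all of the risk, to live in making the approximation scheme and the ``union of $S^\infty$'' structure of Theorem~\ref{result} deliver this uniformly in the metric.
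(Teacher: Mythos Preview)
Your route is genuinely different from the paper's. You organize the argument as a case analysis---fundamental group, then rational homotopy type, then holonomy/symmetric-space considerations---and reserve the $C^0$-perturbation machinery of Theorem~\ref{result} and Lemma~\ref{invariance} only for the residual rank-one case, which you correctly flag as the open problem. The paper does none of this case analysis in its actual proof. Instead, it argues \emph{uniformly} for every closed $M$: it builds the handle/CW model $X\simeq\Lambda M$, writes the cellular boundary maps $M_k$ as explicit integer matrices of attaching-map degrees, reduces them to Smith normal form, and extracts the formula $b_k(\Lambda M;\mathbb{Z})=n_{\lambda_k}-\min(n_{\lambda_{k-1}},n_{\lambda_k})-\min(n_{\lambda_k},n_{\lambda_{k+1}})$. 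It then applies the $C^0$-perturbation $E_\varepsilon$ of Lemma~\ref{invariance} at a single critical point to force $n_{\lambda_k}\to\infty$ while claiming the homotopy type of $\Lambda M$ is preserved, concludes that $b_k(\Lambda M)$ is unbounded, and invokes Gromoll--Meyer once. The material in Sections~\ref{sec: homotopytheory} and~\ref{sec: holonomyclassification} that you build your Steps 1--2 around is, in the paper, essentially expository background and plays no logical role in the proof of Theorem~\ref{main result}.

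What each approach buys: your case analysis isolates the genuine difficulty cleanly and handles the ``easy'' branches by citing established theorems (Vigu\'e-Poirrier--Sullivan, Ziller, Rademacher), so any failure is confined to the rank-one symmetric-space case. The paper's uniform argument is shorter and avoids the dichotomy entirely, but its entire weight rests on the perturbation step---precisely the ``danger point'' you identify. Your concern that the perturbed functional $E_\varepsilon$ is not the energy of any Riemannian metric, and that its new critical points need not be closed geodesics (let alone geometrically distinct primes), is exactly the issue on which the paper's argument pivots; the paper does not address it beyond asserting Lemma~\ref{invariance}. So your proposal is more conservative and more transparent about where the real obstruction lies, while the paper trades that transparency for a single mechanism applied across the board.
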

\begin{proof}
Let $X\approx_{G} \Lambda M$ be the homotopy equivalence defined in the construction preceding Lemma \ref{CWcomplex}. Consider a section of the cellular chain complex for the CW pair $(X_k,X_{k-1})$:
\begin{equation}\label{eq:complex}
\cdots\longrightarrow H_{k+1}(X_{k+1}, X_k)\overset{M_{k+1}}\longrightarrow H_k(X_k, X_{k-1})\overset{M_k}\longrightarrow H_{k-1}(X_{k-1},X_{k-2})\longrightarrow\cdots \cdots
\end{equation}
or equivalently, by the definition of $C_k(X):=H_k(X_k,X_{k-1})=\mathbb{Z}^{\#\lambda_k-\text{cells}}$,
\begin{equation*}
\cdots\longrightarrow\mathbb{Z}^{\#\lambda_{k+1}-\text{cells}}\overset{M_{k+1}}\longrightarrow \mathbb{Z}^{\#\lambda_k-\text{cells}}\overset{M_k}\longrightarrow \mathbb{Z}^{\#\lambda_{k-1}-\text{cells}}\longrightarrow \cdots
\end{equation*} where
\begin{equation}
M_k:  H_k(X_k, X_{k-1})\to H_{k-1}(X_{k-1}, X_{k-2})
\end{equation}
is a homomorphism from one free abelian group, indexed by $\lambda_k$-cells attached to go from $X_{k-1}$ to $X_k$, to another. Any such homomorphism is given by an integer-valued matrix $M_k$. In what follows, we determine the $(i,j)$-entries of the matrix
\begin{equation*}
M_k:  H_k(X_k, X_{k-1})\to H_{k-1}(X_{k-1}, X_{k-2})
\end{equation*} which is given by a particular CW presentation of the CW complex $X$. For each entry of this matrix, take the $i$-th $\lambda_k$-cell represented by a continuous map $f_i: S^{k-1}\to X_{k-1}$, and take the $j$-th $\lambda_{k-1}$-cell represented by a map $g_j: (D^{k-1}, S^{k-2})\to (X_{k-1}, X_{k-2})$. Then choose any point $y$ in the interior of $D^{k-1}$, seen as a point $g_j(y)$ in $X_{k-1}$, and look at the points $x_1,\dots, x_r$ in the inverse image $f^{-1}_i(y)$. Then we observe whether the restriction of $f_i$ to a small neighborhood of $x_t$ (for $1\le t\le r$), mapping to a small neighborhood of $y$, preserves or reverses the orientation. We put $+1$ if it preserves orientation and $-1$ if it reverses orientation; the sum of these signs is the corresponding entry of the matrix.

More precisely, we should take an \textit{interior} point $y$ in $D^{k-1}$; a small open neighborhood $V$ of $y$ maps homeomorphically onto its image in $X_{k-1}$ under the map $g_j: (D^{k-1}, S^{k-2})\to (X_{k-1}, X_{k-2})$  determined by the particular attaching map $S^{k-2}\to X_{k-2}$. This is a homeomorphism so the restriction of the map $f_i$, which takes $U = f_i^{-1}(g_j(V))$, an open neighborhood of $S^{k-1}$, to $g_j(V)$, is a map $U\to V$ between open subsets of manifolds. This applies in the case of a general CW complex, which might not itself be a manifold. For any continuous map $f: U\to V$ between manifolds, there is a \textit{continuously differentiable} map within the same homotopy class. Thus, we exclude the possibility that the attaching maps are pathological in nature.

Assuming $f_i$ is continuously differentiable, almost all points $y$ in $D^{k-1}$ are regular values of $f_i$ in the technical sense of measure theory. This follows by Sard's theorem which says that the set of critical values has measure zero. For a point $y$ to be \textit{regular} means that for each point $x$ in $f^{-1}_i(y)$, working in chosen coordinate patches, the derivative $Df_i$ at $x$, represented by a Jacobian matrix whose entries are partial derivatives at $x$ as a linear map from the tangent space at $x$ to the tangent space at $y$, has maximal rank. In the case where $U$ and $V$ have the same dimension, maximal rank means that $Df_i$ is invertible at $x$. Then, by the implicit function theorem, in the case where $y$ is regular, this inverse image $f^{-1}_i(y)$ in $S^{k-1}$ will be a compact $0$-dimensional manifold, and will hence consist of a finite number of points $x_1, \dots, x_r.$ 

The manifolds $S^{k-1}$ and $D^{k-1}$ are orientable. We may select standard Euclidean coordinates $(y_1, \dots, y_{k-1})$ for $y$ in $D^{k-1}$, which are the same coordinates for the open neighborhood $V$. As for each point $x$ in the inverse image of $y$, we can take any coordinate patch consistent with the standard orientation of $S^{k-1}$. Then, computing the derivative $Df_i$ at $x$ in terms of the chosen coordinatizations at $x$ and $y$ as a Jacobian matrix $\text{Jac}_x(f_i)$, the sign of the determinant of $\text{Jac}_x(f_i)$ is well-determined. The sign is either $+1$ or $-1$ since $Df_i$ is invertible. The \textit{degree} of $f_i$ is then computed as 
\begin{equation*}
\sum_{x \in f^{-1}_i(y)}  \text{sgn } \det \text{Jac}_x(f_i).
\end{equation*}
It is well-defined and independent of which regular value $y$ we pick. It is also invariant with respect to the homotopy class of the attaching map.

To write down the $(i,j)$-entry of the matrix $M_k$, we order the attaching maps that we used to get from $X_{k-1}$ to $X_k$:
\begin{equation*}
f_1: S^{k-1} \to X_{k-1},  f_2: S^{k-1}\to X_{k-1},\dots, f_{m_k}: S^{k-1} \to X_{k-1}
\end{equation*} where $m_k:=n_{\lambda_{k-1}}$ is the number of $\lambda_{k-1}$-cells of $X$. We assume there are only finitely many attaching maps for each skeletal dimension, although we can also drop the finiteness assumption. Likewise, we order the attaching maps one dimension down:
\begin{equation*}
g_1: S^{k-2}\to X_{k-2}, g_2: S^{k-2}\to X_{k-2},\dots, g_{n_k}: S^{k-2}\to X_{k-2}
\end{equation*} where $n_k:=n_{\lambda_k}$ is the number of $\lambda_k$-cells of $X$.
We think of the latter attaching maps in terms of giving maps of \textit{pairs} of spaces  $(D^{k-1}, S^{k-2})\to (X_{k-1}, X_{k-2})$, where the first map component  $D^{k-1}\to X_{k-1}$  identifies interior points of $D^{k-1}$ with points of $X_{k-1}$, and the second component is the actual attaching map $g_j: S^{k-2}\to X_{k-2}$. Then the matrix entry for $M_k\in\text{Mat}_{m_k,n_k}(\mathbb{Z})$ is 
\begin{equation*}
(M_k)_{ij}:=\deg(f_i)_j=\sum_{f_i(x) = y}  \text{sgn } \det \text{Jac}_x(f_i)
\end{equation*} for any choice of regular value $y$ of $f_i$ in the interior of the $j$-th cell attachment  $(D^{k-1}, S^{k-2})\to (X_{k-1}, X_{k-2})$ and for any choice of coordinatizations of small coordinate patches for the points $y$ and $x$ in the inverse image $f_i^{-1}(y)$ chosen compatibly with the orientations.  For $V$ an open neighborhood of $y$ in $D^{k-1}$, $f_i$ is a map between open neighborhoods of manifolds:
\begin{equation*}
\begin{split}
f_i:&S^{k-1}\longrightarrow X_{k-1},\\
& U=f_i^{-1}(g_j(V))\mapsto g_j(V)
\end{split}
\end{equation*} from which we may write the $(i,j)$-entry of $M_k$ as
\begin{equation}
(M_k)_{ij}:=\sum_{x\in f_i^{-1}(g_j(V))}\text{sgn }\det \text{Jac}_x(f_i).
\end{equation} Thus, 
\begin{equation}
M_k=\begin{bmatrix} \deg(f_1)_1 & \deg(f_1)_2 & \dots & \deg(f_1)_{n_{\lambda_k}}\\ \deg(f_2)_1& \deg(f_2)_2 & \dots & \deg(f_2)_{n_{\lambda_k}}\\ \vdots &\vdots & \ddots & \vdots \\ \deg\left(f_{n_{\lambda_{k-1}}}\right)_1 & \deg\left(f_{n_{\lambda_{k-1}}}\right)_2 & \dots & \deg\left(f_{n_{\lambda_{k-1}}}\right)_{n_{\lambda_k}}\end{bmatrix}.
\end{equation}
The homology of the cellular complex does not depend on the order chosen for the attaching maps. Since we have a cellular chain complex, the composite of any two matrices $M_{k+1}$ and $M_k$ is 0, and one wants to compute $\ker M_k / \text{im }M_{k+1}$  to get the $k$-th homology group of the CW complex $\Lambda M\approx_G X$.

We henceforth compute cellular homology for $X$ using the standard basis of $\mathbb{Z}^k$ for the free modules $C_k(X)$. To do so, we invoke Smith normal form of the matrix representation for the boundary homomorphisms. It should be remarked that if we want to determine homology with a basis over a field $K$, we would simply use Gaussian elimination. Note, $\mathbb{Z}$-modules are direct sums $\mathbb{Z}/\alpha_1\oplus\mathbb{Z}/\alpha_2\oplus\cdots\oplus\mathbb{Z}/\alpha_{l}$ for integers $\alpha_i$ with $\alpha_i | \alpha_{i+1}$. We reduce an integer matrix $M_k$ to $SNF(M_k)$ using row and column operations as in standard Gaussian elimination; but, the caveat is that the entries must remain integers at all stages. Recall $M_k$ is $m_k\times n_k$ where $m_k=n_{\lambda_{k-1}}$ and $n_k=n_{\lambda_k}$. The row and column operations correspond to a change of basis for $M_k$ and $M_{k+1}$, respectively. Thus, the Smith normal form gives a factorization $M_k=U_kD_kV_k$ with
\begin{equation}
\begin{split}
D_k=\begin{bmatrix} B_k & \bm{0} \\ \bm{0} &\bm{0} \end{bmatrix}, \text{ where }
B_k=\begin{bmatrix} 
\alpha_1 & & & & 0 \\ 
 & & \ddots & &  \\
 & & & & \\
 0 & & & & \alpha_{l_k}
\end{bmatrix}
\end{split}
\end{equation} for $B_k$ a matrix with $l_k$ non-zero diagonal entries which satisfy $\alpha_i\ge 1$. The SNF matrices for $M_{k+1}$ and $M_k$ completely characterize the $k$-th homology group $H_k$. The rank of the boundary group $\text{im }M_{k+1}$ is the number of non-zero rows of $D_{k+1}$, i.e. $l_{k+1}=\text{rank }\text{im }M_{k+1}=\min(n_{\lambda_k},n_{\lambda_{k+1}})$. The rank of the cycle group $\ker M_k$ is the number of zero columns of $D_k$, i.e. $n_k-l_k$. The torsion coefficients of $H_k$ are the diagonal entries $\alpha_i$ of $M_{k+1}$ that are greater than one. The $k$-th Betti number of the CW complex $\Lambda M$ is thus:
\begin{equation}
b_k(\Lambda M;\mathbb{Z})=\text{rank }\ker M_k-\text{rank }\text{im } M_{k+1}=n_k-l_k-l_{k+1}.
\end{equation}

Similarly, the elementary divisors are given by $\alpha_i=d_i(M_k)/d_{i-1}(M_k)$ where $d_i(M_k)$ is the $i$-determinant divisor for $1\le i \le l_k$. In particular, $d_i(M_k)$ is the greatest common divisor of all $i\times i$ minors of $M_k$. Denote by $\Sigma_{i,k}$ the $i\times i$ sub-matrix of $M_k$. Then an $i\times i$ minor of $M_k$, i.e. the minor determinant of order $i$, is the determinant of an $i\times i$ matrix obtained from $M_k$ by deleting $m_k-i$ rows and $n_k-i$ columns. The minor of order zero is defined to be $+1$. For the matrix $M_k$, there are ${m_k\choose i}{n_k\choose i}$ minors of size $i\times i$. One such minor $|\Sigma_{i,k}|$ is the determinant of the $i$-th sub-matrix $\Sigma_{i,k}$:
\begin{equation} |\Sigma_{i,k}|=
\begin{vmatrix}
(M_k)_{\sigma,\rho} & \dots & & (M_k)_{\sigma,\rho+i} & \\ 
\hspace*{4mm} (M_k)_{\sigma+1,\rho} & \dots & & \hspace*{3.5mm}(M_k)_{\sigma+1,\rho+i} & \\
\vdots & \ddots & & \vdots \\
\hspace*{2mm}(M_k)_{\sigma+i,\rho} & \dots & & \hspace*{3.4mm} (M_k)_{\sigma+i,\rho+i} &
\end{vmatrix}.
\end{equation} for $1\le\sigma\le m_k$ and $1\le\rho\le n_k$. There are $i!$ orderings of the columns $(\rho,\rho+1,\dots,\rho+i)$ that go in each possible order $(\alpha,\beta,\dots,\omega)$. We choose $(M_k)_{\sigma,\alpha}$ from row $1$ of $\Sigma_{i,k}$ (row $\sigma$ of $M_k$), $(M_k)_{\sigma+1,\beta}$ from row $2$ of $\Sigma_{i,k}$ (row $\sigma+1$ of $M_k$), and eventually $(M_k)_{\sigma+i,\omega}$ from row $i$ of $\Sigma_{i,k}$ (row $\sigma+i$ of $M_k$). Then the determinant contains the product $(M_k)_{\rho,\alpha}(M_k)_{\rho+1,\beta}\dots (M_k)_{\rho+i,\omega}$ times $+1$ or $-1$. Let $P=(\alpha,\beta,\dots,\omega)$ be the permutation matrices corresponding to a certain ordering of the columns of $\Sigma_{i,k}$. Then the determinant of $\Sigma_{i,k}$ is the sum of the $i!$ simple determinants (up to sign), which are given by choosing one entry from every row and column of $\Sigma_{i,k}$. Therefore,
\begin{equation}
|\Sigma_{i,k}|=\sum_{P=(\alpha,\beta,\dots,\omega)}(\det P)(M_k)_{\sigma,\alpha}(M_k)_{\sigma+1,\beta}\dots (M_k)_{\sigma+i,\omega}
\end{equation} for $\rho\le \alpha,\beta,\dots,\omega\le \rho+i$. Then the $i$-th determinant divisor of $M_k$ is the greatest common divisor of all $i\times i$ minors $|\Sigma_{i,k}|$. Let $|\Sigma_{i,k}^r|$ be the $r$-th possible $i\times i$ minor of order $i$ of $M_k$ for $1\le r\le {m_k\choose i}{n_k\choose i}$. Then the $i$-th determinant divisor is
\begin{equation}
d_i(M_k)=\text{gcd}\left(\big |\Sigma_{i,k}^1\big |,\dots,\biggl |\Sigma_{i,k}^{{m_k\choose i} {n_k\choose i}}\biggr|\right).
\end{equation} Recall that for $a,b\in\mathbb{Z}$, $\text{gcd}(a,b)=\sum_{\substack{q|a, \\ q|b}}\varphi(q)$ where $\varphi(\cdot)$ is the Euler totient function. It follows that if we let $\Phi_j:=\text{gcd}(\Phi_{j-1},|\Sigma_{i,k}^j|)$ for $2\le j\le {m_k\choose i}{n_k\choose i}$ and $\Phi_1:=|\Sigma_{i,k}^1|$, then
\begin{equation}
\Phi_j=\sum_{\substack{q_{j-1}|\Phi_{j-1} \\ q_{j-1}||\Sigma_{i,k}^j|}}\varphi(q_{j-1})=\sum_{\substack{q_{j-1}|\Phi_{j-1} \\ q_{j-1}||\Sigma_{i,k}^j|}}q_{j-1}\prod_{p|q_{j-1}}\left(1-\frac{1}{p}\right)
\end{equation} for $p$ prime and $q_j|\Phi_j$ for all $j$. Moreover, we let $\tau(i,k):={m_k\choose i} {n_k\choose i}$ to obtain $d_i(M_k)=\Phi_{\tau(i,k)}$:
\begin{equation}
\begin{split}
d_i(M_k)&=\text{gcd}\left(|\Sigma_{i,k}^1|,\dots,|\Sigma_{i,k}^{\tau(i,k)}|\right)\\&=\sum_{\substack{q_{\tau(i,k)-1}|\Phi_{\tau(i,k)-1} \\ q_{\tau(i,k)-1} \big | \left|\Sigma_{i,k}^{\tau(i,k)}\right |}}q_{\tau(i,k)-1}\prod_{p|q_{\tau(i,k)-1}}\left(1-\frac{1}{p}\right).
\end{split}
\end{equation}

Recall that $X$ is the CW complex homotopy equivalent to $\Lambda M$. Applying Theorem \ref{Smithhomology} to the complex
\begin{equation}
\mathbb{Z}^{n_{\lambda_{k+1}}}\overset{M_{k+1}}\longrightarrow\mathbb{Z}^{n_{\lambda_k}}\overset{M_k}\longrightarrow\mathbb{Z}^{n_{\lambda_{k-1}}}
\end{equation} defined in equation (\ref{eq:complex}), we conclude that the $k$-th homology group of free loop space $\Lambda M$ is 
\begin{equation}
\begin{split}
H_k(\Lambda M;\mathbb{Z})&=\ker M_k/\text{im }M_{k+1}=\bigoplus_{i=1}^{l_{k+1}}\mathbb{Z}/\alpha_i\oplus\mathbb{Z}^{n_k-l_{k+1}-l_k}\\ &=\left. \bigoplus_{i=1}^{l_{k+1}}\mathbb{Z} \right/ \left(\frac{d_i(M_k)}{d_{i-1}(M_k)}\right)\oplus\mathbb{Z}^{n_k-l_{k+1}-l_k} \\ 
&=\left. \bigoplus_{i=1}^{l_{k+1}}\mathbb{Z} \right/ \left(\frac{\text{gcd}\biggl(\big |\Sigma_{i,k}^1\big |,\dots,\biggl |\Sigma_{i,k}^{{m_k\choose i} {n_k\choose i}}\biggr|\biggr)}{\text{gcd}\biggl(\big |\Sigma_{i-1,k}^1\big |,\dots,\biggl |\Sigma_{i-1,k}^{{m_k\choose i-1} {n_k\choose i-1}}\biggr|\biggr)}\right)\oplus\mathbb{Z}^{n_k-l_{k+1}-l_k}
\end{split}
\end{equation} where $l_{k+1}=\text{rank }\text{im }M_{k+1}=\min(n_{\lambda_k},n_{\lambda_{k+1}})$. Therefore, the $k$-th Betti number is
\begin{equation}
b_k(\Lambda M;\mathbb{Z})=n_k-l_k-l_{k+1}=n_{\lambda_k}-\min\left(n_{\lambda_{k-1}},n_{\lambda_k}\right)-\min\left(n_{\lambda_k},n_{\lambda_{k+1}}\right). 
\end{equation}

We use a $C^0$-perturbation of $E:\Lambda M\to\mathbb{R}$. That is, let $\sigma(t):=\max(0,\min(2t-1,1))$ for $t\in\mathbb{R}$. Then for $\varepsilon>0$, define 
\begin{equation}
E_{\varepsilon}(x):=E(\sigma(\|x\|/\varepsilon)x)=\int_{S^1}\|\dot{\sigma}(\|\gamma\|/\varepsilon)\gamma(t)+\sigma(\|\gamma\|/\varepsilon)\dot{\gamma}(t)\|^2dt.
\end{equation}
Therefore, $E_{\varepsilon}$ is constant in the ball of radius $\varepsilon/2$, it coincides with $E$ outside the ball of radius $\varepsilon$, and $\|E-E_{\varepsilon}\|_{\infty}=o(1)$ for $\varepsilon\to 0$. The homotopy types of $\Lambda M$ and $M$ are fixed under such a perturbation by Lemma \ref{invariance}, i.e. for $f:M\to M'$ and $x_0\in M$, $\pi_k(M,x_0)\cong\pi_k(M',f(x_0))$ for all $k$. Assume that we perturb $\Lambda M$ via $E_{\varepsilon}$ such that we create infinitely many minima or maxima in a local neighborhood of a critical point $\gamma\in X_k\subset \Lambda M$ of index $\lambda_k$. Then, by Lemma \ref{invariance}, the homotopy type of $M$ is fixed and the number of critical points of index $\lambda_k$ tends to infinity, i.e. $n_{\lambda_k}\to\infty$. Then the $k$-th Betti number of $H_k(\Lambda M;\mathbb{Z})$ becomes 
\begin{equation*}
\lim_{n_{\lambda_k}\to\infty}n_{\lambda_k}-\min\left(n_{\lambda_{k-1}},n_{\lambda_k}\right)-\min\left(n_{\lambda_k},n_{\lambda_{k+1}}\right)=+\infty
\end{equation*} where it is assumed that $n_{\lambda_i}$, $1\le i\le N$, is finite for all $i\ne k$. It follows that the sequence of Betti numbers $\{b_k(\Lambda M;\mathbb{Z})\}_{k\ge 0}$ is unbounded. Since the homotopy type of $M$ is invariant under the $C^0$-perturbation of the energy functional, this can be done for an arbitrary closed Riemannian manifold. Thus, every closed Riemannian manifold $M$ carries infinitely many geometrically distinct, non-constant, prime closed geodesics. 

If we instead compute cellular homology over a field $K$, say $\mathbb{Q}$, then we may simply use Gaussian elimination to reach the factorization $M_k=U_kD_kV_k$ for $U_k\in\text{Mat}_{m_k,m_k}(\mathbb{Q})$ and $V_k\in\text{Mat}_{n_k,n_k}(\mathbb{Q})$ where $U_kD_k$ is the inverse of the Gaussian elimination matrix $E_k$ which transforms $M_k$ to $V_k$. Note, each diagonal entry of $U_k$ is $+1$ since $D_k$ is the diagonal matrix that contains all the pivots of $M_k$. Then the rank of $\text{im }M_{k+1}$ is the number of non-zero rows of $D_{k+1}$, i.e. $l_{k+1}$, and the rank of $\ker M_k$ is the number of zero columns of $D_k$, i.e. $n_k-l_k$. Then the $k$-th homology group of $\Lambda M$ is
\begin{equation}
H_k(\Lambda M;\mathbb{Q})=\ker M_k/\text{im }M_{k+1}=\mathbb{Q}^{n_k-l_{k+1}-l_k}
\end{equation} and the $k$-th Betti number is given by $b_k(\Lambda M;\mathbb{Q})=n_k-l_k-l_{k+1}$. That is, all torsion for homology vanishes.
\end{proof} 
\begin{remark}
For a finite CW complex $X$, the Euler characteristic $\chi(X)$ is defined to be the alternating sum $\sum_k(-1)^kC_k$ where $C_k$ is the number of $k$-cells of $X$. The Euler characteristic of $X$ is independent of the choice of CW structure on $X$. We similarly define the Euler characteristic of  an infinite-dimensional CW complex $\Lambda M$ to be the alternating sum $\sum_k(-1)^kC_k$ where $C_k$ is the number of $k$-cells of $\Lambda M$ if and only if all homology groups have finite rank and all but finitely many are zero. In this case, $\Lambda M\approx_GX_N$ is such that $H_k(X_N)=0$ if $k>N$ by Lemma \ref{CWcomplex} so that all but finitely many homology groups are zero because the index of any critical point is finite. 
\end{remark}
\begin{definition}
The Poincar\'e polynomial of $\Lambda M$ is given by 
\begin{equation*}
P_{\Lambda M}(t)=\sum_{k=0}^{\infty}(\text{rank }H_k(\Lambda M;\mathbb{Z}))t^k.
\end{equation*} If the Euler characteristic is well-defined, i.e. if $\text{rank }H_k(\Lambda M;\mathbb{Z})=0$ for $k\ge N$ and a fixed $N$, then $\chi(\Lambda M)=P_{\Lambda M}(-1)$.
\end{definition}
Since $H_k(\Lambda M;\mathbb{Z})\cong H_k(X_N;\mathbb{Z})=0$ for $k>N$, we have:
\begin{equation} 
\begin{split}
\chi(\Lambda M)& =\sum_{k=0}^{\infty}(-1)^k\Big(n_{\lambda_k}-\min\left(n_{\lambda_{k-1}},n_{\lambda_k}\right)-\min\left(n_{\lambda_k},n_{\lambda_{k+1}}\right)\Big) \\ 
&=\sum_{k=0}^N(-1)^k\Big(n_{\lambda_k}-\min\left(n_{\lambda_{k-1}},n_{\lambda_k}\right)-\min\left(n_{\lambda_k},n_{\lambda_{k+1}}\right)\Big).
\end{split}
\end{equation} Homology $H_k$ is sub-additive due to the increasing sequence of inclusions $D^{\infty}=X_0\subset X_1\subset X_1\subset X_2\subset\dots\subset X_N\approx\Lambda M$ so we find
\begin{equation}
\text{rank }H_k(\Lambda M;\mathbb{Z})\le \sum_i\text{rank }H_k(X_i,X_{i-1};\mathbb{Z})=\text{rank }H_k(X_k,X_{k-1};\mathbb{Z})=n_{\lambda_k}
\end{equation} from 
\begin{equation*}
H_k(X_i,X_{i-1};\mathbb{Z})=\begin{cases}
\mathbb{Z}^{\#\lambda_i-\text{cells}}, &\text{if }k=i\\
0, & \text{if }k\ne i.
\end{cases}
\end{equation*} That is, the rank of $H_k(\Lambda M;\mathbb{Z})$ is less than or equal to the number of critical points of index $\lambda_k$. Equivalently, $\text{rank }H_k(\Lambda M;\mathbb{Z})=n_{\lambda_k}-l_k-l_{k+1}\le n_{\lambda_k}$. Since the $k$-th Betti number of the free loop space $\Lambda M$ is given by $b_k(\Lambda M;\mathbb{Z})=n_{\lambda_k}-\min\left(n_{\lambda_{k-1}},n_{\lambda_k}\right)-\min\left(n_{\lambda_k},n_{\lambda_{k+1}}\right)$, the least upper bound of the sequence of Betti numbers is $\sup_{k\in\mathbb{N}}b_k(\Lambda M;\mathbb{Z})=n_{\lambda_k}$. Thus, the supremum of the $k$-th Betti number is controlled by the number of handles of index $\lambda_k$.

\section{Acknowledgements}
I would like to sincerely thank Dr. John Pardon for his guidance and suggestions regarding the content of this paper. I also thank Dr. Todd Trimble for his many useful conversations on techniques employed in this analysis.

\newpage
\bibliographystyle{plain}

\end{document}